\documentclass[a4paper, 12pt]{article}
\usepackage{amsmath,amssymb,amsthm,graphicx,epstopdf,indentfirst,mathrsfs,geometry,enumitem,verbatim,tcolorbox,pifont,graphicx,enumitem,txfonts}
\usepackage[runin]{abstract}

\setenumerate[1]{itemsep=0pt,partopsep=0pt,parsep=\parskip,topsep=5pt}
\setitemize[1]{itemsep=0pt,partopsep=0pt,parsep=\parskip,topsep=5pt}
\setdescription{itemsep=0pt,partopsep=0pt,parsep=\parskip,topsep=5pt}


\setlength{\oddsidemargin}{-0.7cm}
\setlength{\textwidth}{17cm}
\setlength{\textheight}{23.5cm}
\setlength{\topmargin}{-1cm}

\setlength{\abstitleskip}{-18pt}
\abslabeldelim{\ \ }
\setlength{\absleftindent}{0pt}
\setlength{\absrightindent}{0pt}

\newtheorem{theorem}{Theorem}[section]

\newtheorem{lemma}[theorem]{Lemma}

\newtheorem{remark}[theorem]{Remark}

\newtheorem{proposition}[theorem]{Proposition}

\newenvironment{e*}{\begin{equation*}}{\end{equation*}}


%
\def\slashii#1{\setbox0=\hbox{$#1$}             
	\dimen0=\wd0                                 
	\setbox1=\hbox{\sl/} \dimen1=\wd1            
	\ifdim\dimen0>\dimen1                        
	\rlap{\hbox to \dimen0{\hfil\sl/\hfil}}   
	#1                                        
	\else                                        
	\rlap{\hbox to \dimen1{\hfil$#1$\hfil}}   
	\hbox{\sl/}                               
	\fi}                                         %
%
\def\slashiii#1{\setbox0=\hbox{$#1$}#1\hskip-\wd0\hbox to\wd0{\hss\sl/\/\hss}}
%


\begin{document}
	\pagestyle{plain}
	
	\title{On the super-Liouville equations on the sphere}
	
	\author{\small{Mingyang Han}\\
		\small{School of Mathematical Sciences, Shanghai Jiao Tong University}\\
		\small{Shanghai, 200240, China}\\
		\small{hanmingyang@sjtu.edu.cn}\\\\
		\small{Chunqin Zhou}
		\footnote{The second author was partially supported by NSFC Grant 12031012, NSFC Grant 12571223 and  STCSM Grant 24ZR1440700.}
		\\
		\small{School of Mathematical Sciences, Shanghai Jiao Tong University}\\
		\small{Shanghai, 200240, China}\\
		\small{cqzhou@sjtu.edu.cn}}

	\date{}
	\maketitle
	
	\begin{abstract}
		In this paper, we investigate the existence of nontrivial least-energy solutions for the super-Liouville equation with positive coefficient functions on the two-dimensional sphere. Firstly, we derive a global Pohozaev-type identity by analyzing the behavior of solutions under conformal transformations, which generalizes the classical Kazdan-Warner obstruction for the two-dimensional Nirenberg problem. Secondly, by exploiting conformal symmetry, we establish a pointwise estimate that bounds the norm of the spinor component by the scalar component, and show that the $H^1 \times H^{1/2}$ energy of the spinor part remains uniformly bounded. As a byproduct of our analysis, parallel techniques are applied to the Dirac-Einstein equations on the 3-sphere, demonstrating that nontrivial solutions are uniformly bounded away from the trivial solution in the $H^1 \times H^{1/2}$ norm. Moreover, the compactness of the solution space is also analyzed from two perspectives: in the low-energy regime, and modulo the action of the M\"obius group. Finally, by introducing a new natural constraint $\mathcal{A}$ and employing variational methods, we obtain a supersymmetric generalization of the Moser–Trudinger–Onofri inequality and  establish the existence of least-energy solutions for even coefficient functions. In particular, these solutions are shown to be nontrivial provided that a certain spectral parameter associated with the coefficients satisfies $\lambda_1(h_2, h_1) < 1$. Concurrently, we provide a complete classification of nontrivial least-energy solutions in the case of positive constant coefficients.
	\end{abstract}
	
	{\bf Keywords:}  Super-Liouville equation; Dirac operator; conformal invariance; Kazdan–Warner obstruction; supersymmetric functional inequalities; least-energy solutions; sharp \(H^{1/2}\)-Sobolev inequality; compactness; classification
	
	
	\section{Introduction}

	The following two-dimensional Liouville-type equation
	$$\Delta u + K{e^{2u}} = 0$$
	which frequently appears in mathematics and physics, can be traced back to Liouville and Monge  \cite{Cai_xiaohan,liouville1853equation}. On surfaces, it usually takes the form
	\begin{equation*}
		- \Delta u = {K_1}{e^{2u}} - {K_2}
	\end{equation*}	
	which arises in various problems, such as solving mean field equations or the problem of prescribed conformal Gaussian curvature on surfaces. Foremost among these classical endeavors is the celebrated two-dimensional Nirenberg problem: given a function $K$ on the standard sphere $(\mathbb{S}^2,g_0)$, one seeks a metric within the pointwise conformal equivalence class of the metric $g_0$ (i.e., $g = e^{2u}g_0$, where $u$ is a smooth function on the sphere), such that the curvature related to $g$ is the given function $K$. Analytically, the Nirenberg problem is equivalent to finding a solution to the following equation on the sphere $\left(\mathbb{S}^2,g_0 \right) $
	\begin{equation}\label{Liouville}
		- \Delta_{g_0} u = K{e^{2u}} - 1,
	\end{equation}
	where $	 \Delta_{g_0} $ is the Laplace-Beltrami operator. This problem has been a focal point of geometric analysis, as seen in \cite{MR1417436,MR1261723,MR1131392,changMR0908146,chang1988conformal,MR896027,MR1338474,MR339258}. As is well known, equation (\ref{Liouville}) is not solvable for some functions $K$. More specifically,  Kazdan and Warner first proposed the following identity
	\begin{equation}\label{Kaz-War}
		\int_{{\mathbb{S}^2}} {\left\langle {\nabla K,\nabla {x_j}} \right\rangle {e^{2u}}{\text{d}}{v_{g_0}}  = 0,\  j = 1,2,3,}
	\end{equation}	
	where ${x_j},j = 1,2,3$ are the coordinate functions of the standard embedding of the sphere into three-dimensional Euclidean space. A solution $u$ of equation (\ref{Liouville}) needs to satisfy identity (\ref{Kaz-War}). However, for certain functions $K$, such as the coordinate functions $x_j$, this identity (\ref{Kaz-War}) is not satisfied. Later, Bourguignon and Ezin \cite{MR882712} generalized equation (\ref{Kaz-War}) to
	\begin{equation*}
		\int_{{\mathbb{S}^2}} {X\left( K \right){e^{2u}}} {\text{d}}{v_{g_0}} = 0,
	\end{equation*}	
	where $X$ is any conformal vector field on $\mathbb{S}^2$. In particular, Futaki \cite{MR718940} extended identity (\ref{Kaz-War}) to a complex variable version.
	
	For the Nirenberg problem, The seminal work of Moser \cite{MR339258} provided an existence result that if the function $K$ on the sphere is even, i.e.
	\begin{equation*}
		K(-x) = K(x),\ \forall x\in \mathbb{S}^2,
	\end{equation*}
	then equation (\ref{Liouville}) has a solution. Moser used the maximization method for the functional
	\begin{equation*}
		F\left( u \right) = \log \left( {\frac{1}{{4\pi }}\int_{{\mathbb{S}^2}} {K{e^{2u}}} {\text{d}}{v_{g_0}}} \right) - \frac{1}{{4\pi }}\int_{{\mathbb{S}^2}} {{{\left| {\nabla u} \right|}^2}} {\text{d}}{v_{g_0}} - \frac{1}{{2\pi }}\int_{{\mathbb{S}^2}} u {\text{d}}{v_{g_0}},
	\end{equation*}
	where the crucial technique relied on the sharp constant of the famous Moser-Trudinger inequality. Chang and Yang \cite{chang1988conformal}, Hong \cite{MR845999}, Chen and Ding\cite{MR896027}, Xu and Yang \cite{MR1087058} and other authors provided other solvability conditions on functions $K$ with certain symmetries. Without any symmetry assumptions on functions $K$, Chang and Yang \cite{changMR0908146} first achieved a breakthrough using the minimax method and the blow-up technique. Subsequently, numerous studies have been conducted on this problem, yielding new results. In a parallel development, Struwe \cite{MR2137948} initiated the flow approach to this problem, while Han \cite{MR1084455} and Chang and Liu \cite{chang1993nirenberg} employed the method of Morse theory with boundary to arrive at the same results as Chang and Yang \cite{changMR0908146}. The prescribed conformal curvature problem has also been extensively studied on surfaces $\Sigma$ with $\chi(\Sigma) \le 0$, which differs significantly from the spherical case ($\chi(\mathbb{S}^2) > 0$); see \cite{MR295261,MR1937411,MR375153,kazdan1974curvature,MR2137948}.
	
	In physics, the Liouville equation serves as a significant example in two-dimensional integrable models, conformal field theories and quantum gravity. Especially in Liouville conformal field theory (LCFT), conformal bootstrap and the DOZZ formula provide a method for the exact computation of this model, thereby deepening the understanding of the Liouville theory and its applications in mathematical physics. On a Riemann surface \((\Sigma,g)\) with complex coordinates \((z, \bar{z})\), the known Lagrangian action of the LCFT is defined as follows:
	\[
	S_{L}[\phi] = \int_{\Sigma} \mathrm{d}^{2}z \sqrt{g} \left( \frac{1}{4\pi} g^{\alpha\beta} \partial_{\alpha}\phi \partial_{\beta}\phi + 2QK  \phi + \mu e^{2b\phi} \right).
	\]	
	Here, \( b \) is the coupling constant, \( Q \) is the background charge, \(\mu\) is referred to as the Liouville cosmological constant, and \(K\) is the Gaussian curvature defined on the surface \(\Sigma\). Its historical development spans multiple fields, including quantum field theory, string theory and stochastic geometry. For detailed information, see \cite{alday2010liouville,MR757857,dorn1994two,MR4816634,MR1171758,MR4060417,MR623209,polyakov1987gauge,teschner2001liouville,MR1413469}. In particular, for the physical background of the Liouville theory on the Riemann sphere, see \cite{david2016liouville}. In 1981, Polyakov \cite{polyakov1981quantum}, following his work on the bosonic string in quantum theory of the two-dimensional Liouville lagrangian, introduced a supersymmetric generalization for the fermionic string. This involved the supersymmetric Liouville theory, which effectively sums over fermionic surfaces. The corresponding Lagrangian action with trivial topology is given by
	\[
	W =  - \frac{D}{{8\pi }}\int {\left[ {\frac{1}{2}{{({\partial _i}\phi)}^2} + \frac{1}{2}i\overline \psi  \slashiii{D}\psi  + \frac{1}{2}{\mu ^2}{e^{2\phi}} + \frac{1}{2}\mu {e^\phi}(\overline \psi  {\gamma _5}\psi )} \right]}
	\]
	where \( D \) is the spacetime dimension, \( \phi \) is a scalar superfield, \( \partial_i \) denotes the spacetime derivative, \( \psi \) is a spinor, $\slashiii{D}$ is the Dirac operator, \( \mu \) is a mass scale, and \( \gamma_5 \) is a gamma matrix. For more articles on the supersymmetric Liouville theory in physics, see \cite{MR1372587,MR1028444,fukuda2002super,MR1608487,strominger1998black}. Based on Polyakov's work, Jost, Wang and Zhou \cite{jost2007super} conducted research from a mathematical perspective and extended the supersymmetric Liouville theory to topologically nontrivial Riemann surfaces. They investigated the following functional on a Riemann surface $(\Sigma,g)$:
	$$
	E_0(u,\psi ) = \int_\Sigma {\left\{ {\frac{1}{2}{{\left| {\nabla u} \right|}^2} + {K_g}u + \left\langle { \slashiii{D}\psi ,\psi } \right\rangle  - {e^{2u}} + {e^u}{{\left| \psi  \right|}^2}} \right\}} {\text{d}}{v_g}.
	$$
	The Euler-Lagrange equation associated with the functional of the supersymmetric Liouville theory is known as the super-Liouville equation and typically takes the following form
	\begin{equation}\label{prev-sup-Liou}
		\begin{cases}
			{ - \Delta_g u =h_1{e^{2u}}- {K_g}+h_2 {e^{u}}{{\left| \psi    \right|^2}}}  ,&{{\text{on}}\;{\Sigma},} \\
			{\slashiii{D}_g\psi    = h_2{e^{u}}\psi  },&{{\text{on}}\;{\Sigma},}
		\end{cases}
	\end{equation}
	where $\Sigma$ is a closed Riemann surface, $K_g$ is the curvature of the metric $g$, $\slashiii{D}_g$ is the Dirac operator, and $\psi$ is a spinor, see Section 2 for details. When both $h_1$ and $h_2$ are constants, the equation is conformally invariant under $\widetilde{g}=e^{2v}g$. Specifically, by setting
	$$
	\begin{cases}
		{\tilde u = u - v,} \\
		{\tilde \psi  = {e^{-\frac{v}{2}}}\psi ,}
	\end{cases}
	$$
	equation (\ref{prev-sup-Liou}) becomes
	\begin{equation*}
		\begin{cases}
			{ - {\Delta _{\tilde g}}\tilde u = {h_1}{e^{2\tilde u}} - {K_{\tilde g}}+ {h_2}{e^{\tilde u}}{{\left| {\tilde \psi} \right|}^2} ,} &{{\text{on}}\;{\Sigma},} \\
			{{\slashiii{D}_{\tilde g}}\tilde \psi  = {h_2}{e^{\tilde u}}\tilde \psi ,} &{{\text{on}}\;{\Sigma}.}
		\end{cases}
	\end{equation*}	
	See \cite{jost2007super} for more details.
	
	When ${h_1} \equiv 2$ and ${h_2}  \equiv  - 1$, Jost et al. \cite{jost2007super,jost2009energy} investigated the geometric and analytic properties of equation (\ref{prev-sup-Liou}), including the regularities and the blow-up phenomenons of weak solutions. In particular, when \( \Sigma \) is a sphere, the authors demonstrated that the blow-up points are confined to at most a single point and obtained the precise blow-up value of \( 4\pi \).
	
	When $h_1 \equiv1$ and $h_2 \equiv \rho>0$ is a parameter, Jevnikar et al. \cite{MR4206467} studied the existence of solutions on the sphere and obtained perturbation results by using bifurcation method and Morse theory in variational method.
	
	When $h_1 \equiv-1$, $h_2 \equiv \rho\in\mathbb{R}$ and the genus $\gamma$ of the surface $M$ is greater than 1,  Jevnikar et al. \cite{jevnikar2020existence1} used the mountain pass theorem and the linking theorem to show that equation (\ref{prev-sup-Liou}) has nontrivial solutions (which refers to solutions satisfying $\psi\not \equiv0$) provided that $\rho$ does not belong to the spectrum of the Dirac operator $\slashiii{D}$.
	
	When $h_1$ or $h_2$ in equation (\ref{prev-sup-Liou}) is not a constant, the equation is not conformally invariant. In this case, unlike the prescribed Gaussian curvature problem, there are relatively few results on the existence of nontrivial solutions. In a previous work \cite{han2024existence}, we established the existence of nontrivial solutions on closed surfaces with genus $\gamma > 1$, under the conditions that $h_1 < 0$ and $h_2 \equiv \rho \in \mathbb{R}$, provided that $\rho$ does not belong to the spectrum of the weighted Dirac operator.
	
	For the research on super-Liouville equations on more generally closed surfaces and surfaces with nonempty boundary, see \cite{jevnikar2020existence1,MR4206467,jost2007super,jost2009energy,jost2014boundary,jost2014qualitative,jost2015local}. 	 The generalization of the super-Liouville equation to higher dimensions (\(n \geq 3\)) is known as the Dirac-Einstein equation, which has been the subject of many studies. For more details, we refer to \cite{MR4267625,MR4505161,MR1709232,MR1793008,MR1738150,MR3906258,MR4844577,MR4707830}.
	
	In this paper, we focus on the existence of solutions to the following equation on the standard sphere $(\mathbb{S}^2,g_0)$
	\begin{equation}\label{key-Sphere}
		\begin{cases}
			{ - \Delta u = {h_1}(x){e^{2u}} - 1 + {h_2(x)}{e^{u}}{{\left| \psi  \right|}^2},}&{{\text{on}}\;{\mathbb{S}^2},} \\
			{\slashiii{D}\psi  = h_2(x){e^{u}}\psi ,}&{{\text{on}}\;{\mathbb{S}^2}.}
		\end{cases}
	\end{equation}
	Here, $\Delta$, $\slashiii{D}$, and \( \left|  \cdot  \right| \) are all considered with respect to the standard metric \( g_0 \) on $\mathbb{S}^2$ and $h_i(x),i=1,2$ are nonnegative functions. In particular, to simplify the proof, we consider in this paper the case where \( u \) is a real-valued function and \( \psi \) is a real spinor. For convenience, unless otherwise specified, we write \( {\text{d}{v}} \) for \({\text{d}}{v_{{g_0}}} \), the volume form associated with the standard metric on the sphere. The minimum and maximum values of the functions $h_i,i=1,2$ are denoted by $h_{i,\min}$ and $h_{i,\max}$, respectively.
	
	A further motivation for considering such sign choices comes from recent developments in timelike \(N=1\) super-Liouville theory. M\"uhlmann et al. \cite{muhlmann2026three} derived the three-point structure constants of timelike \(N=1\) Liouville conformal field theory and showed that, in a suitable normalization, they are expressed by inverse-type formulae in terms of the corresponding spacelike structure constants. They also showed that, at degenerate values of the momenta, these constants reproduce the structure constants of \(N=1\) superconformal minimal models. Although the present paper is concerned with a classical geometric PDE system rather than with the quantum bootstrap construction, the constant-coefficient equation studied here may be regarded, at least formally, as a conformally invariant super-Liouville-type model with a timelike sign structure.
	
	Recall that for the prescribed Gaussian curvature equation (\ref{Liouville}) on the two-dimensional sphere, there exists a Pohozaev-type obstruction known as the Kazdan Warner identity (\ref{Kaz-War}). For the Dirac equation
	\[
	\slashiii{D}\psi = V(x)\psi,\quad \text{on}\ {\mathbb{S}^2} ,
	\]
	with a smooth function $V$, a similar argument, an argument analogous to the proof of Lemma 4.3 by Jevnikar et al. \cite{MR4206467} readily yields
	\[
	\int_{{\mathbb{S}^2}} V \nabla |\psi|^2 \nabla x_i \, {\text{d}}{v} = \int_{{\mathbb{S}^2}} V |\psi|^2x_i  \, {\text{d}}{v} = \int_{{\mathbb{S}^2}} \langle \nabla V, \nabla x_i \rangle |\psi|^2 \, {\text{d}}{v},\quad i=1,2,3.
	\]
	Consequently, for equation (\ref{key-Sphere}), we obtain the following Pohozaev-type identity.
	\begin{proposition}\label{Kazdan-w}
		If $h_1(x)$ and $h_2(x)$ are two smooth functions on ${\mathbb{S}^2}$ and $(u,\psi)$ is a smooth solution to equation (\ref{key-Sphere}), then
		\[\int_{{\mathbb{S}^2}} {\langle \nabla {h_1},\nabla {x_i}\rangle } {e^{2u}}{\mkern 1mu} {\text{d}}v + 2\int_{{\mathbb{S}^2}} {\langle \nabla {h_2},\nabla {x_i}\rangle } {e^u}|\psi {|^2}{\mkern 1mu} {\text{d}}v = 0,\quad i = 1,2,3.\]
	\end{proposition}
	When dealing with equation (\ref{key-Sphere}), two fundamental difficulties arise. The first is analogous to that encountered in the Nirenberg problem and is related to the non-compactness of the conformal symmetry group of the sphere. The second difficulty arises from the nature of the Dirac operator. The second difficulty arises from the strongly indefinite nature of the Dirac operator, whose algebraic structure limits straightforward coordinate expressions. For the Dirac operator, a particularly important formula is the Lichnerowicz formula (\ref{Lichnerowicz}), see Section 2. Inspired by the proof of Lemma 3.1 by Maalaoui and Martino \cite{MR4423145} and by exploring conformal transformations with the Lichnerowicz formula for the super-Liouville equation on an arbitrary closed surfaces $(M,g)$, we obtain that the spinor part \(\psi\) of the solution can be controlled by the scalar function \(u\). Specifically, for the following equation
	\begin{equation}\label{general}
		\begin{cases}
			{ - {\Delta _g}u = h_1(x){e^{2u}} - {K_g} + h_2(x){e^u}{{\left| \psi  \right|}^2},}&{{\text{on}}\;{M},} \\
			{{\slashiii{D}_g}\psi  = h_2(x){e^u}\psi ,}&{{\text{on}}\;{M}.}
		\end{cases}
	\end{equation}
	where \( K_g \) is the Gaussian curvature of the surface \(({M}, g)\), we have the following result.
	\begin{theorem}\label{thm-principle}
		Assume that \( (M,g) \) is a closed Riemann surface with its Gauss curvature $K_g$ and $h_1(x)$ and $h_2(x)$ are two smooth functions on $M$. In addition, assume that \( h_2 > 0 \) and \( h_1 \leq 2h_2^2 \). If $(u,\psi)\in H^1(M)\times H^{1/2}( \mathcal{S}M )$ is a weak solution of (\ref{general}), then there is a constant $C=\mathop {\max }\limits_{x \in {\mathbb{S}^2}} \left\{ {\left( {2h_2^2 - {h_1}} \right)/{h_2}} \right\}>0$ such that	\[{\left| \psi  \right|} \leq C{e^{u/2}}.\]
	\end{theorem}
	\begin{remark}
		As a direct consequence of the estimate in Theorem \ref{thm-principle}, any blow-up point of \(\psi\) must also be a blow-up point of \(u\), i.e., the blow-up set of \(\psi\) is contained in that of \(u\).
	\end{remark}
	On the other hand, on a sphere $(\mathbb{S}^2,g)$, the strongly indefinite nature of the Dirac operator may also provide some additional information. For instance, by exploiting the strongly indefinite nature, we derive the first result, which concerns a priori estimate for the spinor component of the solution. Specifically, we show that the Sobolev norm of spinors, see Section \ref{Sobolev-spin}, of all solutions is bounded.
\begin{theorem}\label{thm1}
	Assume that $M=(\mathbb{S}^2,g)$ with \( K_g \geq 0 \). If $h_1(x)>0$ and $h_2(x)\geq 0$ are two smooth functions on $\mathbb{S}^2$ and $(u,\psi)\in H^1(\mathbb{S}^2)\times H^{1/2}( \mathcal{S}\mathbb{S}^2 )$ is a weak solution of (\ref{general}), then there is a constant $C=C(h_1,h_2)>0$ such that	$\left\| \psi  \right\|_{{H^{1/2}}\left( \mathcal{S}\mathbb{S}^2 \right)} \leq C.$
\end{theorem}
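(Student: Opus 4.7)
The plan is to extract two integral bounds from the scalar equation of (\ref{general}) and combine them with the Dirac equation tested against a spectrally rearranged spinor to close an estimate on $\|\psi\|_{H^{1/2}}$.

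Integrating the first equation over $\mathbb{S}^2$ and applying Gauss-Bonnet gives
\begin{equation*}
\int_{\mathbb{S}^2} h_1 e^{2u}\,dv_g + \int_{\mathbb{S}^2} h_2 e^u|\psi|^2\,dv_g = \int_{\mathbb{S}^2} K_g\,dv_g = 4\pi.
\end{equation*}
Both summands on the left are nonnegative, so each is at most $4\pi$. Since $h_1>0$ is continuous on the compact sphere, $\min h_1>0$, whence $\int e^{2u}\leq 4\pi/\min h_1$; combined with the boundedness of $h_2$, this yields $\|h_2 e^u\|_{L^2}\leq C$. The second bound reads $\int h_2 e^u|\psi|^2\leq 4\pi$.

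Under the hypothesis $K_g\geq 0$ with strict inequality on a set of positive measure, the two-dimensional Lichnerowicz identity $\slashiii{D}_g^{\,2}=\nabla^{*}\nabla+K_g/2$ forces $\ker\slashiii{D}_g=\{0\}$, so its $L^{2}$-spectrum consists of eigenvalues bounded away from zero. Let $P_\pm$ be the spectral projections onto the positive and negative eigenspaces and set $\phi:=P_+\psi-P_-\psi$; then $\|\phi\|_{H^{1/2}}=\|\psi\|_{H^{1/2}}$. Testing the Dirac equation against $\phi$ gives
\begin{equation*}
\int_{\mathbb{S}^2}\langle|\slashiii{D}_g|\psi,\psi\rangle\,dv_g=\int_{\mathbb{S}^2}\langle\slashiii{D}_g\psi,\phi\rangle\,dv_g=\int_{\mathbb{S}^2}h_2 e^u\langle\psi,\phi\rangle\,dv_g,
\end{equation*}
and by the spectral gap the left-hand side is comparable to $\|\psi\|_{H^{1/2}}^2$. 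To control the right-hand side I H\"older-interpolate between the two bounds just obtained: writing $(h_2 e^u)^{4/3}|\psi|^{4/3}=(h_2 e^u|\psi|^2)^{2/3}(h_2 e^u)^{2/3}$ and using conjugate exponents $3/2,3$,
\begin{equation*}
\|h_2 e^u\psi\|_{L^{4/3}}\leq\Big(\int h_2 e^u|\psi|^2\Big)^{1/2}\Big(\int(h_2 e^u)^2\Big)^{1/4}\leq C.
\end{equation*}
The Sobolev embedding $H^{1/2}(\mathcal{S}\mathbb{S}^2)\hookrightarrow L^4$ and $\|\phi\|_{H^{1/2}}=\|\psi\|_{H^{1/2}}$ then bound the right-hand side of the previous display by $C\|\psi\|_{H^{1/2}}$, closing the estimate to $\|\psi\|_{H^{1/2}}^2\leq C\|\psi\|_{H^{1/2}}$.

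The principal obstacle is the strongly indefinite character of $\slashiii{D}_g$: the quadratic form $\int\langle\slashiii{D}_g\psi,\psi\rangle$ does not control $\|\psi\|_{H^{1/2}}$, and the spectral rearrangement $\phi=P_+\psi-P_-\psi$ is exactly the device that replaces $\slashiii{D}_g$ by $|\slashiii{D}_g|$ in the test identity, which is precisely the "strongly indefinite nature" highlighted in the introduction. A secondary technical point is the integrability of $h_1 e^{2u}$ and $h_2 e^u|\psi|^2$ for weak $(u,\psi)\in H^1\times H^{1/2}$, which follows from Moser-Trudinger and $H^{1/2}\hookrightarrow L^4$.
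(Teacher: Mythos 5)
Your proof is correct, and it uses the same core ingredients as the paper's: integrate the scalar equation with Gauss--Bonnet to obtain $\int h_1 e^{2u}+\int h_2 e^u|\psi|^2=4\pi$, use the absence of harmonic spinors (Lichnerowicz under the curvature hypothesis) so that the spectral decomposition controls the $H^{1/2}$ norm, and close the estimate via H\"older and the embedding $H^{1/2}\hookrightarrow L^4$. The difference is organizational but worth noting: you test the Dirac equation against the single spinor $\phi=P_+\psi-P_-\psi$, which by orthogonality of the spectral subspaces turns $\int\langle\slashiii{D}\psi,\phi\rangle$ into $\int\langle|\slashiii{D}|\psi,\psi\rangle=\||\slashiii{D}|^{1/2}\psi\|_{L^2}^2$, hence controls $\|\psi\|_{H^{1/2}}^2$ in one shot. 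The paper instead runs a two-step argument: testing against $\psi$ yields an inequality relating $\|\psi^+\|_{H^{1/2}}$ and $\|\psi^-\|_{H^{1/2}}$, and then testing against $\psi^+$ alone bounds $\|\psi^+\|_{H^{1/2}}$, after which the first inequality bounds $\|\psi^-\|_{H^{1/2}}$. Your one-step version is a bit cleaner and avoids the explicit constants $A_i,B_i$; the underlying H\"older split is the same as the paper's three-factor estimate, just regrouped as $\|h_2 e^u\psi\|_{L^{4/3}}\|\phi\|_{L^4}$. One presentational remark: like the paper, you implicitly pass from a weak $H^{1/2}$ solution to testing against $\phi\in H^{1/2}$; this is legitimate because the bilinear form $\int\langle\slashiii{D}\cdot,\cdot\rangle$ extends continuously to $H^{1/2}\times H^{1/2}$ (or one may appeal, as the paper does, to the regularity of weak solutions), but it would be worth one sentence to say so.
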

\begin{remark}
	By applying similar techniques to the three-dimensional Dirac-Einstein equation on the sphere, we obtain a series of results exhibiting distinctive features that have not been previously documented in the literature. We consider the following coupled system on the three-dimensional sphere $(\mathbb{S}^3, g)$:
	\begin{equation}\label{DE}
		\begin{cases}
			{ - {\Delta _g}u + \frac{{{R_g}}}{8}u = Ku{{\left| \psi  \right|}^2},}&{{\text{on}}\;{\mathbb{S}^3},} \\
			{{\slashiii{D}_g}\psi  = K{u^2}\psi ,}&{{\text{on}}\;{\mathbb{S}^3},}
		\end{cases}
	\end{equation}
	where $u \in H^1(\mathbb{S}^3)$ is a positive scalar field, $\psi \in H^{1/2}(\Sigma\mathbb{S}^3)$ is a spinor field, and $K$ is a positive function. For further background on this equation, see \cite{MR4234090}. We recall that on a 3-manifold, the Sobolev critical exponent for the fractional space $H^{1/2}$ is $2^* = 3$.  Because the sphere $\mathbb{S}^3$ carries no harmonic spinors, it is clear that system \eqref{DE} admits no semi-trivial solutions of the form $(u, 0)$ or $(0, \psi)$. Then we have the following theorem.
	\begin{theorem}\label{DE-th}
		Assume that the scalar curvature \(R_g\) and the function \(K\) are both positive and smooth, then there exists a constant $C=C\left(R_g,K \right)>0 $ such that any solution $\left( u, \psi\right) \not \equiv\left(0,0 \right) $ to the equation \eqref{DE} satisfies the uniform lower bound
		$$
		\|u\|_{H^1} \geq C,\quad {\left\| \psi  \right\|_{{H^{1/2}}}} \geq C,
		$$
		and
		$$\frac{C}{\|u\|_{H^1}} \leq \|\psi\|_{{H^{1/2}}} \leq C \|u\|_{H^1}^2.$$
	\end{theorem}
\end{remark}

With the help of the above uniform estimate for the super-Liouville equations, we can study the compactness of solutions. For this purpose, we assume that ${h_{i,n}} \to h_i$ in $C^1({\mathbb{S}^2}),\ i=1,2$, and $(u_n,\psi_n)\in H^1(\mathbb{S}^2)\times H^{1/2}( \mathcal{S}\mathbb{S}^2 )$ satisfy
\begin{equation}\label{jinsi}
	\begin{cases}
		{ - \Delta u_n = {h_{1,n}}{e^{2u_n}} - 1 + {h_{2,n}}{e^{u_n }}{{\left| \psi_n   \right|}^2},}&{{\text{on}}\;{\mathbb{S}^2},} \\
		{\slashiii{D}\psi_n   ={h_{2,n}}{e^{u_n}}\psi_n  ,}&{{\text{on}}\;{\mathbb{S}^2}.}
	\end{cases}
\end{equation}
Then, we will show that the solutions are uniformly bounded under the small energy condition.
\begin{theorem}\label{thm1.3}
	Assume that \((u_n, \psi_n) \in H^1(\mathbb{S}^2)\times H^{1/2}( \mathcal{S}\mathbb{S}^2 )\) is a sequence of weak solutions for (\ref{jinsi}) with
	\begin{equation}\label{coefficient-bound}
		0 < m_i \leq {h_{i,n}}\left( x \right) \leq M_i,\quad i=1,2
	\end{equation}
	and
	\begin{equation}\label{concentrate}
		\mathop {\lim }\limits_{r \to 0} \mathop {\lim }\limits_{n \to \infty } \int_{{B_r}(x)} {\left( {{h_{1,n}}{e^{2{u_n}}} +{h_{2,n}} {e^{{u_n}}}{{\left| \psi_n  \right|}^2}} \right)} {\mkern 1mu} \mathrm{d}{v_{{g_0}}} < 2\pi .
	\end{equation}
	Then \(\left(  u_n,\psi_n \right) \) is bounded in  \( C^k(\mathbb{S}^2)\times {C^k}\left( \mathcal{S}\mathbb{S}^2 \right) ,\forall k>0\).
\end{theorem}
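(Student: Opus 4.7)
The plan is to first use Theorem~\ref{thm1} to bound the spinor component in $H^{1/2}$, then upgrade the local integrability of $e^{u_n}$ via a Br\'ezis--Merle estimate that exploits the $2\pi$-threshold in (\ref{concentrate}), and finally bootstrap by elliptic regularity. Applying Theorem~\ref{thm1} to each $(u_n,\psi_n)$ and using the $C^1$-convergence $h_{2,n}\to h_2$ yields a uniform bound $\|\psi_n\|_{H^{1/2}(\mathcal{S}\mathbb{S}^2)}\le C$, and the Sobolev embedding $H^{1/2}\hookrightarrow L^4$ gives $\||\psi_n|^2\|_{L^2(\mathbb{S}^2)}\le C$ uniformly. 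Integrating the scalar equation in (\ref{jinsi}) over $\mathbb{S}^2$ and using (\ref{coefficient-bound}) produces
\[
\int_{\mathbb{S}^2}e^{2u_n}\,\mathrm{d}v_{g_0}\le\frac{4\pi}{m_1},\qquad \int_{\mathbb{S}^2}e^{u_n}|\psi_n|^2\,\mathrm{d}v_{g_0}\le\frac{4\pi}{m_2},
\]
and Cauchy--Schwarz gives $\int_{\mathbb{S}^2}e^{u_n}\,\mathrm{d}v_{g_0}\le C$.

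Using (\ref{concentrate}) and compactness of $\mathbb{S}^2$, I would choose $r>0$ small and cover $\mathbb{S}^2$ by finitely many balls $B_r(x_j)$ in each of which, in an isothermal chart,
\[
\|F_n\|_{L^1(B_{2r}(x_j))}\le 2\pi-\delta
\]
holds uniformly in $n$ for some $\delta>0$, where $F_n:=h_{1,n}e^{2u_n}+h_{2,n}e^{u_n}|\psi_n|^2-1$ (the $-1$ contributes $O(r^2)$, absorbed by shrinking $r$). Decompose $u_n=v_n+w_n$ where $v_n$ solves $-\Delta v_n=F_n$ in $B_{2r}(x_j)$ with $v_n|_{\partial B_{2r}}=0$, and $w_n:=u_n-v_n$ is harmonic on $B_{2r}(x_j)$. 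The Br\'ezis--Merle inequality then furnishes $p>2$ and a uniform bound $\int_{B_{2r}(x_j)}e^{pv_n}\le C_p$. Because $F_n\ge-1$, $v_n^-$ is uniformly bounded by elliptic theory, so $w_n^+\le u_n^++\|v_n^-\|_\infty$; the subharmonic mean-value property together with $\int_{B_{2r}} u_n^+\le\int_{\mathbb{S}^2} e^{u_n}\le C$ then yields a uniform $L^\infty(B_r(x_j))$ bound on $w_n^+$. Consequently $e^{u_n}$ is uniformly bounded in $L^p(B_r(x_j))$ for some $p>2$.

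After a finite covering, $h_{1,n}e^{2u_n}$ is uniformly bounded in $L^{p/2}(\mathbb{S}^2)$ with $p/2>1$, and H\"older combined with $|\psi_n|^2\in L^2$ gives $h_{2,n}e^{u_n}|\psi_n|^2\in L^{q'}(\mathbb{S}^2)$ for some $q'>1$. Standard $L^{q'}$-elliptic regularity then gives a uniform bound $\|u_n\|_{W^{2,q'}(\mathbb{S}^2)}\le C$, which Sobolev-embeds into $C^0(\mathbb{S}^2)$; with $\|u_n\|_\infty\le C$, the RHS of the scalar equation lies uniformly in $L^2(\mathbb{S}^2)$ (again using $|\psi_n|^2\in L^2$), and a final application of $L^2$-elliptic theory delivers the $H^2(\mathbb{S}^2)$-bound. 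The $H^{1/2}$-bound on $\psi_n$ comes from Theorem~\ref{thm1}. The main obstacle is the Br\'ezis--Merle step, specifically the upper control of the harmonic correction $w_n$: one must carefully exploit the lower bound $F_n\ge-1$ together with the global $L^1$-bound on $e^{u_n}$ and subharmonicity to convert $L^1$-information on $w_n^+$ into a pointwise bound; once this local super-integrability of $e^{u_n}$ is secured, the remainder is a routine bootstrap.
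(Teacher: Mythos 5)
Your proposal takes a genuinely different route from the paper's. The paper never invokes Br\'ezis--Merle as a black box: it writes $u_n-\bar u_n$ via the global Green's function on $\mathbb{S}^2$, applies Jensen's inequality with the weight $|\Delta u_n|/I_n$ directly to obtain local $L^1$ bounds on $e^{2p(u_n-\bar u_n)}$, and then controls $\Delta u_n$ through the weighted identity $h_{1,n}^2 e^{2u_n}=e^{-2u_n}\bigl(1-h_{2,n}e^{u_n}|\psi_n|^2-\Delta u_n\bigr)^2$ followed by a H\"older--Minkowski bootstrap. You instead do the local Br\'ezis--Merle decomposition $u_n=v_n+w_n$ (Dirichlet part plus harmonic part) on small balls, control the harmonic correction $w_n^+$ via subharmonicity and the global $L^1$ bound $\int e^{u_n}\leq C$, and bootstrap using the cruder but simpler observation $|\psi_n|^2\in L^2$ instead of the paper's weighted estimate. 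Both arguments hinge on the same $2\pi$-threshold in (\ref{concentrate}) and the uniform $L^4$ spinor bound from Theorem~\ref{thm1}; yours is closer to the original Br\'ezis--Merle mechanism and avoids the weighted estimate entirely, while the paper's is more self-contained and, importantly, handles the passage from $u_n-\bar u_n$ to $u_n$ explicitly.

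That last point is where your write-up has a real gap. On a closed manifold, $L^{q'}$-elliptic theory gives only $\|u_n-\bar u_n\|_{W^{2,q'}(\mathbb{S}^2)}\leq C\|F_n\|_{L^{q'}}$, so your claim that $\|u_n\|_{W^{2,q'}(\mathbb{S}^2)}\leq C$ (and hence $\|u_n\|_\infty\leq C$) silently requires a two-sided bound on $\bar u_n$. Jensen gives the upper bound $\bar u_n\leq C$ from $\int e^{2u_n}\leq C$, but the lower bound $\bar u_n\geq -C$ does not follow from $e^{u_n}$ being bounded in $L^p$. The paper proves it in the display following (\ref{3.2}), combining the Gauss--Bonnet identity $\int\bigl(h_{1,n}e^{2u_n}+h_{2,n}e^{u_n}|\psi_n|^2\bigr)\mathrm{d}v=4\pi$ with the bound on $\int e^{2(u_n-\bar u_n)}$. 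In your framework the fix is short: once $u_n-\bar u_n$ is bounded in $C^0$, one has $\int h_{1,n}e^{2u_n}\leq C e^{2\bar u_n}$ and $\int h_{2,n}e^{u_n}|\psi_n|^2\leq C e^{\bar u_n}\|\psi_n\|_{L^2}^2\leq C e^{\bar u_n}$, so $\bar u_n\to-\infty$ would force the left-hand side of the Gauss--Bonnet identity to vanish, a contradiction. With that step inserted your argument is complete.
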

\begin{remark}
	Regarding the small-energy compactness result for the super-Liouville equation (\ref{prev-sup-Liou}) with a negative coupling coefficient, a case distinct from our consideration, Jost et al. \cite[Theorem 4.5]{jost2007super} established the following conclusion under the parameter choice $h_1=2$ and \( h_{2}=-1 \).
	\begin{proposition}[Theorem 4.5, \cite{jost2007super}]\label{JZW}
		Let $M$ be a closed Riemann Surface. Assume that \((u_n, \psi_n)\) is a sequence of solutions for (\ref{prev-sup-Liou})
		with		
		\[
		\int_M e^{2u_n} \mathrm{d}v < \varepsilon_0, \quad \text{and} \quad \int_M |\psi_n|^4 \mathrm{d}v < C
		\]		
		for some positive constant \(\varepsilon_0 < \pi\) and \(C\). Then we have		
		\[
		\|u_n\|_{C^k(B_{\frac{1}{8}}(x))} + \|\psi_n\|_{C^k(B_{\frac{1}{8}}(x))} \leq C.
		\]		
		for any geodesic ball \(B_{\frac{1}{8}}(x)\) of \(M\).
	\end{proposition}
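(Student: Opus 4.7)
The plan is to perform a Brezis--Merle type bootstrap in a local chart, upgrading integrability for $u_n$ and $\psi_n$ simultaneously until both right-hand sides are regular. Fix $x \in M$, and work on a geodesic ball $B = B_r(x)$ small enough to admit a conformally flat chart together with a trivialization of the spinor bundle; the Laplacian and the Dirac operator then become their Euclidean counterparts modulo bounded zeroth- and first-order perturbations, which do not affect the estimates below.

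\medskip

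The first and critical step is to bound $\|f_n\|_{L^1(B)}$ strictly below $4\pi$, where $f_n = 2e^{2u_n} - K_g - e^{u_n}|\psi_n|^2$ is the right-hand side of the $u_n$ equation. The curvature term contributes $O(r^2)$, and Cauchy--Schwarz gives
\begin{equation*}
    \int_B e^{u_n}|\psi_n|^2 \leq \Bigl(\int_B e^{2u_n}\Bigr)^{1/2}\Bigl(\int_B |\psi_n|^4\Bigr)^{1/2} \leq \sqrt{\varepsilon_0 C},
\end{equation*}
while the dominant term is bounded by $2\varepsilon_0 < 2\pi$. For $r$ small enough we therefore obtain $\|f_n\|_{L^1(B)} < 4\pi - \delta$ for some $\delta > 0$. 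This is the precise role played by the threshold $\varepsilon_0 < \pi$.

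\medskip

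Next I apply Brezis--Merle. Decompose $u_n = v_n + w_n$ on $B$ with $-\Delta v_n = f_n$, $v_n|_{\partial B} = 0$, and $w_n$ harmonic. Brezis--Merle yields $\int_B e^{q |v_n|} \leq C$ for any $q < 4\pi/\|f_n\|_{L^1(B)}$, so in particular for some $q > 2$. The harmonic part $w_n$ is controlled in $L^\infty(B_{r/2})$ once $\|w_n\|_{L^1(B)}$ is controlled, which in turn reduces to an $L^1$ bound on $u_n$; the positive part $u_n^+$ is handled by Jensen applied to $e^{2u_n}$, while the negative part is controlled by testing the equation against a cutoff and using that extreme negative concentration is incompatible with the uniform bound on $\int e^{2u_n}$. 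Combining gives $e^{2u_n} \in L^{p_0}(B_{r/2})$ for some $p_0 > 1$. Then $e^{u_n}|\psi_n|^2 \in L^{q_0}(B_{r/2})$ for some $q_0 > 1$ by Hölder (using $|\psi_n|^2 \in L^2$), so elliptic regularity promotes $u_n$ to $W^{2,q_0}$ and, by Sobolev embedding, to $L^\infty(B_{r/4})$. Feeding this into the Dirac equation $\slashiii{D}\psi_n = -e^{u_n}\psi_n$, the $L^p$ estimate $\|\psi\|_{W^{1,p}} \lesssim \|\slashiii{D}\psi\|_{L^p} + \|\psi\|_{L^p}$ combined with $\psi_n \in L^4$ yields $\psi_n \in W^{1,4}$, hence $\psi_n \in L^\infty$ on a further shrunken ball.

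\medskip

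Once both components are locally $L^\infty$, the right-hand sides of both equations are in $L^\infty$, and iterating Schauder estimates for $-\Delta$ together with Dirac regularity theory upgrades the pair to $C^{1,\alpha}, C^{2,\alpha},\ldots$ and finally to uniform $C^k$ bounds on $B_{1/8}(x)$ after finitely many radius contractions. The main obstacle is the Brezis--Merle step: the small-energy hypothesis does not directly give an $L^1$ bound on $u_n$ itself, and recovering the bound on $u_n^-$ needed to control the harmonic correction $w_n$ requires a careful use of the equation structure. Once this is in place, the rest is routine bootstrapping.
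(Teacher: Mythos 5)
First, note that the paper does not prove this statement at all: Proposition \ref{JZW} is quoted verbatim from \cite{jost2007super} (their Theorem 4.5) purely for comparison with Theorem \ref{thm1.3}, so there is no in-paper proof to match your argument against. Judged on its own, your outline follows the right general strategy (Brezis--Merle plus bootstrap), but the decisive first step has a genuine gap. You bound the full right-hand side $f_n = 2e^{2u_n}-K_g-e^{u_n}|\psi_n|^2$ in $L^1(B_r)$ by $2\varepsilon_0 + O(r^2) + \sqrt{\varepsilon_0 C}$. The coupling contribution $\sqrt{\varepsilon_0 C}$ does not shrink as $r\to 0$ uniformly in $n$ (absolute continuity of the integral is not uniform along the sequence), and since $C$ is an arbitrary constant it need not be smaller than $2\pi$, or even $4\pi$. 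Moreover, even granting $\|f_n\|_{L^1(B)}<4\pi-\delta$, Brezis--Merle only gives $e^{q|v_n|}\in L^1$ for $q<4\pi/(4\pi-\delta)$, which is barely above $1$; to conclude $e^{2v_n}\in L^{p_0}$ with $p_0>1$ you need $q>2$, i.e. $\|f_n\|_{L^1(B)}<2\pi$. So the step ``so in particular for some $q>2$'' does not follow from what you established.

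The repair must use the \emph{sign} of the coupling term, which is exactly the point the surrounding remark in the paper is making: in the setting of \cite{jost2007super} one has $h_2=-1$, so the source splits as $f_n = f_n^{(1)} - e^{u_n}|\psi_n|^2$ with $f_n^{(1)}=2e^{2u_n}-K_g$ satisfying $\|f_n^{(1)}\|_{L^1(B_r)}\le 2\varepsilon_0+O(r^2)<2\pi$. Decomposing $v_n=v_n^{(1)}+v_n^{(2)}$ with $-\Delta v_n^{(1)}=f_n^{(1)}$ and $-\Delta v_n^{(2)}=-e^{u_n}|\psi_n|^2\le 0$, the maximum principle gives $v_n^{(2)}\le 0$, hence $e^{2v_n}\le e^{2v_n^{(1)}}$, and Brezis--Merle applied to $v_n^{(1)}$ alone (with mass $<2\pi$) yields the needed $L^{p_0}$ bound on $e^{2v_n}$, $p_0>1$. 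This is precisely the ``favorable a priori estimate available when the coupling coefficient is negative'' that fails for the positive-coupling equation studied in Theorem \ref{thm1.3}, which is why that theorem instead imposes the concentration condition (\ref{concentrate}) on the \emph{total} density. Separately, your control of the harmonic part $w_n$ (equivalently, the lower bound on $u_n$) is asserted rather than proved; ruling out $\bar u_n\to-\infty$ requires the integrated equation together with Gauss--Bonnet (or an equivalent structural input), not merely the upper bound $\int e^{2u_n}<\varepsilon_0$. Once these two points are fixed, the remaining Hölder/elliptic/Dirac bootstrap you describe is standard and correct.
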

	In Proposition \ref{JZW}, only a sufficiently small energy of the scalar function \( u \) is required. This improvement is made possible by a favorable a priori estimate that arises specifically when the coupling coefficient is negative. In contrast, Theorem \ref{thm1.3} deals with a positive coupling coefficient, rendering the techniques used in Proposition \ref{JZW} inapplicable.
\end{remark}
Next, we will present an alternative compactness result. We will adopt some definitions from the work of Chang and Yang \cite{MR1131392, changMR0908146}. Firstly, let us recall the conformal invariant quantities
\[
S\left[ u \right]: = \fint_{{\mathbb{S}^2}} {{{\left| {\nabla u} \right|}^2}{\text{d}{v}_{g_0}} + 2\bar u},
\]
which was introduced by Chang and Yang  \cite{changMR0908146}. To address the loss of compactness caused by conformal transformations of the sphere, let us define the set
\begin{equation}\label{zhixin}
	{\mathscr{S}}: = \left\{ {u \in {H^1}\left( {{\mathbb{S}^2}} \right):\fint_{{\mathbb{S}^2}} {{e^{2u}}{x_j}{\text{d}{v_{g_0}}}}  = 0,j=1,2,3} \right\}.
\end{equation}
Under the constraint on the centroid, a new compactness result is established for the super-Liouville equation (\ref{jinsi}).
\begin{proposition}\label{prop1.4}
	Assume that \(\left\{ {\left( {{u_n},{\psi _n}} \right)} \right\} \in H^1(\mathbb{S}^2)\times H^{1/2}( \mathcal{S}\mathbb{S}^2 )\) is a sequence of weak solutions for (\ref{jinsi}) with $0 < m_i \leq {h_{i,n}}\left( x \right) \leq M_i,\ i=1,2$. If $u_n\in \mathscr{S}$ and ${\left\| \psi_n  \right\|_{{L^{\frac{{32}}{7}}}(\mathcal{S}\mathbb{S}^2)}} \leq C$, then we have
	$$\left\| {\left( {u_n,\psi_n } \right)} \right\|_{H^1(\mathbb{S}^2)\times H^{1/2}( \mathcal{S}\mathbb{S}^2 )} \leq C.$$
	In particular, we have
	$$ (i)~ u_n \geq -C, ~~~~ (ii)~ \int_{\mathbb{S}^2}|\nabla u_n|^2 \mathrm{d}v_{g_0} \leq C.$$
\end{proposition}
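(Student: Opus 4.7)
\noindent\emph{Proof proposal.} My plan is to carry out a Chang--Yang-style argument: first obtain the uniform bound on $\int|\nabla u_n|^2\,\mathrm{d}v_{g_0}$ and the two-sided bound on $\bar u_n$ using the improved Onofri inequality afforded by the centroid constraint $u_n\in\mathscr{S}$, and then bootstrap elliptically to $u_n\in L^\infty$ and $\psi_n\in H^{1/2}$.

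Integrating the scalar equation in (\ref{jinsi}) yields $\int_{\mathbb{S}^2} h_{1,n}e^{2u_n}\,\mathrm{d}v_{g_0}+\int_{\mathbb{S}^2} h_{2,n}e^{u_n}|\psi_n|^2\,\mathrm{d}v_{g_0}=4\pi$. Since $\|\psi_n\|_{L^{32/7}}\leq C$ and $\mathbb{S}^2$ has finite volume, $\|\psi_n\|_{L^4}\leq C$, and H\"older bounds the coupling term by $C\bigl(\int_{\mathbb{S}^2}e^{2u_n}\,\mathrm{d}v_{g_0}\bigr)^{1/2}$. Combined with $m_i\leq h_{i,n}\leq M_i$, this forces $c_0\leq\int_{\mathbb{S}^2}e^{2u_n}\,\mathrm{d}v_{g_0}\leq 4\pi/m_1$ for some uniform $c_0>0$, and Jensen's inequality immediately gives the upper bound $\bar u_n\leq\tfrac{1}{2}\log(1/m_1)$.

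Next I would test the scalar equation against $u_n-\bar u_n$ and use $\int_{\mathbb{S}^2}(u_n-\bar u_n)\,\mathrm{d}v_{g_0}=0$ to arrive at the identity
$$\int_{\mathbb{S}^2}|\nabla u_n|^2\,\mathrm{d}v_{g_0}=\int_{\mathbb{S}^2} h_{1,n}e^{2u_n}(u_n-\bar u_n)\,\mathrm{d}v_{g_0}+\int_{\mathbb{S}^2} h_{2,n}e^{u_n}|\psi_n|^2(u_n-\bar u_n)\,\mathrm{d}v_{g_0}.$$
Each term on the right is bounded via the elementary inequality $|t|\leq\epsilon^{-1}e^{\epsilon|t|}$ combined with the Aubin--Chang--Yang improved Onofri inequality $\log\fint_{\mathbb{S}^2}e^{2(u-\bar u)}\,\mathrm{d}v_{g_0}\leq\frac{1}{8\pi}\int_{\mathbb{S}^2}|\nabla u|^2\,\mathrm{d}v_{g_0}+C$ valid on $\mathscr{S}$, whose sharper constant $1/(8\pi)$ (compared with the standard $1/(4\pi)$) is decisive. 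For the coupling term I would apply H\"older with conjugate exponents $16/7$ and $16/9$ to place $|\psi_n|^2$ in the bounded $L^{16/7}$ factor and leave $e^{u_n}(u_n-\bar u_n)$ in $L^{16/9}$; the choice $16/9<2$ keeps the resulting Moser--Trudinger exponent strictly subcritical. Matching these with the prefactors $e^{2\bar u_n}$ and $e^{\bar u_n}$ already controlled from the previous step, the identity yields an absorbing inequality of the form $\|\nabla u_n\|_{L^2}^2\leq C+o(1)\|\nabla u_n\|_{L^2}^2$, which closes to give $\|\nabla u_n\|_{L^2}\leq C$.

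With $\|\nabla u_n\|_{L^2}$ controlled, the improved Onofri inequality together with the lower bound $\int_{\mathbb{S}^2}e^{2u_n}\,\mathrm{d}v_{g_0}\geq c_0$ yields $\bar u_n\geq -C$; combined with the previous upper bound this produces $|\bar u_n|\leq C$ and hence $\|u_n\|_{H^1}\leq C$. Moser--Trudinger then ensures $e^{pu_n}\in L^1(\mathbb{S}^2)$ for every $p<\infty$, so the right-hand side of the scalar equation lies in $L^p$ for some $p>1$, and elliptic regularity together with the embedding $W^{2,p}(\mathbb{S}^2)\hookrightarrow L^\infty$ gives the pointwise bound $u_n\geq -C$. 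Finally, H\"older places the right-hand side of $\slashiii{D}\psi_n=h_{2,n}e^{u_n}\psi_n$ in $L^{4/3}(\mathcal{S}\mathbb{S}^2)$, and the standard mapping properties of the Dirac operator on the sphere give $\|\psi_n\|_{H^{1/2}}\leq C$. The main obstacle is the closing argument in the third paragraph: the standard Onofri constant $1/(4\pi)$ is critical and would not yield a finite bound, so the argument truly hinges on the improved $1/(8\pi)$ constant from the centroid constraint, and the hypothesis $\psi_n\in L^{32/7}$ is calibrated precisely so that the H\"older split keeps the coupling integral strictly subcritical in the resulting exponential estimate.
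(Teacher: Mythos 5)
Your overall framework (upper bound on $\bar u_n$ from Jensen, then gradient control, then elliptic bootstrapping) resembles the paper's, and your identity from testing against $u_n-\bar u_n$ is correct. But the third paragraph — the heart of the argument — has a genuine gap. When you bound $|u_n-\bar u_n|\leq\epsilon^{-1}e^{\epsilon|u_n-\bar u_n|}$ and then apply Moser--Trudinger, the resulting inequality has the shape $X\leq C_1 e^{c_1 X}+C_2e^{c_2X}$ with $X=\int|\nabla u_n|^2\,\mathrm{d}v$ and $c_1,c_2>0$ (roughly $c_1\approx\tfrac{1}{8\pi}$, $c_2\approx\tfrac{1}{18\pi}$ with your exponents). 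This is \emph{not} an absorbing inequality of the form $X\leq C+o(1)X$: the right side is exponential in $X$, so the inequality is eventually satisfied for all large $X$ and yields no a priori bound. Lowering the Onofri constant from $1/(4\pi)$ to $1/(8\pi)$, or tuning the H\"older split to $16/9<2$, shrinks the exponent constants but does not change the structure — an exponential of $X$ never becomes a small multiple of $X$. So the claimed closure does not occur, and since part (ii) in your write-up is derived \emph{after} and \emph{from} the gradient bound, that step falls too.

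What the paper does instead is qualitatively different. It never produces $e^{cX}$ on the right. It applies Jensen's inequality to the probability measure $\tfrac{1}{4\pi}(h_{1,n}e^{2u_n}+h_{2,n}e^{u_n}|\psi_n|^2)\,\mathrm{d}v$ to get a \emph{logarithmic} bound, and only then applies the improved Moser--Trudinger inequality to the exponentials inside the log. After taking the log, the gradient contribution on the right becomes exactly $2\int|\nabla u_n|^2$, cancelling the left-hand side identically; what survives is the estimate $e^{2\bar u_n}\leq M_1e^{4\bar u_n}+Ce^{3\bar u_n}$, giving $\bar u_n\geq -C$ (part (i) only). To get the gradient bound (iii), the paper then uses a $\delta$-trick: it replaces the Jensen measure by $\tfrac{1}{4\pi}\tfrac{h_{1,n}e^{2u_n}+h_{2,n}e^{u_n}|\psi_n|^2-\delta}{1-\delta}\,\mathrm{d}v$ for small $\delta>0$, which, after the same log-and-Moser--Trudinger computation, produces a prefactor $(1-\delta)$ in front of the gradient term on the right, so that $2\delta\int|\nabla u_n|^2\leq C$ and the bound follows. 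This $\delta$-trick is the essential ingredient you are missing. Incidentally, the paper's proof of (ii) is a direct sign argument from Green's representation and does \emph{not} depend on the gradient bound, so the logical order is (i) $\to$ (ii) $\to$ (iii), not (iii) first.
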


\begin{remark}From $(i)$ and $(ii)$ we have $S[u_n]\leq C$. 
	From Theorem \ref{thm-principle} and the proof of Theorem \ref{thm1.3}, it can be seen that the compactness of the spinor part can be largely controlled by the scalar part. However, Proposition \ref{prop1.4} tells us that on the sphere, the compactness of the scalar part is actually also constrained by the spinor part.
\end{remark}

Next, we will investigate the existence of nontrivial solutions on the sphere. We briefly review the existence results that have been established for equation (\ref{prev-sup-Liou}) with $K_g \equiv K_{g_0}\equiv1$. In the case of $h_1\equiv2$ and $h_2\equiv-1$, Jost et al. \cite{jost2007super} identified that, under these circumstances, equation (\ref{prev-sup-Liou}) admits two fundamental types of solutions. One type is \((u_\phi, 0)\), where \(u_\phi = \frac{1}{2}\log \frac{1}{2} + \frac{1}{2}\log \det\left(  {d\phi } \right) \) and $\phi$ is a conformal diffeomorphism of ${\mathbb{S}^2}$.
The other type is \((0, \psi_0)\), where \(\psi_0\) is a Killing spinor with $\left| {{\psi _0}} \right| \equiv1$. When $h_1\equiv1$ and $h_2\equiv \rho\in\mathbb{R}$ is a parameter, Jevnikar et al. \cite{MR4206467} pointed out that
\begin{enumerate}[label=\roman*), leftmargin=*]
	\setcounter{enumi}{0}
	\item if $(u',\psi')$ is a solution to (\ref{prev-sup-Liou}) and $\psi'$ is not identically zero, then $\rho>1$.
	\item equation (\ref{prev-sup-Liou}) has solutions
	$\left( {u'_\rho,\psi'_\rho } \right) = \left( { - \ln \rho ,\frac{{\sqrt {{\rho ^2} - 1} }}{\rho }{\varphi _1}} \right)$
	for $\forall\rho \geq 1$, where $\varphi_1$ is the first positive eigenspinor of $\slashiii{D}$ as well as $\left| {{\varphi _1}} \right| = 1$;
	\item if $\rho$ is in the spectrum of the Dirac operator $\slashiii{D}$, then $\rho$ is a bifurcation point, thus yielding some nontrivial solutions of (\ref{prev-sup-Liou}) when $\rho$ is near the eigenvalues of $\slashiii{D}$.
\end{enumerate}
\begin{remark}
	Based on the above information, it is particularly worth mentioning that for the conformally invariant equation
	\[\begin{cases}
		{ - \Delta u = {e^{2u}} - 1 + {e^u}{{\left| \psi  \right|}^2},}&{{\text{on}}\;{\mathbb{S}^2},} \\
		{\slashiii{D}\psi  = {e^u}\psi ,}&{{\text{on}}\;{\mathbb{S}^2},}
	\end{cases}\]
	all solutions must necessarily be of the form \((u_\phi, 0)\). In other words, the solutions to this equation are completely classified. Regarding the form that first proposed by Jost-Wang-Zhou,
	\[\begin{cases}
		{ - \Delta u = {e^{2u}} - 1 - {e^u}{{\left| \psi  \right|}^2},}&{{\text{on}}\;{\mathbb{S}^2},} \\
		{\slashiii{D}\psi  =  - {e^u}\psi ,}&{{\text{on}}\;{\mathbb{S}^2},}
	\end{cases}\]
	there exist two types of semi-trivial solutions, \((u_\phi, 0)\) and \((0,\psi_0)\), even though the two equations differ only by the sign in front of the coupling term.
\end{remark}

Let us return to equation (\ref{key-Sphere}). We first seek some functions $h_i,i=1,2$ such that equation (\ref{key-Sphere}) admits special solutions with both non-zero components. Let $h_2$ be a positive constant. If the solution $u$ is a constant $\bar u$, then by the second linear equation of (\ref{key-Sphere}), we may choose that ${e^{\overline u }} = {\lambda _i}/{h_2},\ i\geq 1$, where $\lambda_i$ is the eigenvalue of the Dirac operator. In this case, the spinor can be chosen to be $\psi=c\varphi_i$ for any $c\in \mathbb{R}$. Thus from the first equation we have  $0 = {\left( \lambda _i/{h_2}\right) ^2}{h_1} - 1 + {\lambda _i}{c^2}\left| {{\varphi _i}} \right|^2.$	Therefore $0 \leq \left| {{\varphi _i}} \right|^2 = \frac{1}{{{c^2}}}\left( {\frac{1}{{{\lambda _i}}} -\frac{{\lambda _i}{h_1}}{h^2_2}} \right)$.  From section 2 we know the first positive eigenvalue $\lambda$ of Dirac operator on the sphere is 1. It follows that in this case, \(h_1 \leq h_2^2\). This indicates that for some \(h_1 \leq h_2^2\), there exist nontrivial solutions with $\psi  \not \equiv 0$.

Actually, for the case in which  $h_1$ and $h_2$ are positive functions, by applying recent results on zero mode inequalities established by Wang and Zhang \cite{wang2025conformal}, we can determine the conditions under which equation (\ref{key-Sphere}) admits solutions with a nontrivial spinor component.
\begin{theorem}\label{trivial-condation}
	Assume $h_i(x) \in C^{\infty}(\mathbb{S}^2),i=1,2,$ are positive smooth functions on $\mathbb{S}^2$. If equation (\ref{key-Sphere}) admits a solution \((u,\psi)\) with \(\psi \not\equiv 0\), then there must be \( h_{1,\min} < h_{2,\max}^2 \).
\end{theorem}

To establish the general existence of solutions, we employ variational methods to obtain  least-energy solutions for equation (\ref{key-Sphere}) when the functions \( h_i,\ i=1,2 \) are even. Consider the functional \(E: {H^1}({\mathbb{S}^2}) \times {H^{\frac{1}{2}}}(\mathcal{S} {\mathbb{S}^2})\to \mathbb{R} \) defined by
\begin{equation}\label{Functional}
	E\left( {u,\psi } \right) = \int_{{\mathbb{S}^2}} {\left[ {{{\left| {\nabla u} \right|}^2} + 2u - h_1{e^{2u}} + 2\left( {\left\langle {\slashiii{D}\psi ,\psi } \right\rangle  - h_2{e^{u}}{{\left| \psi  \right|}^2}} \right)} \right]{\text{d}{v}_{g_0}}}  + 4\pi.
\end{equation}
Equation (\ref{key-Sphere}) represents the Euler-Lagrange equation corresponding to this functional. The solutions we seek correspond to critical points of the functional \(E(u,\psi)\) that attain the minimal energy level, mathematically equivalent  to the physical concept of ground state solutions. For terminological consistency in the variational framework, we shall exclusively use the term \textit{least-energy solutions} throughout the work.

The primary challenge stems from the fact that the functional \(E(u,\psi) \) is neither bounded above nor below. Owing to the spectral properties of the Dirac operator, the functional exhibits strong indefiniteness. Drawing inspiration from the work of Jevnikar et al. \cite{jevnikar2020existence1,MR4206467}, we introduce a novel natural constraint \( \mathcal{A} \), under which the critical points of the functional \(E(u,\psi) \) coincide with its unconstrained critical points. Utilizing the Moser-Trudinger inequality, we demonstrate that the functional becomes bounded below and coercive when restricted to this constraint. Using the same ideas and techniques, we prove a supersymmetric inequality, see Theorem \ref{SMT}, which can be regarded as a supersymmetric generalization of the classical Moser–Trudinger–Onofri inequality.

Another additional difficulty involves excluding semi-trivial solutions and ensure the nontriviality of the spinor component. Given the even symmetry of  \(h_i\), we employ the well-known even symmetric Moser Trudinger inequality combined with a direct variational approach to establish that the functional \(4\pi S[u]\) attains its minimum on the constraint
\[
\mathcal{A}_0 = \left\{ u \in H^1(\mathbb{S}^2) : \int_{\mathbb{S}^2} h_1 e^{2u} \, \text{d}v = 4\pi, \; h_1(x) = h_1(-x) \right\},
\]
and this minimizer corresponds to a solution of equation (\ref{Liouville}) on the sphere. Denoting  this solution by \(u_{h_1}^*\), we observe that equation (\ref{key-Sphere}) always admits a semi-trivial solution of the form \((u_{h_1}^*,0)\). To exclude such solutions, we introduce a criterion based on the first eigenvalue of the weighted Dirac operator ${\left( {{h_2}{e^{{u_{h_1}^*}}}} \right)^{ - 1}}\slashiii{D}$. We write $\omega \left( {{h_1},{h_2}} \right) = {h_2}{e^{u_{{h_1}}^*}}>0$, and define $\lambda _1({h_2},{h_1})$ as

\[{\lambda _1}({h_2},{h_1}): = \mathop {\inf }\limits_{\psi \not  \equiv 0} \left\{ {\frac{{\int_{{\mathbb{S}^2}} {\left\langle {\slashiii{D}\psi ,\psi } \right\rangle } {\text{d}}v}}{{\int_{{\mathbb{S}^2}} {\omega \left( {{h_1},{h_2}} \right){{\left| \psi  \right|}^2}{\text{d}}v} }}:\psi  \in {H^{1/2}}(\mathcal{S}{\mathbb{S}^2}),\, \psi { \bot _\omega }{\text{Ker}}{{(\slashiii{D} - \mu \omega \left( {{h_1},{h_2}} \right)\mathbb{I})}_{\mu  < 0}}} \right\}\]
Here \(\bot _\omega\) is weighted orthogonal, see Lemma \ref{Eigen-positve} for details. By Lemma \ref{Eigen-positve}, we have \(\lambda_1(h_2, h_1)  > 0\). Hence, we have the following Theorem.
\begin{theorem}\label{main-Thm}
	Let $h_i(x) $ be positive smooth functions on $\mathbb{S}^2$ satisfying $
	h_i(-x) = h_i(x)$ for all $x\in \mathbb{S}^2$, $i=1,2$. Then there exists a least-energy solution \((u, \psi) \in H_{even}^1(\mathbb{S}^2)\times H^{1/2}( \mathcal{S}\mathbb{S}^2 )\) to equation (\ref{key-Sphere}). If the spectral condition \(\lambda_1(h_2, h_1) < 1\) holds, then the least-energy solution is nontrivial, i.e., \(\psi \not\equiv 0\).
\end{theorem}
\begin{remark}
	If both \(h_1\) and \(h_2\) are positive constants, then from the fact that the smallest positive eigenvalue of the Dirac operator on the standard sphere is \(1\), we obtain
	\[
	\lambda_1(h_2, h_1) < 1 \quad \Leftrightarrow \quad h_1 < h_2^2.
	\]
\end{remark}

Finally, we provide a complete classification of nontrivial least-energy solutions when $h_1 \equiv A$ and $h_2 \equiv B$ are constants satisfying $A < B^2$, in which case system \eqref{key-Sphere} reads
\begin{equation}\label{A-B-key-Sphere}
	\begin{cases}
		{ - \Delta_{g_0} u = A{e^{2u}} - 1 + B{e^{u}}{{\left| \psi  \right|}^2},}&{{\text{on}}\;{\mathbb{S}^2},} \\
		{\slashiii{D}_{g_0}\psi  = B{e^{u}}\psi ,}&{{\text{on}}\;{\mathbb{S}^2}.}
	\end{cases}
\end{equation}
\begin{theorem}\label{Classification}
	Assume that \(A<B^2\). Let \((u,\psi)\) be a nontrivial least-energy solution of (\ref{A-B-key-Sphere}). Then, up to the action of the M\"obius group of \(\mathbb{S}^2\),
	\[
	u\equiv -\log B,\qquad \psi=\sqrt{1-\frac{A}{B^2}}\;\varphi_1,
	\]
	where \(\varphi_1\) is a first positive eigenspinor of the standard Dirac operator on \((\mathbb{S}^2,g_0)\), normalized by
	\[
	\slashiii{D}_{g_0}\varphi_1=\varphi_1,\qquad |\varphi_1|\equiv1.
	\]
	
	Equivalently, every nontrivial least-energy solution is of the form
	\[
	u=
	-\log B+\frac12\log \det(d\Phi) \quad \text{and}\quad
	\psi=\sqrt{1-\frac{A}{B^2}}\,\Phi^*\varphi_1,
	\]
	for some conformal transformation \(\Phi\in\mathrm{Conf}(\mathbb{S}^2)\).
\end{theorem}
\begin{remark}
	Via the stereographic projection $\pi:\mathbb{S}^2 \setminus \{N\}\to\mathbb R^2$,
	the least-energy solution obtained in Theorem \ref{Classification} corresponds, up to Euclidean translations and dilations, to the standard entire solution on \(\mathbb R^2\).
	
	More precisely, after pulling back by stereographic projection, the conformal factor becomes
	\[
	u_0(x)
	=\log\frac{2}{B(1+|x|^2)},
	\qquad x\in\mathbb R^2.
	\]
	Moreover, the spinor component takes the form
	\[
	\psi_0(x) = \sqrt{1-\frac{A}{B^2}}\, \frac{\sqrt{2}}{1+|x|^2}\Bigl(\mathbf 1-i\,x\cdot \Bigr)\Psi,
	\]
	where \(\Psi\in\Sigma_2\cong\mathbb C^2\) is a constant spinor satisfying $|\Psi|=1$, and \(x\cdot\) denotes Clifford multiplication in \(\mathbb R^2\).
	Consequently, the two nonlinear energies can be computed explicitly:
	\[
	A\int_{\mathbb R^2}e^{2u_0}{\text{d}}x
	=A\int_{\mathbb R^2} \frac{4}{B^2(1+|x|^2)^2}{\text{d}}x
	=\frac{4\pi A}{B^2},
	\]
	and
	\[
	B\int_{\mathbb R^2}e^{u_0}|\psi_0|^2{\text{d}}x
	=\left(1-\frac{A}{B^2}\right) \int_{\mathbb R^2} \frac{4}{(1+|x|^2)^2}{\text{d}}x
	=4\pi\left(1-\frac{A}{B^2}\right).
	\]
\end{remark}	
The remainder this article is structured as follows. Section 2 established the analytical foundation by reviewing essential spin geometry concepts and compiling key properties of Dirac operators on manifolds with boundary. Building upon these preliminaries, Section 3 explores the conformal invariance of system \eqref{key-Sphere} under M\"obius group transformations, leading to the derivation of a global Pohozaev-type identity that serves as the fundamental algebraic constraint for subsequent asymptotic analysis. The technical heart of the paper lies in Sections 4 and 5, where we develop a series of refined local estimates and perform a precise bubbling analysis, ultimately proving our principal compactness results (Theorems \ref{thm-principle}, \ref{thm1}, \ref{DE-th}, \ref{thm1.3} and Proposition \ref{prop1.4}). In Section 6, we show that the functional is bounded below on the set \(\mathcal{A}\), and we obtain a supersymmetric version of the Moser–Trudinger–Onofri inequality. Section 7 leverages these compactness theorems in conjunction with the intrinsic topological constraints of the underlying functional to construct a variational minimax scheme, thereby establishing the existence of a least-energy solution and completing the proof of Theorem \ref{main-Thm}. The final Section (Section 8) examines the constant coefficient case, providing an explicit classification of nontrivial least-energy solutions as asserted in Theorem \ref{Classification}. In Appendix A, we provide the sharp constant for the \(H^{1/2}\)-Sobolev inequality when \(n \geq 2\) and prove that it is not attained.
\section{Geometric and analytic settings}

\subsection{Spinors}
We first provide a brief overview of the theory of spinors and the Dirac operator on general Riemann surfaces. For more detailed information, see \cite{MR2509837,MR1031992,MR1867733}. Let \((\Sigma,g)\) be a Riemann surface of genus \(\gamma\) equipped with a fixed spin structure. The spinor bundle \(\mathcal{S}\Sigma\) on \(\Sigma\) is equipped with a natural Hermitian product \(\langle\cdot, \cdot\rangle_h\) induced by the metric \(g\). A spinor \(\psi \in \Gamma(\mathcal{S}\Sigma)\) is a section of the spinor bundle \(\mathcal{S}\Sigma\).

The Clifford algebra \(\text{Cl}(T_x\Sigma, g_x)\) is generated by the tangent space \(T_x\Sigma\) and the metric \(g_x\) at any point \(x \in \Sigma\), satisfying the Clifford relation:
\begin{equation}\label{Clifford}
	X_i \cdot X_j + X_j \cdot X_i = -2g_x(X_i, X_j).
\end{equation}
We denote the bundle of Clifford algebras over \(\Sigma\) by
\[
\text{Cl}(\Sigma,g) = \coprod_{x \in \Sigma} \text{Cl}(T_x\Sigma, g_x).
\]
There exists a representation
\[
\rho: T\Sigma \to \text{End}(\mathcal{S}\Sigma),
\]
which can be extended to \(\text{Cl}(\Sigma,g)\):
\[
\rho: \text{Cl}(\Sigma,g) \otimes \mathcal{S}\Sigma \to \mathcal{S}\Sigma, \quad \sigma \otimes \psi \mapsto \rho(\sigma) \cdot \psi.
\]
We will use the notation \(X \cdot \psi\) to represent \(\rho(X) \cdot \psi\). The representation \(\rho\) is compatible with the connection \(\nabla\) and with the Hermitian metrics
${\left\langle { \cdot , \cdot } \right\rangle _h}$. Specifically, if \(\{e_1, e_2\}\) is a local orthonormal frame on \(T\Sigma\) satisfying
\[
e_i \cdot e_j \cdot \psi + e_j \cdot e_i \cdot \psi = -2\delta_{ij} \psi,
\]
then the following properties hold:
\[
\nabla_{e_i}(e_j \cdot \psi) = (\nabla_{e_i} e_j) \cdot \psi + e_j \cdot (\nabla_{e_i} \psi),
\]
\[
\langle \psi, \varphi \rangle_h = \langle e_i \cdot \varphi, e_i \cdot \psi \rangle_h,
\]
and
\begin{equation}\label{Clofford-inner-product}
	{\left\langle {X \cdot {\psi _1},X \cdot {\psi _2}} \right\rangle _h} = {g}\left( {X,X} \right){\left\langle {{\psi _1},{\psi _2}} \right\rangle _h},\quad\forall X \in \Gamma \left( {T\Sigma } \right),\forall {\psi _1},\ {\psi _2} \in \Gamma \left( {S\Sigma } \right).
\end{equation}
The Dirac operator \(\slashiii{D}\) is defined by
\[
\slashiii{D} \psi := \sum_{\alpha=1}^2 e_\alpha \cdot \nabla_{e_\alpha} \psi.
\]
Regarding the Dirac operator, there is the following important Lichnerowicz formula
\begin{equation}\label{Lichnerowicz}
	\slashiii{D}_g^2\psi  =  - {\Delta _g}\psi  + \frac{{{R_g}}}{4}\psi,
\end{equation}
where \( R_g \) denotes the scalar curvature of the metric \( g \). Moreover, on the sphere, there holds Bar's inequality \cite{MR1162671} concerning the eigenvalues of the Dirac operator.
\begin{proposition}
	Let $g$ be any metric on $\mathbb{S}^2$ conformal to the standard metric $g_0$, and let
	$\lambda_1(\slashiii{D}_g)>0$
	denote the first positive eigenvalue of the Dirac operator $D_g$.
	Then
	\begin{equation}\label{Bar-inq}
		\lambda_1(\slashiii{D}_g)^2\operatorname{Area}(\mathbb{S}^2,g)\ge 4\pi.
	\end{equation}
	Moreover, equality holds if and only if $(\mathbb{S}^2,g)$ is a round sphere, equivalently,
	\[
	g=c\,\Phi^*g_0
	\]
	for some constant $c>0$ and some conformal transformation
	$\Phi\in \mathrm{Conf}(\mathbb{S}^2)$.
	In this case, the eigenspinors corresponding to $\lambda_1(\slashiii{D}_g)$ are precisely the Killing spinors.
\end{proposition}
\begin{proof}
	This follows from the sharp eigenvalue estimate of B\"ar
	(or equivalently Hijazi's inequality in dimension two), see \cite{MR1162671,MR834486}.
\end{proof}

For further details on spinors, we refer the reader to \cite{MR1343997, MR0358873, jost2008riemannian}.
\subsection{Conformal symmetry and transformations}
For clarity, we explicitly label various geometric objects with the metric \(g\). The material that follows is primarily drawn from the exposition in \cite[Section 2]{MR4239839}. First, we introduce the behavior of the Dirac operator under a pointwise conformal transformation, i.e., when \(g_v = e^{2v}g\) with \(v\in C^{\infty}(\Sigma)\). Then there is an isometric map
\[
\begin{split}
	b : (T\Sigma,g) &\to (T\Sigma,e^{2v}g) \\
	X &\mapsto e^{-v}X
\end{split}
\]
between \((T\Sigma,g)\) and \((T\Sigma,e^{2v}g)\). The map \(b\) induces an isomorphic isometric map on the spinor bundle
\[
\beta\equiv\beta_{g,g_v} : (\mathcal{S}_g\Sigma,h)\to(\mathcal{S}_{g_v}\Sigma,h_v).
\]
The map \(\beta\) is independent of the spinor connection. Under this map, there is a well-known two-dimensional conformal transformation formula as follows
\[
\slashiii{D}_{g_v}\beta\left(e^{-\frac{1}{2}v}\psi\right)=e^{-\frac{3}{2}v}\beta(\slashiii{D}_g\psi).
\]

Next, we introduce the transformation induced by a conformal diffeomorphism from surfaces. Let \(\phi:\Sigma\to\Sigma\) be a diffeomorphism that preserves the orientation and the spinor structure. Let \(\phi^{*}g\) denote the pull-back metric on \(T\Sigma\). Then the tangent map \(T\phi:(T\Sigma,\phi^{*}g)\to(T\Sigma,g)\) is an isometric map, and it also preserves the Levi-Civita connection. There exists an induced map \(\Phi\), which also covers the map \(\phi\), in the sense that the following diagram is commutative:
\[\begin{array}{*{20}{c}}
	{({\mathcal{S} _{{{\phi}^*g}}}\Sigma,{\phi}^*h)}&{\xrightarrow{\Phi }}&{({\mathcal{S} _g}\Sigma,{h})} \\
	\downarrow &{}& \downarrow  \\
	{(\Sigma,{{\phi}^*g})}&{\xrightarrow{{\phi}}}&{(\Sigma,g)}
\end{array}\]
The map \(\Phi\) preserves the spinor connection and is an isometric map of vector bundles, and thus preserves the Dirac operator. Therefore for any \(\psi\in\Gamma(\mathcal{S}_g\Sigma)\), denoting \(\phi^{*}\psi=\Phi^{-1}\circ\psi\circ\phi\in\Gamma(\mathcal{S}_{\phi^{*}g}\Sigma)\) as the pulled-back spinor, then
\begin{equation}\label{2.22}
	\Phi\left(\slashiii{D}_{\phi^{*}g}(\phi^{*}\psi)(x)\right)=\slashiii{D}_g\psi(\phi(x)),\quad x\in\Sigma.
\end{equation}
and
\begin{equation}\label{2.23}
	{\left\langle {{\phi ^*}\psi ,{\phi ^*}\psi } \right\rangle _{\phi^*h}} = {\left\langle {\psi \circ \phi ,\psi \circ \phi } \right\rangle _{h\circ \phi }}.
\end{equation}
For the sake of notational convenience in the sequel, we omit the subscript in the Hermitian inner product \( \langle \cdot, \cdot \rangle _h\) for spinors.
\subsection{Sobolev spaces of spinors}\label{Sobolev-spin}
We now introduce the fractional Sobolev spaces \( H^{1/2}(\mathcal{S}\Sigma) \) for spinors on general Riemann surfaces. For foundational material on Sobolev spaces and fractional Sobolev spaces, we refer to [4, 20].

We first describe the spectrums of the Dirac operator $\slashiii{D}$, denoted by Spec($\slashiii{D}$).
In $L^2({\mathcal{S}\Sigma })$ we can write $\operatorname{Spec}(\slashiii{D})$ as $\operatorname{Spec} (\slashiii{D}) = {\left\{ {{\lambda _k}} \right\}_{k \in {\mathbb{Z}_*}}} \cup \left\lbrace 0\right\rbrace $, where $\mathbb{Z}_*$ = $\mathbb{Z} \backslash\{0\}$.  ${\left( {{\lambda _k}} \right)_{k \in {\mathbb{Z}_*}}}$ are the non-zero eigenvalues. A spinor corresponding to the eigenvalue 0 is referred to as a harmonic spinor.
We denote that the dimension of vector space of harmonic spinors is $h^ 0 $, and put the non-zero eigenvalues in an increasing order (in absolute value) and counted with multiplicities to get
$$
-\infty \leftarrow \cdots \leq \lambda_{-l-1} \leq \lambda_{-l} \leq \cdots \leq \lambda_{-1} < 0 < \lambda_1 \leq \cdots \leq \lambda_k \leq \lambda_{k+1} \leq \cdots \rightarrow+\infty .
$$
Let $\varphi_k$ be the eigenspinors corresponding to $\lambda_k, k \in \mathbb{Z}_*$ with $\left\|\varphi_k\right\|_{L^2(\mathcal{S}\Sigma)}=1$. Similarly, \((\varphi_{0,j})_{1 \le j \le h^0}\) are mutually \(L^2\)-orthogonal harmonic spinors satisfying \(\|\varphi_{0,j}\|_{L^2(\mathcal{S}\Sigma)} = 1\). These eigenspinors together form a complete orthonormal basis of $L^2(\mathcal{S}\Sigma)$ : any spinor $\psi \in \Gamma(\mathcal{S}\Sigma)$ can be expressed in terms of this basis as
$$
\psi=\sum_{k \in \mathbb{Z}_*} a_k \varphi_k+\sum_{1 \leq j \leq h^0} a_{0, j} \varphi_{0, j},
$$
and the Dirac operator acts as
$$
\slashiii{D} \psi=\sum_{k \in \mathbb{Z}_*} \lambda_k a_k \varphi_k.
$$
For any $s>0$, the operator $|\slashiii{D}|^s: \Gamma(\mathcal{S}\Sigma) \rightarrow \Gamma(\mathcal{S}\Sigma)$ is defined as
$$
|\slashiii{D}|^s \psi=\sum_{k \in \mathbb{Z}_*}\left|\lambda_k\right|^s a_k \varphi_k,
$$
provided that the right-hand side belongs to $L^2(\mathcal{S}\Sigma)$. In particular, $|\slashiii{D}|^1$ defined in $L^2(\mathcal{S}\Sigma)$ is abbreviated as $|\slashiii{D}|$ with
$$
\operatorname{Spec}(|\slashiii{D}|)=\left\{\left|\lambda_k \right| : k \in \mathbb{Z}_*\right\}.
$$
Similarly,  the spectrum of $|\slashiii{D}|^{1 / 2}$ is
$$
\operatorname{Spec}\left(|\slashiii{D}|^{1 / 2}\right)=\left\{\left|\lambda_k \right|^{1 / 2} : k \in \mathbb{Z}_*\right\}.
$$
Let
$$
{H^{\frac{1}{2}}}(\mathcal{S}\Sigma) = Dom\left( {{{\left| \slashiii{D} \right|}^{\frac{1}{2}}}} \right) = \left\{ {\psi  \in {L^2}(\mathcal{S}\Sigma)\left| {\int_\Sigma {\left\langle {{{\left| \slashiii{D} \right|}^{\frac{1}{2}}}\psi ,{{\left| \slashiii{D} \right|}^{\frac{1}{2}}}\psi } \right\rangle } {\text{d}{v}} < \infty } \right.} \right\}
$$
denote the domain of the operator $|\slashiii{D}|^{1 / 2}$.  It is clear that $|\slashiii{D}|^{1 / 2}$ is self-adjoint in $L^2(\mathcal{S} \Sigma)$. Define an inner product
\begin{equation}\label{original_norm}
	{\langle \psi ,\phi \rangle _{{H^{\frac{1}{2}}}}} = {\langle \psi ,\phi \rangle _{{L^2}}} + {\left\langle {|\slashiii{D}{|^{\frac{1}{2}}}\psi ,|\slashiii{D}{|^{\frac{1}{2}}}\phi } \right\rangle _{{L^2}}}. \quad \forall \psi, \varphi \in H^{\frac{1}{2}}(\mathcal{S}\Sigma).
\end{equation}
Let $\|\cdot\|_{H^{\frac{1}{2}}(\mathcal{S}\Sigma)}=\langle  \cdot , \cdot \rangle _{{H^{\frac{1}{2}}}(\mathcal{S}\Sigma)}^{\frac{1}{2}}$. Then $(H^{\frac{1}{2}}(\mathcal{S}\Sigma),\|\cdot\|_{H^{\frac{1}{2}}(\mathcal{S}\Sigma)})$ is a Hilbert space. For this space and $q\leq 4$, there exists a continuous embedding
\begin{equation}\label{embedding}
	H^{\frac{1}{2}}(\mathcal{S}\Sigma) \hookrightarrow L^q(\mathcal{S}\Sigma).
\end{equation}
Furthermore, for \( q < 4 \), the embedding is compact, for more details, see, for instance, reference \cite{jevnikar2020existence1,MR4206467}.

Let \( L^{2,+}(\mathcal{S}\Sigma) \) and \( L^{2,-} (\mathcal{S}\Sigma)\) denote the subspaces spanned by $\left\{ {{\varphi _k}:k > 0} \right\}$ and $\left\{ {{\varphi _k}:k < 0} \right\}$ respectively, while $L^{2,0}$ denotes the space of harmonic spinors in $L^2$. If the spin structure of $\Sigma$  is chosen such that $0\notin \operatorname{Spec}(\slashiii{D}_g)$, we can split spinors into the positive and negative parts according to the spectrum of $\slashiii{D}=\slashiii{D}_g$, i.e  we have the decomposition
\begin{equation}\label{2.2}
	H^{\frac{1}{2}}(\mathcal{S}\Sigma)=H^{\frac{1}{2},{+}}(\mathcal{S}\Sigma) \oplus H^{\frac{1}{2},{-}}(\mathcal{S}\Sigma),
\end{equation}
where
$$
H^{\frac{1}{2},{+}}(\mathcal{S}\Sigma)=H^{\frac{1}{2}}(\mathcal{S}\Sigma) \cap L^{2,+}(\mathcal{S}\Sigma),\quad H^{\frac{1}{2},{-}}(\mathcal{S}\Sigma)=H^{\frac{1}{2}}(\mathcal{S}\Sigma) \cap L^{2,-}(\mathcal{S}\Sigma).
$$
Furthermore, if we write $\psi=\psi^{+}+\psi^{-}$, which is decomposed according to (\ref{2.2}), then we have
\begin{equation}\label{spin-eqv-norm1}
	\int_\Sigma\left\langle\slashiii{D} \psi^{+}, \psi^{+}\right\rangle {\text{d}{v}}=\int_\Sigma\left\langle|\slashiii{D}|^{\frac{1}{2}} \psi^{+},|\slashiii{D}|^{\frac{1}{2}} \psi^{+}\right\rangle {\text{d}{v}} \geq \lambda_1\left(\slashiii{D} _g\right)\left\|\psi^{+}\right\|_{L^2(\Sigma)}^2,
\end{equation}
where $\lambda_1$ is the first positive eigenvalue of $\slashiii{D}=\slashiii{D}_g$. Hence
\begin{equation}\label{spin-eqv-norm2}
	\begin{aligned}
		\left\|\psi^{+}\right\|_{H^{\frac{1}{2}}}^2 & =\left\|\psi^{+}\right\|_{L^2}^2+\left\||\slashiii{D}|^{\frac{1}{2}} \psi^{+}\right\|_{L^2}^2
		\leq\left(\lambda_1\left(\slashiii{D}_g\right)^{-1}+1\right)\left\||\slashiii{D}|^{\frac{1}{2}} \psi^{+}\right\|_{L^2}^2\\ &\leq\left(\lambda_1\left(\slashiii{D}_g\right)^{-1}+1\right)\left\|\psi^{+}\right\|_{H^{\frac{1}{2}}}^2.
	\end{aligned}
\end{equation}
That is, for a given $g$, the integral $\int_\Sigma\left\langle\slashiii{D} \psi^{+}, \psi^{+}\right\rangle {\text{d}{v}}$ defines a norm on $H^{\frac{1}{2},+}(\mathcal{S} \Sigma)$ which is equivalent to the Hilbert's. Similarly, on $H^{\frac{1}{2},-}(\mathcal{S} \Sigma)$ there is an equivalent norm given by
$$
-\int_\Sigma\left\langle\slashiii{D} \psi^{-}, \psi^{-}\right\rangle {\text{d}{v}}=\left\||\slashiii{D}|^{\frac{1}{2}} \psi^{-}\right\|_{L^2}^2 .
$$
Consequently,
\begin{equation}\label{2.3}
	\int_\Sigma\left[\left\langle\slashiii{D} \psi^{+}, \psi^{+}\right\rangle-\left\langle\slashiii{D} \psi^{-}, \psi^{-}\right\rangle\right] {\text{d}{v}}=\left\||\slashiii{D}|^{\frac{1}{2}} \psi^{+}\right\|_{L^2}^2+\left\||\slashiii{D}|^{\frac{1}{2}} \psi^{-}\right\|_{L^2}^2
\end{equation}
defines a norm equivalent to the $H^{\frac{1}{2}}(\mathcal{S} \Sigma)$-norm.

It is worthy to mention that the functional \(E: {H^1}({\mathbb{S}^2}) \times {H^{\frac{1}{2}}}(\mathcal{S} {\mathbb{S}^2})\to \mathbb{R} \),
\begin{equation*}
	E\left( {u,\psi } \right) = \int_{{\mathbb{S}^2}} {\left[ {{{\left| {\nabla u} \right|}^2} + 2u - h_1{e^{2u}} + 2\left( {\left\langle {\slashiii{D}\psi ,\psi } \right\rangle  - h_2{e^{u}}{{\left| \psi  \right|}^2}} \right)} \right]{\text{d}{v}_{g_0}}}  + 4\pi
\end{equation*}
is well defined and is of class $C^1$. By using the similar arguments of Jost et al. \cite{jost2007super}, the critical points of $E(u,\psi)$, which are weak solutions of (\ref{key-Sphere}), are actually smooth.
\subsection{Spinors on the sphere}
Finally, in particular, we provide a brief introduction to spinors on the sphere. Consider the spin bundle \( \mathcal{S}\mathbb{S}^2 \to \mathbb{S}^2 \) associated with the unique spin structure on \( \mathbb{S}^2 \), and let \( \slashiii{D}_{g_0} \) denote the Dirac operator on \( (\mathbb{S}^2,g_0) \). The spectrum of the Dirac operator \( \slashiii{D}_{g_0} \) on \( \mathbb{S}^2 \) consists of the eigenvalues \( \pm(k + 1) \) for \( k \in \mathbb{N} \), with the (real) multiplicity of the eigenvalues \( \pm(k + 1) \) being \( 4(k + 1) \). Regarding harmonic spinors on the sphere, the Lichnerowicz formula yields the following celebrated result.
\begin{theorem}[Corollary 8.9, \cite{MR1031992}]\label{Lich}
	Any compact spin manifold of positive scalar curvature admits no harmonic spinors. In fact, the same conclusion holds if the scalar curvature is $\geq$ 0 and $>$ 0 at some points.
\end{theorem}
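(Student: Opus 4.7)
My plan is to invoke the classical Schr\"odinger--Lichnerowicz (Weitzenb\"ock) identity for the Dirac operator on a spin manifold $M$,
$$\slashiii{D}^2 \;=\; \nabla^{*}\nabla \;+\; \frac{R}{4},$$
where $R$ denotes the scalar curvature and $\nabla^{*}\nabla$ is the connection Laplacian on $\mathcal{S}M$. The derivation of this identity is a purely local algebraic computation: expanding $\slashiii{D}^2 \psi = \sum_{i,j} e_i \cdot \nabla_{e_i}(e_j \cdot \nabla_{e_j}\psi)$ in a synchronous orthonormal frame and separating the diagonal and off-diagonal contributions via the Clifford relations $e_i \cdot e_j + e_j \cdot e_i = -2\delta_{ij}$, one reduces the off-diagonal part to the spinorial curvature operator $\tfrac{1}{2}\sum_{i<j} e_i \cdot e_j \cdot R^{\mathcal{S}}(e_i, e_j)\psi$, which by the first Bianchi identity together with the standard translation of the spin curvature into the Riemann tensor collapses to $\tfrac{R}{4}\psi$.

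Granting this identity, I would then prove the vanishing theorem as follows. Let $\psi$ be a harmonic spinor, $\slashiii{D}\psi = 0$. Pairing $\slashiii{D}^2 \psi = 0$ with $\psi$ in $L^2(\mathcal{S}M)$ and using the self-adjointness of $\nabla^{*}\nabla$ on the closed manifold $M$ gives
$$0 \;=\; \int_M \langle \slashiii{D}^2 \psi, \psi \rangle\, {\text{d}}v \;=\; \int_M \Bigl( |\nabla \psi|^2 + \tfrac{R}{4}\,|\psi|^2 \Bigr)\, {\text{d}}v.$$
Since $R \geq 0$ on $M$, both summands on the right are pointwise nonnegative; their vanishing therefore forces $\nabla \psi \equiv 0$ on $M$ as well as $R\,|\psi|^2 \equiv 0$ on $M$. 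Parallelism of $\psi$ immediately implies that $|\psi|$ is constant on $M$.

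Finally, the dichotomy in the hypotheses is handled at once: if $R(x_0) > 0$ at some point $x_0 \in M$, then the pointwise relation $R(x_0)\,|\psi(x_0)|^2 = 0$ yields $|\psi(x_0)| = 0$, and the constancy of $|\psi|$ propagates this to $\psi \equiv 0$ on all of $M$; in particular the original statement with $R > 0$ everywhere is a special case. I expect the main obstacle to be entirely algebraic, namely verifying that the curvature term in the Weitzenb\"ock expansion simplifies to exactly $R/4$ (as opposed to, say, a more general endomorphism of $\mathcal{S}M$); once that identity is in hand, the remaining analysis--integration by parts on a closed manifold and a sign argument--is routine. On $(\mathbb{S}^2, g_0)$ one has $R \equiv 2 > 0$, so the theorem applies directly and there are no harmonic spinors, which is what is used to guarantee $0 \notin \operatorname{Spec}(\slashiii{D}_{g_0})$ and justify the decomposition (\ref{2.2}) throughout the paper.
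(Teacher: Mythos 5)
The paper does not prove this statement; it cites it directly as Corollary 8.9 of Lawson--Michelsohn \cite{MR1031992}, where it is proved exactly as you describe: the Schr\"odinger--Lichnerowicz identity $\slashiii{D}^2=\nabla^*\nabla+\tfrac{R}{4}$, integration by parts on the closed manifold, and the observation that $\nabla\psi\equiv 0$ forces $|\psi|$ to be constant so that positivity of $R$ at a single point kills $\psi$ everywhere. Your argument is correct and coincides with the standard (and the cited) proof.
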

In particular, there are no harmonic spinor fields on \( \left( \mathbb{S}^2,g_0 \right) \) hence
${h^0} = \dim {\text{ker}}(\slashiii{D}_{g_0}) = 0.$	And the first positive eigenvalue is \( 1 \), with the corresponding eigen-spinors given by Killing spinors of constant length. According to Friedrich \cite{MR1653146}, the existence of such spinors with constant length is related to the embedding of the surface into three-dimensional Euclidean space. For further details, we refer to \cite{ammann2003variational,MR1777332,MR2509837,MR1031992}.
\section{Conformal transformation and an obstruction}
Let \(Aut(\mathbb{S}^2)\) denote the group of conformal diffeomorphisms from \( \mathbb{S}^2 \) to \( \mathbb{S}^2 \), then there is \( Aut(\mathbb{S}^2) = PSL(2,\mathbb{C}) \) on the Riemann surface $\mathbb{S}^2=\mathbb{C}\cup \{\infty\}$.  Consider a conformal mapping \( \phi \in Aut(\mathbb{S}^2) \), then we have \( {\phi ^*}{g_0} = \det (d\phi) {g_0} \). Let $v=\frac{1}{2}\log \det \left(d \phi\right) $, then \( {\phi ^*}{g_0} = e^{2v} {g_0} \). We employ the notation introduced in Section 2.2. Under the transformation, we have the following commutative diagram
\[\begin{array}{*{20}{c}}
	{({\mathcal{S}_{g_0}}\Sigma ,h)}&{\xrightarrow{\beta }}&{({\mathcal{S}_{{\phi ^*}g_0}}\Sigma ,{\phi ^*}h)}&{\xrightarrow{\Phi }}&{({\mathcal{S}_{g_0}}\Sigma ,h)} \\
	\downarrow &{}& \downarrow &{}& \downarrow  \\
	{(\Sigma ,{g_0})}&{\xrightarrow{{{\text{id}}}}}&{(\Sigma ,{\phi ^*}{g_0} = {e^{2v}}{g_0})}&{\xrightarrow{\phi }}&{(\Sigma ,{g_0})}
\end{array}\]
For the pair \((u, \psi)\) in equation (\ref{key-Sphere}), we perform the following transformation
\begin{equation}\label{trans}
	\begin{cases}
		{{u_\phi }\left( x \right): = u \circ \phi \left( x \right) + v\left( x \right),} \\
		{{\psi _\phi }\left( x \right): = {\beta ^{ - 1}}\left( {{e^{\frac{1}{2}v\left( x \right)}}{\Phi ^{ - 1}} \circ \psi  \circ \phi \left( x \right)} \right),}
	\end{cases}
\end{equation}
then by (\ref{2.22}) and (\ref{2.23}) we have
\[\begin{aligned}
	- {\Delta _{{g_0}}}{u_\phi } =&  - {\Delta _{{g_0}}}u \circ \phi  - {\Delta _{{g_0}\left( x \right)}}v
	=  - {e^{2v}}{\Delta _{{\phi ^*}{g_0}}}\left( {{\phi ^*}u} \right) + {e^{2v}} - 1  \\
	= &- {e^{2v}}\left( {{\Delta _{{g_0}}}u} \right) \circ \phi  + {e^{2v}} - 1
	= {e^{2v}}\left( {h_1{e^{2u}} - 1 + h_2{e^u}\left| \psi  \right|_h^2} \right) \circ \phi  + {e^{2v}} - 1  \\
	=& \left( {h_1 \circ \phi } \right){e^{2u \circ \phi  + 2v}} - 1 +\left(h_2\circ \phi\right) {e^{u \circ \phi  + v}}{e^v}\left| {{\Phi ^{ - 1}}\psi } \right|_h^2 \\
	=& \left( {h_1 \circ \phi } \right){e^{2{u_\phi }}} - 1 + \left(h_2\circ \phi\right){e^{{u_\phi }}}\left| {{e^{\frac{v}{2}}}{\beta ^{ - 1}}{\Phi ^{ - 1}}\psi } \right|_h^2  \\
	= &\left( {h_1 \circ \phi } \right){e^{2{u_\phi }}} - 1 +\left(h_2\circ \phi\right) {e^{{u_\phi }}}\left| {{\beta ^{ - 1}}{e^{\frac{v}{2}}}{\Phi ^{ - 1}}\psi } \right|_h^2  \\
	= &\left( {h_1 \circ \phi } \right){e^{2{u_\phi }}} - 1 +\left(h_2\circ \phi\right) {e^{{u_\phi }}}{\left| {{\psi _\phi }} \right|^2} ,
\end{aligned} \]
and
\[\begin{aligned}
	{\slashiii{D}_{{g_0}}}{\psi _\phi }(x) =& {\slashiii{D}_{{g_0}}}\left( {{\beta ^{ - 1}}{e^{\frac{1}{2}v}}{\Phi ^{ - 1}}\left( {\psi  \circ \phi } \right)} \right)  \\
	=& {\beta ^{ - 1}}{e^{\frac{3}{2}v}}{\slashiii{D}_{{\phi ^*}{g_0}}}{e^{ - \frac{1}{2}v}}\beta \left( {{\beta ^{ - 1}}{e^{\frac{1}{2}v}}\left( {{\Phi ^{ - 1}}\left( {\psi  \circ \phi } \right)} \right)} \right)  \\
	=& {\beta ^{ - 1}}{e^{\frac{3}{2}v}}{\slashiii{D}_{{\phi ^*}{g_0}}}{\phi ^*}\psi  = {e^{\frac{3}{2}v}}{\beta ^{ - 1}}{\Phi ^{ - 1}}\left( {\left( {{\slashiii{D}_{{g_0}}}\psi } \right) \circ \phi } \right)  \\
	=& {e^{\frac{3}{2}v}}{\beta ^{ - 1}}\left( {{\Phi ^{ - 1}}\left( {h_2{e^u}\psi } \right) \circ \phi } \right)  = \left(h_2\circ \phi\right){e^{u \circ \phi  + v}}{\beta ^{ - 1}}{e^{\frac{1}{2}v}}{\Phi ^{ - 1}}\left( {\psi  \circ \phi } \right)  \\
	=& \left(h_2\circ \phi\right){e^{{u_\phi }}}{\psi _\phi }
\end{aligned} \]
In summary, \((u_{\phi}, \psi_{\phi})\) satisfies \begin{equation}
	\begin{cases}
		{ - \Delta_{g_0} u_\phi = {(h_1\circ\phi)}{e^{2u_\phi}} - 1 + \left(h_2\circ \phi\right)}{e^{u_\phi}{{\left| \psi_\phi  \right|}^2},}&{{\text{on}}\;{\mathbb{S}^2},} \\
		{\slashiii{D}_{g_0}\psi_\phi  =\left(h_2\circ \phi\right)  {e^{u_\phi}}\psi_\phi ,}&{{\text{on}}\;{\mathbb{S}^2}.}
	\end{cases}
\end{equation}
Next, we employ M\"obius transformations on the sphere to derive a Pohozaev-type identity.

\

\noindent\textbf{Proof of Proposition \ref{Kazdan-w}. }We employ the method in \cite{MR1131392,changMR0908146}. First, it is known that $\int_{{\mathbb{S}^2}} \left( |\nabla u|^2 + 2u \right) \text{d}v_{g_0}$ is invariant under M\"obius transformations. A straightforward computation
\[
\begin{aligned}
	{\int_{{\mathbb{S}^2}} {\left\langle {{\slashiii{D}_{{g_0}}}{\psi _\phi },{\psi _\phi }} \right\rangle } _h}{\text{d}{v}_{g_0}}  =& {\int_{{\mathbb{S}^2}} {\left\langle {{e^{\frac{3}{2}v}}{\beta ^{ - 1}}{\Phi ^{ - 1}}\left( {{\slashiii{D}_{{g_0}}}\psi } \right) \circ \phi ,{\beta ^{ - 1}}{e^{\frac{1}{2}v}}{\Phi ^{ - 1}} \circ \psi  \circ \phi } \right\rangle } _h}{e^{ - 2v}}{e^{2v}}{\text{d}{v}_{g_0}}  \\
	=& {\int_{{\mathbb{S}^2}} {\left\langle {{\beta ^{ - 1}}{\Phi ^{ - 1}}\left( {{\slashiii{D}_{{g_0}}}\psi } \right) \circ \phi ,{\beta ^{ - 1}}{\Phi ^{ - 1}} \circ \psi  \circ \phi } \right\rangle } _h}{e^{2v}}{\text{d}{v}_{g_0}}  \\
	=& {\int_{{\mathbb{S}^2}} {\left\langle {{\Phi ^{ - 1}}\left( {{\slashiii{D}_{{g_0}}}\psi } \right) \circ \phi ,{\Phi ^{ - 1}} \circ \psi  \circ \phi } \right\rangle } _h}\text{d}{v_{{\phi ^*}{g_0}}}  \\
	=& {\int_{{\mathbb{S}^2}} {\left\langle {{\slashiii{D}_{{g_0}}}\psi ,\psi } \right\rangle } _h}{\text{d}{v}_{g_0}}
\end{aligned} \]
shows that $\int_{\mathbb{S}^2} \left\langle {\slashiii{D}_{g_0}} \psi, \psi \right\rangle \, \text{d}v_{g_0}$ is also  M\"obius conformally invariant. Let \( \pi_Q \) denote the stereographic projection from $\mathbb{S}^2$ to \( \mathbb{C}^2 \) with \( Q \) as the north pole, and let \( \tau_t \) be a dilation transformation from \( \mathbb{C}^2 \) to \( \mathbb{C}^2 \) with $\tau_t=tz,\ z\in \mathbb{C}^2$. Define ${\phi _{Q,t}} = {\pi _Q}^{ - 1} \circ t \circ {\pi _Q}$, which  is a conformal map from $\mathbb{S}^2$ to $\mathbb{S}^2$. Therefore, we have form that $(u,\psi) $ is a solution of equation (\ref{key-Sphere})
\[\begin{aligned}
	0 =& \left\langle {dE\left( {{u},{\psi }} \right),{{\left. {\frac{d}{{dt}}} \right|}_{t = 1}}\left( {{u_{{\phi _{Q,t}}}},{\psi _{{\phi _{Q,t}}}}} \right)} \right\rangle
	= {\left. {\frac{d}{{dt}}} \right|_{t = 1}}E\left( {{u_{{\phi _{Q,t}}}},{\psi _{{\phi _{Q,t}}}}} \right) \hfill \\
	=& {\left. { - \frac{d}{{dt}}} \right|_{t = 1}}\left( {\int_{{\mathbb{S}^2}} {\left( {{h_1}\circ \phi _{Q,t}^{ - 1}} \right){e^{2{u}}}{\text{d}}v + 2\int_{{\mathbb{S}^2}} {\left( {{h_2}\circ \phi _{Q,t}^{ - 1}} \right){e^{{u}}}{{\left| {{\psi }} \right|}^2}{\text{d}}v} } } \right) \hfill \\
	=&  - \left( {\int_{{\mathbb{S}^2}} {{{\left. {\frac{d}{{dt}}} \right|}_{t = 1}}\left( {{h_1}\circ \phi _{Q,t}^{ - 1}} \right){e^{2{u}}}{\text{d}}v}  + 2\int_{{\mathbb{S}^2}} {\frac{d}{{dt}}\left( {{h_2}\circ \phi _{Q,t}^{ - 1}} \right){e^{{u}}}{{\left| {{\psi }} \right|}^2}{\text{d}}v} } \right) \hfill \\
	=&  - \int_{{\mathbb{S}^2}} {\left\langle {\nabla {h_1},\nabla \left\langle {\vec x \cdot Q} \right\rangle } \right\rangle {e^{2{u}}}{\text{d}}v}  - 2\int_{{\mathbb{S}^2}} {\left\langle {\nabla {h_2},\nabla \left\langle {\vec x \cdot Q} \right\rangle } \right\rangle {e^{{u}}}{{\left| {{\psi }} \right|}^2}{\text{d}}v} .
\end{aligned} \]
The proof is completed by taking \( Q = e_1, e_2, e_3 \).\qed
\section{Bounds and estimates for compactness}
In this section, we primarily perform estimates on the solutions and utilize them to establish the compactness results.

\

\noindent\textbf{Proof of Theorem \ref{thm-principle}.} Notice that the weak solutions of (\ref{general}) are smooth. Let \( (u,\psi) \) be a solution of the equation. Under the following transformation
\[
g_u = e^{2u}g,\qquad \tilde{\psi} = e^{-u/2}\beta\left( \psi\right) ,
\]
equation (\ref{general}) transforms into
\begin{equation}\label{conformal-general}
	\begin{cases}
		K_{g_u} = h_1 + h_2 \, \left| {\tilde \psi } \right|^2, & \text{on } M, \\[4pt]
		\slashiii{D}_{g_u} \tilde{\psi} = h_2 \tilde{\psi}, & \text{on } M.
	\end{cases}
\end{equation}
In dimension two we have the following Lichnerowicz formula from (\ref{Lichnerowicz})
\begin{equation}\label{Lichnerowicz-Gauss}
	\slashiii{D}_{g_u}^2 \tilde{\psi} = \nabla_{g_u}^*\nabla_{g_u}\tilde{\psi} + \frac{K_{g_u}}{2}\tilde{\psi}.
\end{equation}
Substituting the Dirac equation \( \slashiii{D}_{g_u}\tilde{\psi}=h_2\tilde{\psi} \) into (\ref{Lichnerowicz-Gauss}) gives
\[
\nabla_{g_u}^*\nabla_{g_u}\tilde{\psi}
= \slashiii{D}_{g_u}(h_2\tilde{\psi}) - \frac{K_{g_u}}{2}\tilde{\psi}
= h_2^2\tilde{\psi} + (\nabla_{g_u}h_2)\cdot\tilde{\psi} - \frac{K_{g_u}}{2}\tilde{\psi}.
\]
Using the expression for \( K_{g_u} \) from (\ref{conformal-general}), we obtain
\begin{equation}\label{Laplace-spin}
	-\Delta_{g_u}\tilde{\psi} + \frac{h_1+h_2\left| {\tilde \psi } \right|^2}{2}\,\tilde{\psi}
	= h_2^2\tilde{\psi} + (\nabla_{g_u}h_2)\cdot\tilde{\psi}.
\end{equation}

Define
\[
f := \frac12 \left| {\tilde \psi } \right|^2.
\]
Applying the Laplacian to \( f \) and using (\ref{Laplace-spin}), we have
\begin{equation}\label{struc-control}
	\begin{aligned}
		-\Delta_{g_u} f
		&= \left\langle { -\Delta_{g_u}\tilde{\psi},\,\tilde{\psi}} \right\rangle
		- |\nabla_{g_u}\tilde{\psi}|^2 \\
		&\le \left\langle { -\Delta_{g_u}\tilde{\psi},\,\tilde{\psi}} \right\rangle  \\
		&= \left\langle
		-\frac{h_1+h_2\left| {\tilde \psi } \right|^2}{2}\tilde{\psi}
		+ h_2^2\tilde{\psi} + (\nabla_{g_u}h_2)\cdot\tilde{\psi},
		\;\tilde{\psi}\right\rangle .
	\end{aligned}
\end{equation}
Using the Clifford relation (\ref{Clifford}) and its compatibility with the inner product (\ref{Clofford-inner-product}), we obtain
\[\begin{aligned}
	{\left\langle {X \cdot \psi ,\psi } \right\rangle _h} =& \frac{1}{{g\left( {X,X} \right)}}{\left\langle {X \cdot X\psi ,X \cdot \psi } \right\rangle _h} \hfill \\
	=& \frac{1}{{g\left( {X,X} \right)}}{\left\langle { - g\left( {X,X} \right)\psi ,X \cdot \psi } \right\rangle _h} \hfill \\
	=&  - {\left\langle {\psi ,X \cdot \psi } \right\rangle _h} .
\end{aligned} \]
Therefore $\left\langle {\left( {{\nabla _{{g_u}}}{h_2}} \right) \cdot \psi ,\psi } \right\rangle  = 0$ and
\begin{equation}\label{laplace}
	- {\Delta _{{g_u}}}f + \left( {{h_1} + {h_2}{{\left| {\tilde \psi } \right|}^2}} \right)f \leq 2h_2^2f.
\end{equation}

Let \(x_0\in M\) be a point where \(f\) attains its maximum. At such a point we necessarily have
\[
-\Delta_{g_u} f\left( {{x_0}} \right) \ge 0.
\]
Insert this inequality into (\ref{laplace}) evaluated at \(x_0\) to get
\[
\left( {{h_1}\left( {{x_0}} \right) + {h_2}\left( {{x_0}} \right){{\left| {\tilde \psi \left( {{x_0}} \right)} \right|}^2}} \right)f\left( {{x_0}} \right) \leq 2 {h_2^2\left( {{x_0}} \right)} f\left( {{x_0}} \right).
\]
Since \(f\left( {{x_0}} \right)>0\) unless \(\tilde{\psi}\equiv0\), we may divide by \(f\left( {{x_0}} \right)\) and rearrange to obtain
\[
h_1\left( {{x_0}} \right)+h_2\left( {{x_0}} \right){\left| {\tilde \psi \left( {{x_0}} \right)} \right|}^2
\le 2h_2^2\left( {{x_0}} \right).
\]
Consequently,
\begin{equation}\label{change-control}
	{\left| {\tilde \psi \left( {{x}} \right)} \right|}^2\le {\left| {\tilde \psi \left( {{x_0}} \right)} \right|}^2
	\le \frac{2h_2^2\left( {{x_0}} \right)-h_1\left( {{x_0}} \right)}{h_2\left( {{x_0}} \right)}.
\end{equation}
Recalling that \(\left| {\tilde \psi } \right|^2 = e^{-u}\left| { \psi } \right|^2\), inequality (\ref{change-control}) implies
\begin{equation}\label{orig-control}
	\left| { \psi } \right|^2 \le
	\Biggl\{\frac{2h_2^2\left( {{x_0}} \right)-h_1\left( {{x_0}} \right)}{h_2\left( {{x_0}} \right)}\Biggr\}\,e^{u}.
\end{equation}
Thus the spinor \(\psi\) is pointwise bounded by an expression involving \(u\) and the values of \(h_1\) and \(h_2\). This yields the desired control of \(\psi\) by the scalar function \(u\).\qed
\begin{remark}
	Recall the Sobolev embedding theorem (\ref{embedding}) for spinors, where \(q=4\) is the critical integrable exponent. By Theorem \ref{thm-principle}, \(\|\psi\|_{L^4}\) is controlled by \(\int_M e^{2u}{\text{d}{v}}\). When \(M\) is the sphere \(\mathbb{S}^2\), as can be seen from the following computation, the quantity \(\int_{\mathbb{S}^2} e^{2u}{\text{d}{v}}\) plays a particularly important role in the estimates.
\end{remark}

Next we employ the spectral properties of the Dirac operator to derive estimates for solutions to equation (\ref{general}) on the sphere $\mathbb{S}^2$, with particular attention to the positive and negative parts of the spinor separately, thereby obtaining a uniform Sobolev bound for the spinor component of solutions.

\

\noindent\textbf{Proof of Theorem \ref{thm1}.}  Integrate both sides of the first equation
\begin{equation*}
	{ - {\Delta _g}u = h_1(x){e^{2u}} - {K_g} + h_2(x){e^u}{{\left| \psi  \right|}^2}}
\end{equation*}
of (\ref{general}), and apply Gauss-Bonnet formula to yield
\begin{equation}\label{stokes}
	\int_{{\mathbb{S}^2}} {\left( {{h_1}{e^{2u}} + h_2{e^{u}}{{\left| \psi  \right|}^2}} \right)} {\text{d}{v}} = 4\pi .
\end{equation}
Therefore we have from $h_1>0$
\begin{equation}\label{ou-he}
	0 \leq \int_{{\mathbb{S}^2}} h_2{{e^u}} {\left| \psi  \right|^2}{\text{d}}v \leq 4\pi,\quad \int_{{\mathbb{S}^2}} {{e^{2u}}}  {\text{d}}v < C\left(h_1 \right) .
\end{equation}
Multiply both sides of the second equation
\begin{equation*}
	\slashiii{D}_g\psi  = h_2{e^{u}}\psi.
\end{equation*}
by the spinor $\psi$, and then take integration to obtain
\begin{equation}\label{nehari}
	\int_{{\mathbb{S}^2}} {\left\langle {\slashiii{D}_g\psi ,\psi } \right\rangle } {\text{d}{v}} = \int_{{\mathbb{S}^2}}h_2 {{e^{u}}{{\left| \psi  \right|}^2}} {\text{d}{v}}.
\end{equation}
Since there are no harmonic spinors on the sphere, the spectral decomposition allows us to write \( \psi = \psi^+ + \psi^- \). From (\ref{spin-eqv-norm1}) and (\ref{spin-eqv-norm2}), it follows that there exist positive constants $A_1$ and $A_2$ such that
\begin{equation}\label{pos-spin}
	{A_1}\left\| {{\psi ^ + }} \right\|_{{H^{1/2}}}^2 \leq \int_{{{\mathbb{S}^2}}} {\left\langle {\slashiii{D}_g{\psi ^ + },{\psi ^ + }} \right\rangle } {\text{d}}v \leq {A_2}\left\| {{\psi ^ + }} \right\|_{{H^{1/2}}}^2.
\end{equation}
Similarly, there exist positive constants \( B_1 \) and \( B_2 \) such that
\begin{equation}\label{neg-spin}
	-{B_1}\left\| {{\psi ^ - }} \right\|_{{H^{1/2}}}^2 \leq \int_{{{\mathbb{S}^2}}} {\left\langle {\slashiii{D}_g{\psi ^ - },{\psi ^ - }} \right\rangle } {\text{d}}v \leq {-B_2}\left\| {{\psi ^ - }} \right\|_{{H^{1/2}}}^2.
\end{equation}
Then by (\ref{ou-he}) and (\ref{nehari}) we have
\[\begin{aligned}
	{A_1}\left\| {{\psi ^ + }} \right\|_{{H^{1/2}}}^2 - {B_1}\left\| {{\psi ^ - }} \right\|_{{H^{1/2}}}^2
	&\leq \int_{{{\mathbb{S}^2}}} {\left\langle {\slashiii{D}_g{\psi ^ + },{\psi ^ + }} \right\rangle } {\text{d}}v + \int_{{{\mathbb{S}^2}}} {\left\langle {\slashiii{D}_g{\psi ^ - },{\psi ^ - }} \right\rangle } {\text{d}}v  \\
	&= \int_{{\mathbb{S}^2}} {\left\langle {\slashiii{D}_g\psi ,\psi } \right\rangle } {\text{d}}v = \int_{{\mathbb{S}^2}} h_2{{e^u}} {\left| \psi  \right|^2}{\text{d}}v \leq 4\pi,
\end{aligned}
\]
\[\begin{aligned}
	0 \leq \int_{{\mathbb{S}^2}} h_2{{e^u}} {\left| \psi  \right|^2}{\text{d}}v = \int_{{\mathbb{S}^2}} {\left\langle {\slashiii{D}_g\psi ,\psi } \right\rangle } {\text{d}}v &= \int_{{{\mathbb{S}^2}}} {\left\langle {\slashiii{D}_g{\psi ^ + },{\psi ^ + }} \right\rangle } {\text{d}}v + \int_{{{\mathbb{S}^2}}} {\left\langle {\slashiii{D}_g{\psi ^ - },{\psi ^ - }} \right\rangle } {\text{d}}v  \\
	&\leq {A_2}\left\| {{\psi ^ + }} \right\|_{{H^{1/2}}}^2 - {B_2}\left\| {{\psi ^ - }} \right\|_{{H^{1/2}}}^2.
\end{aligned} \]
Therefore,
\begin{equation}\label{pos-neg}
	B_2{\left\| {{\psi ^ - }} \right\|^2_{{H^{1/2}}}} \leq A_2{\left\| {{\psi ^ + }} \right\|^2_{{H^{1/2}}}} \leq \frac{4\pi A_2}{A_1}  + \frac{A_2B_1}{A_1}{\left\| {{\psi ^ - }} \right\|^2_{{H^{1/2}}}}.
\end{equation}
Multiply both sides of the second equation of (\ref{general}) by the spinor $\psi^+$, and then take integration to obtain
\begin{equation}\label{nonlinear}
	\begin{aligned}
		A_1{\left\| {{\psi ^ + }} \right\|^2_{{H^{1/2}}}}  &\leq \int_{{{\mathbb{S}^2}}} {\left\langle {\slashiii{D}_g\psi ,{\psi ^ + }} \right\rangle } {\text{d}}v = \int_{{\mathbb{S}^2}}h_2 {{e^{u}}\left\langle {\psi ,{\psi ^ + }} \right\rangle } {\text{d}{v}} \\
		&\leq\int_{{\mathbb{S}^2}}h_2 {{e^{u}}\left| \psi  \right|\left| {{\psi ^ + }} \right|} {\text{d}{v}} \\
		&\leq {\left( {\int_{{\mathbb{S}^2}} h_2{{e^{u}}} {{\left| {{\psi ^ + }} \right|}^2}{\text{d}{v}}} \right)^{\frac{1}{2}}}{\left( {\int_{{\mathbb{S}^2}}h_2 {{e^{u}}{{\left| \psi  \right|}^2}} {\text{d}{v}}} \right)^{\frac{1}{2}}} \\
		&\leq C\left( h_2\right) {\left( {\int_{{\mathbb{S}^2}} {{e^{2u}}} {\text{d}{v}}} \right)^{\frac{1}{4}}}{\left( {\int_{{\mathbb{S}^2}} {{{\left| {{\psi ^ + }} \right|}^4}{\text{d}{v}}} } \right)^{\frac{1}{4}}}{\left( {\int_{{\mathbb{S}^2}}h_2 {{e^{u}}{{\left| \psi  \right|}^2}} {\text{d}{v}}} \right)^{\frac{1}{2}}} \\
		&\leq C(h_1,h_2)\left\| {{\psi ^ + }} \right\|_{{H^{1/2}}}.
	\end{aligned}
\end{equation}
The last inequality follows from (\ref{ou-he}) and the Sobolev embedding inequality. As a result, we have
\begin{equation*}
	{\left\| \psi  \right\|^2_{{H^{1/2}}}}=\left\| {{\psi ^ + }} \right\|_{{H^{1/2}}}^2 + \left\| {{\psi ^ - }} \right\|_{{H^{1/2}}}^2 \leq C.
\end{equation*}
Thus, we have completed the proof.\qed

\

\noindent\textbf{Proof of Theorem \ref{DE-th}.} Given $R_g>0$, we utilize the equivalent norm of ${H^1(\mathbb{S}^3)}$, which is
\[
\|u\|_{H^1(\mathbb{S}^3)}^2 := \int_{\mathbb{S}^3} |\nabla u|^2 \, \text{d}v_g + \int_{\mathbb{S}^3} \frac{R_g}{8} u^2 \, \text{d}v_g.
\]
Testing the first equation in \eqref{DE} by $u$ yields the identity
\begin{equation}\label{DE_first_eq}
	\int_{\mathbb{S}^3} \left( |\nabla_g u|^2 + \frac{R_g}{8} u^2 \right)\text{d}v_g = \|u\|_{H^1}^2 = \int_{\mathbb{S}^3} K u^2 |\psi|^2\text{d}v_g,
\end{equation}
which directly implies that
\begin{equation}\label{3-coup}
	\int_{\mathbb{S}^3} u^2 |\psi|^2 \, \text{d}v_g \leq C \|u\|_{H^1}^2.
\end{equation}
Next, testing the Dirac equation with $\psi$ we obtain
\[ \int_{{\mathbb{S}^3}} {\left\langle {{\slashiii{D}_g}\psi ,\psi } \right\rangle } {\mkern 1mu} {\text{d}}v= \int_{\mathbb{S}^3} K u^2 |\psi|^2 \text{d}v_g= \|u\|_{H^1}^2. \]
By decomposing the spinor $\psi$ into its positive and negative spectral parts, $\psi = \psi^+ + \psi^-$, we obtain the following control from \eqref{DE_first_eq} and (\ref{pos-spin})
\begin{equation}\label{u-psi}
	\left\| u \right\|_{{H^1}}^2 \leq C\left\| {{\psi ^ + }} \right\|_{{H^{1/2}}}^2,
\end{equation}
and
\begin{equation}\label{psi-psi}
	\left\| {{\psi ^ - }} \right\|_{{H^{1/2}}}^2 \leq C\left\| {{\psi ^ + }} \right\|_{{H^{1/2}}}^2.
\end{equation}			
To establish the reverse inequality, we test the Dirac equation with $\psi^+$:
\[
\begin{aligned}
	C \|\psi^+\|^2_{H^{1/2}} &= \int_{\mathbb{S}^3} K u^2 \langle \psi, \psi^+ \rangle \text{d}v_g\leq C \int_{\mathbb{S}^3} u^2 |\psi| |\psi^+| \text{d}v_g\\
	&\leq C \left( \int_{\mathbb{S}^3} u^2 |\psi|^2 \text{d}v_g\right)^{1/2} \left( \int_{\mathbb{S}^3} u^2 |\psi^+|^2\text{d}v_g \right)^{1/2}.
\end{aligned}
\]
Invoking the Sobolev embeddings $H^1(\mathbb{S}^3) \hookrightarrow L^6(\mathbb{S}^3)$ and $H^{1/2}(\Sigma\mathbb{S}^3) \hookrightarrow L^3(\Sigma\mathbb{S}^3)$, we estimate the last term
\[ \left( \int_{\mathbb{S}^3} u^2 |\psi^+|^2 \text{d}v_g\right)^{1/2} \leq \|u\|_{L^6} \|\psi^+\|_{L^3} \leq C \|u\|_{H^1} \|\psi^+\|_{H^{1/2}}. \]
Substituting this estimate and \eqref{3-coup} back into the inequality, we find:
\[ \|\psi^+\|_{H^{1/2}}^2 \leq C \|u\|_{H^1} \cdot \|u\|_{H^1} \|\psi^+\|_{H^{1/2}} = C \|u\|_{H^1}^2 \|\psi^+\|_{H^{1/2}}, \]
which implies $\|\psi^+\|_{H^{1/2}} \leq C \|u\|_{H^1}^2$. Combining this with \eqref{psi-psi}, we deduce
\[{\left\| \psi  \right\|_{{H^{1/2}}}} \leq C\left\| u \right\|_{{H^1}}^2.\]
Furthermore, as $u\not \equiv 0$ and there are no harmonic spinors, from \eqref{u-psi} we obtain
\[{\left\| \psi  \right\|_{{H^{1/2}}}} \geq C\quad and\quad {\left\| u \right\|_{{H^1}}} \geq C.\]
Finally, by HÃ¶lder's inequality and the Sobolev embedding:
\[ C\leq\|u\|_{H^1}^2 = \int_{\mathbb{S}^3} K u^2 |\psi|^2 \text{d}v_g\leq C \|u\|_{L^6}^2 \|\psi\|_{L^3}^2 \leq C \|u\|_{H^1}^2 \|\psi\|_{H^{1/2}}^2. \]
This concludes the proof, establishing that $\frac{C}{\|u\|_{H^1}^2} \leq \|\psi\|_{H^{1/2}}^2 \leq C \|u\|_{H^1}^4$.
\section{Compactness results under certain conditions}
In this section, we establish compactness results for the super-Liouville equation under certain assumptions. A key feature of the equation is the coupling term ${e^u}{\left| \psi \right|^2}$, which critically links the scalar and spinor components, a fact that is also reflected in the detailed calculations and estimates. For clarity in the subsequent discussion, we provide here the relevant indices associated with this coupling term.
\begin{lemma}\label{holder}
	Under the condition of Theorem \ref{thm1}, if $\left( u,\psi \right) $ is a solution to (\ref{general}) and ${\left\| {{e^{u}}} \right\|_{{L^\alpha }\left( {{\mathbb{S}^2}} \right)}} \leq C,\;\alpha  > 1$, then there exists a $C'$ such that for any $0\leq \beta  \leq \frac{{2\alpha }}{{ \alpha+2 }},$
	\[
	{\left\| {{e^{u}}{{\left| \psi  \right|}^2}} \right\|_{{L^\beta }\left( {{\mathbb{S}^2}} \right)}} \leq C'.
	\]
	In particular, since we have already known \( \int_{{\mathbb{S}^2}} {{e^{2u}}} \mathrm{d}v \leq \frac{4\pi}{{{h_{1,\min }}}} \), then we can take $\alpha=2$ to get $\beta=1$, i.e. we have \( \int_{{\mathbb{S}^2}} {{e^{u}}{{\left| \psi  \right|}^2}}{\mathrm{d}v}  \leq C \).
\end{lemma}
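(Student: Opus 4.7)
The plan is to run a two-factor Hölder estimate on the coupling term, using the given $L^\alpha$ bound on $e^u$ for one factor and the universal $H^{1/2}$ bound on $\psi$ from Theorem \ref{thm1} (combined with the Sobolev embedding \eqref{embedding}) for the other. Concretely, for exponents $p,q\geq 1$ with $1/p+1/q=1$ I would write
\[
\int_{\mathbb{S}^2}(e^u|\psi|^2)^{\beta}\,\mathrm{d}v = \int_{\mathbb{S}^2}e^{\beta u}|\psi|^{2\beta}\,\mathrm{d}v \leq \bigl\|e^{\beta u}\bigr\|_{L^p}\bigl\||\psi|^{2\beta}\bigr\|_{L^q} = \|e^u\|_{L^{\beta p}}^{\beta}\,\|\psi\|_{L^{2\beta q}}^{2\beta}.
\]
The first factor is controlled by the hypothesis provided $\beta p\leq \alpha$, while the second factor is controlled via Theorem \ref{thm1} and \eqref{embedding} provided $2\beta q\leq 4$, i.e.\ $\beta q\leq 2$.

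The remaining step is an elementary optimization: the constraint $1/p+1/q=1$ together with $1/p\geq \beta/\alpha$ and $1/q\geq \beta/2$ forces
\[
1 \geq \frac{\beta}{\alpha}+\frac{\beta}{2} = \beta\cdot\frac{\alpha+2}{2\alpha},
\]
which is exactly the admissible range $\beta\leq \frac{2\alpha}{\alpha+2}$ stated in the lemma. Choosing $p=\alpha/\beta$ and $q$ determined by $1/p+1/q=1$ at the boundary (and any larger $p$, smaller $q$ in the interior) yields the desired bound with $C'$ depending on $C$, $\|\psi\|_{H^{1/2}}$ and the Sobolev constant.

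For the ``in particular'' clause, I would first integrate the scalar equation in \eqref{general} and apply Gauss--Bonnet to recover the identity \eqref{stokes}, which gives
\[
h_{1,\min}\int_{\mathbb{S}^2}e^{2u}\,\mathrm{d}v \leq \int_{\mathbb{S}^2}h_1 e^{2u}\,\mathrm{d}v \leq 4\pi,
\]
so $\|e^u\|_{L^2(\mathbb{S}^2)}^2\leq 4\pi/h_{1,\min}$, i.e.\ the hypothesis holds with $\alpha=2$. Plugging $\alpha=2$ into the main estimate gives the admissible endpoint $\beta=1$, and hence $\int_{\mathbb{S}^2}e^u|\psi|^2\,\mathrm{d}v\leq C$, as required.

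I do not expect a serious obstacle here: the statement is really a bookkeeping consequence of Hölder plus the two a priori bounds already established (the $L^\alpha$ bound on $e^u$ by hypothesis, and the uniform $H^{1/2}$ bound on $\psi$ from Theorem \ref{thm1}). The only point worth flagging is that the Sobolev embedding $H^{1/2}(\mathcal{S}\mathbb{S}^2)\hookrightarrow L^r$ is available for all $r\leq 4$, which is precisely what makes the endpoint $\beta=\tfrac{2\alpha}{\alpha+2}$ attainable rather than strict.
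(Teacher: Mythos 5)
Your proof is correct and takes essentially the same approach as the paper: a two-factor Hölder estimate on $\int (e^u|\psi|^2)^\beta$, with the $L^\alpha$ bound on $e^u$ controlling one factor and the $L^4$ bound on $\psi$ (from Theorem~\ref{thm1} plus the embedding $H^{1/2}\hookrightarrow L^4$) controlling the other; the paper simply fixes the endpoint $p=\frac{\alpha+2}{2}$, $q=\frac{\alpha+2}{\alpha}$ whereas you spell out the range constraint $1/p+1/q=1$ together with $\beta p\leq\alpha$, $2\beta q\leq 4$. Your derivation of the ``in particular'' clause from Gauss--Bonnet and $h_1>0$ is also correct (the paper's displayed bound $\int e^{2u}\,\mathrm{d}v\leq 1/h_{\min}$ appears to be missing the $4\pi$ factor; your $4\pi/h_{1,\min}$ is the right constant).
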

\begin{proof}
	According to the embedding theorem of spinors, the spinor parts of all solutions are bounded in $L ^ 4(\mathbb{S}^2) $:\[{\left\| \psi  \right\|_{{L^4({\mathbb{S}^2})}}} \leq C.\]
	Let $p = \frac{{2 + \alpha }}{2}$ and $q = \frac{{2 + \alpha }}{\alpha }$. By H{\"o}lder's inequality we have
	\[\begin{aligned}
		\int_{{\mathbb{S}^2}} {{{\left( {{e^{u}}{{\left| \psi  \right|}^2}} \right)}^{\frac{{2\alpha }}{{2 + \alpha }}}}}{\text{d}{v}}  &\leq {\left( {\int_{{\mathbb{S}^2}} {{e^{\left( {\frac{{2\alpha }}{{2 + \alpha }}} \right)pu}}} }{\text{d}{v}} \right)^{1/p}}{\left( {\int_{{\mathbb{S}^2}} {{{\left| \psi  \right|}^{2\left( {\frac{{2\alpha }}{{2 + \alpha }}}\right)q}}} {\text{d}{v}}} \right)^{^{1/q}}}  \\
		&\leq {\left( {\int_{{\mathbb{S}^2}} {{e^{\alpha u}}} } {\text{d}{v}}\right)^{1/p}}{\left( {\int_{{\mathbb{S}^2}} {{{\left| \psi  \right|}^4}} } {\text{d}{v}}\right)^{1/q}} \leq C  .
	\end{aligned} \]
\end{proof}
\begin{remark}
	For the linear Dirac equation for spinors:
	\[
	\slashiii{D}\psi = \eta, \quad \text{on } \mathbb{S}^2,
	\]
	the space \(L^1\) remains a critical integrable space. Consequently, it is not possible to obtain further estimates via a bootstrap argument, see \cite[Lemma3.2.2, Theorem 3.2.1]{ammann2003variational} for details.
\end{remark}

Next, we prove Theorem \ref{thm1.3}, i.e. show the compactness of the solutions space of (\ref{key-Sphere}) under the small energy  condition.  Recall that $(u_n,\psi_n)$  satisfies equation (\ref{jinsi}) as  follows:
\begin{equation*}
	\begin{cases}
		{ - \Delta u_n = {h_{1,n}}(x){e^{2u_n}} - 1 + {h_{2,n}}{e^{u_n }}{{\left| \psi_n   \right|}^2},}&{{\text{on}}\;{\mathbb{S}^2},} \\
		{\slashiii{D}\psi_n   ={h_{2,n}} {e^{u_n}}\psi_n  ,}&{{\text{on}}\;{\mathbb{S}^2}.}
	\end{cases}
\end{equation*}

\

\noindent\textbf{Proof of Theorem \ref{thm1.3}.} We use the method in the proof of Proposition 1.4 by Brendle \cite{MR1999924}.
Firstly, by Jensen's inequality and Stokes' theorem, we have
\[
4\pi m_1{e^{2{{\bar u}_n}}} \leq m_1\int_{{\mathbb{S}^2}} {{e^{2{u_n}}}} {\text{d}{v}} \leq \int_{{\mathbb{S}^2}} {\left( {{h_{1,n}}{e^{2{u_n}}}{\mkern 1mu}  + {h_{2,n}}{e^{{u_n}}}{{\left| {{\psi _n}} \right|}^2}} \right)} {\text{d}{v}} = 4\pi   .
\]
Therefore, there exists a constant $C>1$ such that
\begin{equation}\label{energy-bound}
	{{\bar u}_n} \leq {C},\quad \int_{{\mathbb{S}^2}} {{e^{2{u_n}}}} {\text{d}{v}} \leq {C}.
\end{equation}
To obtain the compactness result, a main goal is to prove that the inequality \begin{equation}\label{int_finit}
	\int_{\mathbb{S}^2} e^{2 p u_n} \, {\text{d}{v}} \leq C
\end{equation} holds for  some \( p > 1 \). To this purpose, we employ the method of Green's representation  formula. Firstly, from (\ref{coefficient-bound}), (\ref{energy-bound}) and
$$
{h^2_{1,n}}{e^{2{u_n}}} = {e^{ - 2{u_n}}}{\left( 1 - {h_{2,n}}{e^{{u_n}}}{\left| {{\psi _n}} \right|^2} - \Delta {u_n}\right) ^2},
$$
we have
\begin{equation}\label{weishi}
	\int_{\mathbb{S}^2} e^{-2u_n} \left( 1-{h_{2,n}}{e^{u_n}}{\left| \psi_n  \right|^2} -\Delta u_n\right) ^2  {\text{d}{v}} \leq C.
\end{equation}	
By the small energy condition (\ref{concentrate}) we can choose a sufficiently small \( r \) such that
\begin{equation}\label{local-limit}
	\lim_{n \to \infty} \int_{B_r(x)} |\Delta u_n| \, {\text{d}{v}} < 2\pi.
\end{equation}
Set \( I_n = \int_{B_r(x)} |\Delta u_n| \, {\text{d}{v}} \). Recall the Green's representation formula
\begin{equation*}
	u_n(y) - \bar u_n = \int_{\mathbb{S}^2} -\Delta u_n(z) G(y, z) \, {\text{d}{v}}(z).
\end{equation*}
Hence for some $p>1$ we have
\begin{equation*}
	2p (u_n(y) - \bar u_n) \leq \int_{B_r(x)} 2p |\Delta u_n(z)| |G(y, z)| \, {\text{d}{v}}(z) + C,
\end{equation*}
for any \( y \in B_{r/2}(x) \) and for $C=C(r)$.  By Jensen's inequality and
\begin{equation*}
	\frac{1}{I_n} \int_{B_r(x)} |\Delta u_n| \, {\text{d}{v}} = 1,
\end{equation*}
we obtain
\begin{equation}\label{cedu}			
	\begin{aligned}
		{e^{2p({u_n}(y) - {{\bar u}_n})}} &\leq C{e^{\int_{{B_r}(x)} 2 p{I_n}|G(y,z)| \frac{{|\Delta {u_n}(z)|}}{{{I_n}}}{\text{d}{v}}(z)}}\\
		&\leq  C\int_{{B_r}(x)}\frac{{|\Delta {u_n}(z)|}}{{{I_n}}} e^{2p{I_n}|G(y,z)|} {\text{d}{v}}(z)  \\
		&= \frac{C}{I_n} \int_{B_r(x)} |\Delta u_n(z)| e^{2p I_n |G(y, z)|} \, {\text{d}{v}}(z)
	\end{aligned}
\end{equation}
for any \( y \in B_{r/2}(x) \). According to (\ref{local-limit}) we can find a positive number \( p > 1 \), such that
\begin{equation*}
	\lim_{n \to \infty} p I_n < 2\pi.
\end{equation*}
Recall $ G(y, z) \sim -\frac{1}{2\pi}\log |y - z|$, then
\begin{equation*}
	e^{2\pi |G(y,z)|} \sim \frac{1}{|y - z|}.
\end{equation*}
As a result we have
\begin{equation*}
	\int_{{\mathbb{S}^2}} {{e^{2p{I_n}|G(y,z)|}}}  {\text{d}{v}}(y) \leq C.
\end{equation*}
From (\ref{cedu}) we have
\begin{equation*}
	\begin{aligned}
		\int_{{B_{r/2}}(x)} {{e^{2p({u_n}(y) - {{\bar u}_n})}}}  {\text{d}{v}}(y) &\leq \int_{{B_{r/2}}(x)} {\frac{C}{{{I_n}}}\int_{{B_r}(x)} | \Delta {u_n}(z)|{e^{2p{I_n}|G(y,z)|}} {\text{d}{v}}(z)}  {\text{d}{v}}(y)  \\
		&\leq \frac{C}{{{I_n}}}\int_{{B_r}(x)} {\left| {\Delta {u_n}(z)} \right|\int_{{B_{r/2}}(x)}   {e^{2p{I_n}|G(y,z)|}}{\text{d}{v}}(y){\text{d}{v}}(z) }   \\
		&\leq \frac{C}{{{I_n}}}\int_{{B_r}(x)} {\left| {\Delta {u_n}(z)} \right|{\text{d}{v}}(z) }   \\
		&   \leq C  .
	\end{aligned}
\end{equation*}
By covering \( \mathbb{S}^2 \) with a finite number of balls \( B_{r/2}(x) \), we obtain
\begin{equation}\label{3.1}
	\int_{\mathbb{S}^2} e^{2p (u_n - \bar u_n)} \, {\text{d}{v}} \leq C,
\end{equation}
for some \( p > 1 \). Since $\bar {u}_n\leq C$ by (\ref{energy-bound}), we obtain
\begin{equation*}
	\int_{\mathbb{S}^2} e^{2p u_n} \, {\text{d}{v}} \leq C,
\end{equation*} which implies that (\ref{int_finit}) holds. Specifically,  \[
\int_{{{\mathbb{S}^2}}} {{e^{2\left( {{u_n} - {{\overline u }_n}} \right)}}} {\text{d}}v \leq C,
\]
which implies
\begin{equation}\label{3.2}
	\begin{aligned}
		& \int_{{{\mathbb{S}^2}}} {{h_{1,n}}{e^{2\left( {{u_n} - {{\overline u }_n}} \right)}}} {\text{d}}v + \int_{{{\mathbb{S}^2}}} {h_{2,n}}{{e^{\left( {{u_n} - {{\overline u }_n}} \right)}}{{\left| {{\psi _n}} \right|}^2}} {\text{d}}v \\
		& \leq  M_1\int_{{{\mathbb{S}^2}}} {{e^{2\left( {{u_n} - {{\overline u }_n}} \right)}}} {\text{d}}v +M_2 {\left( {\int_{{{\mathbb{S}^2}}} {{e^{2\left( {{u_n} - {{\overline u }_n}} \right)}}{\text{d}}v} } \right)^{\frac{1}{2}}}{\left( {\int_{{{\mathbb{S}^2}}} {{{\left| {{\psi _n}} \right|}^4}} {\text{d}}v} \right)^{\frac{1}{2}}} \leq C.
	\end{aligned}
\end{equation}
On the other hand, from (\ref{energy-bound}) we have
\[
\begin{aligned}
	\int_{{{\mathbb{S}^2}}} {{h_{1,n}}{e^{2\left( {{u_n} - {{\overline u }_n}} \right)}}} {\text{d}}v + \int_{{{\mathbb{S}^2}}} {h_{2,n}}{{e^{\left( {{u_n} - {{\overline u }_n}} \right)}}{{\left| {{\psi _n}} \right|}^2}} {\text{d}}v =& {e^{ - {{\overline u }_n}}}\left( {{e^{ - {{\overline u }_n}}}\int_{{{\mathbb{S}^2}}} {{h_{1,n}}{e^{2{u_n}}}} {\text{d}}v + \int_{{{\mathbb{S}^2}}} {h_{2,n}}{{e^{{u_n}}}{{\left| {{\psi _n}} \right|}^2}} {\text{d}}v} \right)  \\
	\geq& {e^{ - {{\overline u }_n}}}\left( {\frac{1}{C}\int_{{{\mathbb{S}^2}}} {{h_{1,n}}{e^{2{u_n}}}} {\text{d}}v + \int_{{{\mathbb{S}^2}}} {h_{2,n}}{{e^{{u_n}}}{{\left| {{\psi _n}} \right|}^2}} {\text{d}}v} \right)  \\
	\geq& {e^{ - {{\overline u }_n}}}\frac{1}{C}\left( {\int_{{{\mathbb{S}^2}}} {{h_{1,n}}{e^{2{u_n}}}} {\text{d}}v + \int_{{{\mathbb{S}^2}}} {h_{2,n}}{{e^{{u_n}}}{{\left| {{\psi _n}} \right|}^2}} {\text{d}}v} \right)  \\
	\geq& {e^{ - {{\overline u }_n}}}\frac{{4\pi }}{C} .
\end{aligned}
\]
Hence by (\ref{3.2}) we obtain ${{ - {{\overline u }_n}}} \leq C$. Consequently we have $|\bar{u}_n |\leq C.$

Now we use  H\"older's inequality, (\ref{int_finit}) and (\ref{weishi}) to get
\begin{equation*}
	\begin{aligned}
		& \int_{\mathbb{S}^2}\left| 1 -{h_{2,n}}{e^{u_n}}{\left| \psi_n  \right|^2}-\Delta u_n\right| ^{2p/(p+1)} \, {\text{d}{v}}\\
		\leq &\left( \int_{\mathbb{S}^2} e^{-2 u_n} (1-{h_{2,n}}{e^{u_n}}{\left| \psi_n  \right|^2} -\Delta u_n)^2 \, {\text{d}{v}} \right)^{p/(p+1)} \left( \int_{\mathbb{S}^2} e^{2 p u_n} \, {\text{d}{v}} \right)^{1/(p+1)}\\
		\leq& C,
\end{aligned}			\end{equation*}
and
\begin{equation*}
	\begin{aligned}
		& \int_{\mathbb{S}^2}(e^{u_n}|\psi_n|^2)^{2p/(p+1)} \, {\text{d}{v}}\\
		& \leq \left( \int_{\mathbb{S}^2} e^{2p u_n} \, {\text{d}{v}}\right )^{\frac 1{p+1}} \left( \int_{\mathbb{S}^2}|\psi_n|^4 \, {\text{d}{v}} \right)^{p/(p+1)}\\
		& \leq C.
\end{aligned}			\end{equation*}
Hence by Minkowski's inequality we obtain
\[
\begin{aligned}
	{\left( {\int_{{\mathbb{S}^2}} {{{\left| {\Delta {u_n}} \right|}^{2p/\left( {{\text{p + }}1} \right)}}} {\text{d}}v} \right)^{\frac{{p - 1}}{{2p}}}} \leq  & {\left( {\int_{{\mathbb{S}^2}} {{{\left| {1 - {h_{2,n}}{e^{{u_n}}}{{\left| {{\psi _n}} \right|}^2} - \Delta {u_n} - 1 + {h_{2,n}}{e^{{u_n}}}{{\left| {{\psi _n}} \right|}^2}} \right|}^{2p/(p + 1)}}} {\text{d}}v} \right)^{\frac{{p - 1}}{{2p}}}}  \\
	\leq& {\left( {\int_{{\mathbb{S}^2}} {{{\left| {1 - {h_{2,n}}{e^{{u_n}}}{{\left| {{\psi _n}} \right|}^2} - \Delta {u_n}} \right|}^{2p/(p + 1)}}} {\text{d}}v} \right)^{\frac{{p - 1}}{{2p}}}} \\
	& + {\left( {\int_{{\mathbb{S}^2}} {{{\left| {1 - {h_{2,n}}{e^{{u_n}}}{{\left| {{\psi _n}} \right|}^2}} \right|}^{2p/(p + 1)}}} {\text{d}}v} \right)^{\frac{{p - 1}}{{2p}}}}  \\
	\leq& C,  \\
\end{aligned}
\]
which implies that $u_n-\bar{u}_n$ is in $W^{2,\frac {2p}{p+1}}(\mathbb{S}^2)$ for some $p>1$ and have uniformly bounded norm. By the Sobolev embedding theorem and the boundedness of \(|\bar u_n|\), we have ${\left\| {{u_n}} \right\|_{{C^0}}} \leq {\text{C}}$, which means ${\left\| {{e^{{u_n}}}{\psi _n}} \right\|_{{L^4}}} \leq {\text{C}}.$ From the \(L^p\) estimates \cite[Theorem 3.2.3]{ammann2003variational} for the Dirac equation and the embedding theorem \cite[Theorem 3.3.2, Theorem 3.3.5]{ammann2003variational}, we have ${\left\| {{\psi _n}} \right\|_{{C^{0,1/2}}}} \leq C{\left\| {{\psi _n}} \right\|_{{W^{1,4}}}} \leq {\text{C}}$. Consequently, applying the usual elliptic bootstrap argument yields \[{\left\| {{u_n}} \right\|_{{C^k}}} + {\left\| {{\psi _n}} \right\|_{{C^k}}} \leq C.\]  Thus we complete the proof.\qed

\
\

Next we present the proof of Proposition \ref{prop1.4}, which establishes a novel compactness property on the sphere. To effectively constrain the conformal group of the sphere through the centroid condition (\ref{zhixin}), we shall utilize the celebrated Moser-Trudinger inequality. Specifically, our argument will rely on the sharp version of this inequality for the sphere, originally established by Gui and Moradifam in their seminal work \cite{MR3878729}.
\begin{proposition}[Theorem1.2, \cite{MR3878729}]
	If a function $f \in \mathscr{S}$, then
	\[\fint_{{\mathbb{S}^2}} {{e^{2f}}} {\text{d}}{{\text{v}}_g} \leq {e^{\frac{1}{2}\fint_{{\mathbb{S}^2}} {{{\left| {\nabla f} \right|}^2}} {\text{d}}{{\text{v}}_g} + 2\overline f }}\]
\end{proposition}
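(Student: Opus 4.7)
The inequality is precisely Aubin's improvement of the classical Onofri inequality on $\mathbb{S}^2$, in which the three centroid constraints $f\in\mathscr{S}$ halve the usual constant from $1$ to $\tfrac{1}{2}$. Since both the constraint and the conclusion are invariant under $f\mapsto f+c$ (both sides acquire the factor $e^{2c}$ and the constraint rescales by the same factor), it suffices to treat the case $\bar f = 0$ and prove $\log\fint e^{2f}\,\text{d}v \leq \tfrac{1}{2}\fint |\nabla f|^2\,\text{d}v$. Accordingly I would introduce the functional
\[
J(f) := \tfrac{1}{2}\fint_{\mathbb{S}^2}|\nabla f|^2\,\text{d}v - \log\fint_{\mathbb{S}^2} e^{2f}\,\text{d}v
\]
on $\mathscr{S}\cap\{\bar f = 0\}$ and show $\inf J = 0$, attained only at $f\equiv 0$.

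\textbf{Step 1: Compactness of a minimizing sequence.} The essential role of the centroid constraint is to kill the loss of compactness arising from the conformal automorphism group of $\mathbb{S}^2$. If a minimizing sequence $\{f_n\}$ were unbounded in $H^1$, a Lions-type concentration--compactness analysis (with the classical, non-sharp Moser--Trudinger inequality providing coercivity) would produce a point $x_0\in\mathbb{S}^2$ at which the probability measure $e^{2f_n}\,\text{d}v/\fint e^{2f_n}\,\text{d}v$ concentrates; passing to the limit in the centroid identities would then force $\fint e^{2f_n}x_j\,\text{d}v\to x_{0,j}$ with $|x_0|=1$, contradicting $f_n\in\mathscr{S}$. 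This bound yields a minimizer $f_*$.

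\textbf{Step 2: Identification of the minimizer.} With Lagrange multipliers $\mu_0,\mu_1,\mu_2,\mu_3$ for the constraints $\bar f=0$ and $\fint e^{2f}x_j\,\text{d}v=0$, the Euler--Lagrange equation reads
\[
-\Delta f_* = \frac{2 e^{2f_*}}{\fint e^{2f_*}\,\text{d}v} - 2 + 2\sum_{j=1}^{3}\mu_j x_j e^{2f_*},
\]
which is a prescribed-curvature equation with effective curvature $K = \frac{1}{\fint e^{2f_*}\,\text{d}v}+\sum_j\mu_j x_j$. The Kazdan--Warner identity \eqref{Kaz-War}, combined with $\langle\nabla x_j,\nabla x_k\rangle_{g_0}=\delta_{jk}-x_jx_k$ and the centroid condition $\fint x_j e^{2f_*}\,\text{d}v = 0$, collapses to the linear system $(I-M)\mu = 0$ with symmetric matrix $M_{jk}:=\fint x_jx_k e^{2f_*}\,\text{d}v\big/\fint e^{2f_*}\,\text{d}v$. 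Since $\mathrm{tr}\,M = 1$ and $M$ could have eigenvalue $1$ only if $e^{2f_*}$ were supported on an antipodal pair (impossible for a smooth $f_*\in\mathscr{S}$), all eigenvalues of $M$ are strictly below $1$, forcing $\mu_j=0$. The residual equation states that $e^{2f_*}g_0$ has constant Gaussian curvature; by Obata's theorem $f_*$ is either constant (hence $\equiv 0$) or $e^{2f_*}g_0=\phi^* g_0$ for a non-trivial conformal diffeomorphism $\phi$, which is incompatible with $f_*\in\mathscr{S}$. Therefore $\inf J = J(0) = 0$.

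\textbf{Main obstacle.} The delicate point is the compactness in Step 1: the conformal automorphism group of $\mathbb{S}^2$ is six-dimensional while $\mathscr{S}$ imposes only three linear conditions, and one must verify that these conditions exactly match the three dilation directions of the M\"obius group (rotations preserving $\mathscr{S}$ automatically). Making this quantitative, so that bubbling profiles $f_n\sim\tfrac{1}{2}\log|J\phi_n|$ along degenerating Möbius transformations $\phi_n$ are excluded by $f_n\in\mathscr{S}$, is where the bulk of the technical work lies.
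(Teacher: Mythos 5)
The paper does not prove this proposition; it cites it as Theorem~1.2 of \cite{MR3878729} and uses it as a black box, so there is no ``paper's proof'' to compare against. Your outline reproduces the classical Aubin strategy (direct minimization, concentration--compactness to rule out bubbling via the centroid constraint, then Kazdan--Warner identities together with Obata rigidity to identify the minimizer), and Step~2 is correct as a rigidity argument once a smooth minimizer is in hand. The genuine gap is Step~1. Concentration--compactness combined with the centroid constraint is precisely Aubin's original argument, and it yields only the \emph{non-sharp} improvement: for every $\epsilon>0$ there is a $C_\epsilon$ with $\log\fint e^{2f}\,\mathrm{d}v \le \left(\tfrac12+\epsilon\right)\fint|\nabla f|^2\,\mathrm{d}v + 2\bar f + C_\epsilon$ on $\mathscr{S}$. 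At the borderline exponent $\tfrac12$ the functional $J=\tfrac12\fint|\nabla f|^2-\log\fint e^{2f}$ sits exactly at the threshold where bubbling profiles have zero excess, so the Lions alternative does not deliver compactness of a minimizing sequence. Moreover, your appeal to the ``classical non-sharp Moser--Trudinger inequality'' for coercivity is backwards: $\log\fint e^{2f}\le \fint|\nabla f|^2+2\bar f$ has the constant on the wrong side and only gives $J\ge -\tfrac12\fint|\nabla f|^2$, i.e.\ no lower bound at all; the coercivity must itself come from an Aubin-type improvement, which reintroduces the $\epsilon$. Obtaining the sharp constant with $\epsilon=0$ and $C_\epsilon=0$ was a longstanding conjecture of Chang--Yang, partially settled by Ghoussoub--Lin for a range of exponents, and finally proved in \cite{MR3878729} by the sphere covering inequality, a symmetrization and comparison technique that is not a direct variational argument of the type you sketch. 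Calling the compactness issue a ``technical obstacle'' understates it: making your Step~1 rigorous at exponent $\tfrac12$ is, in substance, proving the theorem.
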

\noindent\textbf{Proof of Proposition \ref{prop1.4}. }
\begin{enumerate}[label=\roman*), leftmargin=*]
	\setcounter{enumi}{0}
	\item Multiply both sides of the first equation of (\ref{jinsi}) by $2u_n$, and then integrate them to obtain
	$$
	2\int_{{\mathbb{S}^2}} {{{\left| {\nabla u_n} \right|}^2}{\text{d}{v}}}  = 2\int_{{\mathbb{S}^2}} {\left( {{h_{1,n}}u_n{e^{2u_n}} + {h_{2,n}}u_n{e^{u_n}}{{\left| \psi_n  \right|}^2}} \right){\text{d}{v}}}  - 2\int_{{\mathbb{S}^2}} u_n{\text{d}{v}}.
	$$
	Due to
	\[
	\frac{1}{4\pi}\int_{{\mathbb{S}^2}} {\left( {{h_{1,n}}{e^{2{u_n}}} +{h_{2,n}} {e^{u_n}}{{\left| \psi_n  \right|}^2}} \right){\text{d}{v}} = 1}, \]
	Let us take the unit measure on the sphere
	\[
	d\mu=\frac{1}{4\pi} {\left( {{h_{1,n}}{e^{2{u_n}}} +{h_{2,n}} {e^{u_n}}{{\left| \psi_n  \right|}^2}} \right){\text{d}{v}}}.\]
	Then by Jensen inequality with measure $d\mu$
	$$
	e^{\int_{\mathbb{S}^2}2u_nd\mu}\leq \int_{\mathbb{S}^2}e^{2u_n}d\mu,
	$$
	we have
	$$
	2\int_{{\mathbb{S}^2}} {{{\left| {\nabla u_n} \right|}^2}{\text{d}{v}}}  + 2\int_{{\mathbb{S}^2}} u_n{\text{d}{v}} \leq 4\pi\log \frac{1}{4\pi}\int_{{\mathbb{S}^2}} {\left( {{h_{1,n}}{e^{4u_n}} +{h_{2,n}} {e^{3u_n}}{{\left| \psi_n  \right|}^2}} \right){\text{d}{v}}}
	$$
	By Moser-Trudinger inequality and H\"{o}lder inequality we have
	\begin{equation}\label{S-L-Holder}
		\begin{aligned}
			&2\int_{{\mathbb{S}^2}} {{{\left| {\nabla u_n} \right|}^2}{\text{d}{v}}}  + 2\int_{{\mathbb{S}^2}} u_n{\text{d}{v}} \\
			\leq &4\pi\log \frac{1}{4\pi}\int_{{\mathbb{S}^2}} {\left( {{h_{1,n}}{e^{4u_n}} + {h_{2,n}}{e^{3u_n}}{{\left| \psi_n  \right|}^2}} \right){\text{d}{v}}}  \\
			\leq& 4\pi\log \left(\frac{1}{4\pi} {\int_{{\mathbb{S}^2}} {M_1{e^{4u_n}}{\text{d}{v}}}  +M_2 {{\left( {\frac{1}{4\pi}\int_{{\mathbb{S}^2}} {{e^{3pu_n}}{\text{d}{v}}} } \right)}^{\frac{1}{p}}}{{\left( \frac{1}{4\pi}{\int_{{\mathbb{S}^2}} {{{\left| \psi_n  \right|}^{2q}}} {\text{d}{v}}} \right)}^{\frac{1}{q}}}} \right) \\
			\leq &4\pi\log \left( {M_1{e^{\frac{1}{2\pi}\int_{{\mathbb{S}^2}} {{{\left| {\nabla u_n} \right|}^2}{\text{d}{v}}}  + 4\bar u_n}} +\frac{{{M_2}}}{{{{\left( {4\pi } \right)}^{1/q}}}} {{\left\| \psi_n  \right\|^2_{L^{2q}}}}{{\left( {{e^{\frac{{9p^2}}{8}\frac{1}{4\pi}\int_{{\mathbb{S}^2}} {{{\left| {\nabla u_n} \right|}^2}{\text{d}{v}}}  + 3p\bar u_n}}} \right)}^{\frac{1}{p}}}} \right) ,
		\end{aligned}
	\end{equation}
	where $\frac{1}{p}+\frac{1}{q}=1, p>1$. Take  $p=\frac {16}{9}$, then  $q=\frac {16}{7}$. It follows from (\ref{S-L-Holder}) that
	$$\begin{aligned}
		2\int_{{\mathbb{S}^2}} {{{\left| {\nabla u_n} \right|}^2}{\text{d}{v}}}  +2\int_{{\mathbb{S}^2}} u_n{\text{d}{v}} &\leq 4\pi\log \left( {M_1{e^{\frac{1}{2\pi}\int_{{\mathbb{S}^2}} {{{\left| {\nabla u_n} \right|}^2}{\text{d}{v}}}  + 4\bar u_n}} + \frac{{{M_2}}}{{{{\left( {4\pi } \right)}^{1/q}}}}{{\left\| \psi_n  \right\|}_{L^{\frac{32}{7}}}^2}\left( {{e^{\frac{1}{2\pi}\int_{{\mathbb{S}^2}} {{{\left| {\nabla u_n} \right|}^2}{\text{d}{v}}}  +  {3} \bar u_n}}} \right)} \right)\\
		&\leq 2\int_{{\mathbb{S}^2}} {{{\left| {\nabla u_n} \right|}^2}{\text{d}{v}} + 4\pi \log \left( {M_1{e^{4\bar u_n}} + C{e^{3\bar u_n}}} \right)}.
	\end{aligned}
	$$
	Hence we obtain
	\[
	{e^{2\bar u_n}} \leq M_1{e^{4\bar u_n}} + C{e^{3\bar u_n}},
	\]
	which means $\bar u_n$ has a lower bound, i.e.
	\begin{equation}\label{mean-low}
		\bar u_n \geq  - C.
	\end{equation}			
	Putting (\ref{energy-bound}) and (\ref{mean-low}) together, we obtain
	\begin{equation}\label{mean-bound}
		\left| {{{\bar u}_n}} \right| \leq C.
	\end{equation}
	\item According to Green's representation formula we have \[\begin{aligned}
		- u_n\left( x \right) + \overline u_n & = \int_{{\mathbb{S}^2}} {\Delta u_n(y)G(x,y)dy}   \\
		&= \int_{{\mathbb{S}^2}} {\left( {1 - {h_{1,n}}{e^{2u_n}} - {h_{2,n}}{e^{u_n}}\left| \psi_n  \right|^2} \right)G(x,y)dy}   \\
		&\leq \int_{{\mathbb{S}^2}} {G(x,y)dy}.   \\
	\end{aligned} \]
	Therefore we have \[{u_n} \geq  - C.\]
	\item Since ${u_n} \geq  - C$ by ii), we have
	$$
	{h_{1,n}}{e^{2u_n}} +{h_{2,n}} {e^{u_n}}{{\left| \psi_n  \right|}^2}\geq m_1e^{-2C}>0.
	$$
	We choose $\delta \in (0, \min\{1, m_1e^{-2C}\})$, and take the unit measure on the sphere
	\[
	d\mu=\frac{1}{4\pi} {\frac { {{h_{1,n}}{e^{2{u_n}}} +{h_{2,n}} {e^{u_n}}{{\left| \psi_n  \right|}^2}}-\delta }{1-\delta}{\text{d}{v}}}.\]
	
	As similar arguments in i), we multiply both sides of the first equation by $2u_n$, and then integrate them,  by using Jensen's inequality with  measure $d\mu$ to obtain
	\[\begin{aligned}
		2\int_{{\mathbb{S}^2}} {{{\left| {\nabla u_n} \right|}^2}{\text{d}{v}}} & = 2\int_{{\mathbb{S}^2}} {\left( {{h_{1,n}}u_n{e^{2u_n}} +{h_{2,n}} u_n{e^{u_n}}{{\left| \psi_n  \right|}^2}} \right){\text{d}{v}}}  - 2\int_{{\mathbb{S}^2}} u_n{\text{d}{v}}  \\
		&= 2\int_{{\mathbb{S}^2}} {u_n\left( {{h_{1,n}}{e^{2u_n}} +{h_{2,n}} {e^{u_n}}{{\left| \psi_n  \right|}^2}} \right){\text{d}{v}}}  - 2\bar u_n\int_{{\mathbb{S}^2}} {\left( {{h_{1,n}}{e^{2u_n}} +{h_{2,n}} {e^{u_n}}{{\left| \psi_n  \right|}^2}} \right){\text{d}{v}}}   \\
		&= 2\int_{{\mathbb{S}^2}} {\left[ {\left( {{h_{1,n}}{e^{2u_n}} +{h_{2,n}} {e^{u_n}}{{\left| \psi_n  \right|}^2}} \right) - \delta } \right]\left( {u_n - \bar u_n} \right){\text{d}{v}}}   \\
		&= \left( {1 - \delta } \right)\int_{{\mathbb{S}^2}} {\frac{{\left[ {\left( {{h_{1,n}}{e^{2u_n}} +{h_{2,n}} {e^{u_n}}{{\left| \psi_n  \right|}^2}} \right) - \delta } \right]}}{{1 - \delta }}2\left( {u_n - \bar u_n} \right){\text{d}{v}}}   \\
		&\leq \left( {1 - \delta } \right)4\pi\log\frac{1}{4\pi} \int_{{\mathbb{S}^2}} {\frac{{\left[ {\left( {{h_{1,n}}{e^{2u_n}} +{h_{2,n}} {e^{u_n}}{{\left| \psi_n  \right|}^2}} \right) - \delta } \right]}}{{1 - \delta }}{e^{2\left( {u_n - \bar u_n} \right)}}{\text{d}{v}}}.
	\end{aligned} \]
	As (\ref{S-L-Holder}) we have
	\[\begin{aligned}
		2\int_{{\mathbb{S}^2}} {{{\left| {\nabla u_n} \right|}^2}{\text{d}{v}}}  								 &\leq 4\pi{\left( {1 - \delta } \right)}\log \frac{1}{4\pi}\int_{{\mathbb{S}^2}} {\frac{{\left[ {\left( {{h_{1,n}}{e^{4u_n}} +{h_{2,n}} {e^{3u_n}}{{\left| \psi_n  \right|}^2}} \right){e^{ - 2\bar u_n}} - \delta {e^{2\left( {u_n - \bar u_n} \right)}}} \right]}}{{1 - \delta }}{\text{d}{v}}}   \\
		&\leq4\pi {\left( {1 - \delta } \right)}\log \frac{1}{{4\pi }}\int_{{\mathbb{S}^2}} {\frac{{\left[ {\left( {M_1{e^{\frac{1}{2\pi}\int_{{\mathbb{S}^2}} {{{\left| {\nabla {u_n}} \right|}^2}{\text{d}}v} }}{e^{2{{\bar u}_n}}} + C\left\| {{\psi _n}} \right\|_{{L^{\frac{{32}}{7}}}}^2\left( {{e^{\frac{1}{{2\pi }}\int_{{\mathbb{S}^2}} {{{\left| {\nabla {u_n}} \right|}^2}{\text{d}}v} }}} \right){e^{{{\bar u}_n}}}} \right) - \delta {e^{2\left( {{u_n} - {{\bar u}_n}} \right)}}} \right]}}{{1 - \delta }}{\text{d}}v} \\
		&\leq4\pi {\left( {1 - \delta } \right)}\log \frac{1}{{4\pi }}\int_{{\mathbb{S}^2}} {\frac{{M_1{e^{\frac{1}{{2\pi }}\int_{{\mathbb{S}^2}} {{{\left| {\nabla {u_n}} \right|}^2}{\text{d}}v} }}{e^{2{{\bar u}_n}}} + C\left\| {{\psi _n}} \right\|_{{L^{\frac{{32}}{7}}}}^2\left( {{e^{\frac{1}{{2\pi }}\int_{{\mathbb{S}^2}} {{{\left| {\nabla {u_n}} \right|}^2}{\text{d}}v} }}} \right){e^{{{\bar u}_n}}}}}{{1 - \delta }}{\text{d}}v} \\
		&\leq 4\pi{\left( {1 - \delta } \right)}\log {e^{\frac{1}{2\pi}\int_{{\mathbb{S}^2}} {{{\left| {\nabla u_n} \right|}^2}{\text{d}{v}}} }} + 4\pi(1-\delta)\log \left( {\frac{M_1}{{1 - \delta }}{e^{2\bar u_n}} + C\left\| \psi_n  \right\|_{L^{\frac{{32}}{7}}}^2 {{e^{\bar u_n}}} } \right)  \\
		& \leq 2{\left( {1 - \delta } \right)}\int_{{\mathbb{S}^2}} {{{\left| {\nabla u_n} \right|}^2}{\text{d}{v}}}  + 4\pi{\left( {1 - \delta } \right)}\log \left( {\frac{M_1}{{1 - \delta }}{e^{2\bar u_n}} + C\left\| \psi_n  \right\|_{L^{\frac{{32}}{7}}}^2 {{e^{\bar u_n}}}} \right).
	\end{aligned} \]
	As a result, \[2\int_{{\mathbb{S}^2}} {{{\left| {\nabla u_n} \right|}^2}{\text{d}{v}}}  \leq 2\left( {1 - \delta } \right)\int_{{\mathbb{S}^2}} {{{\left| {\nabla u_n} \right|}^2}{\text{d}{v}}}  + \left( {1 - \delta } \right)4\pi\log \left( {\frac{M_1}{{1 - \delta }}{e^{2\bar u_n}} + C\left\| \psi_n  \right\|_{L^{\frac{{32}}{7}}}^2{e^{\bar u_n}}} \right).\]
	Hence from (\ref{mean-bound}) we get
	\[\int_{{\mathbb{S}^2}} {{{\left| {\nabla u_n} \right|}^2}{\text{d}{v}}}  \leq C.\]
	Thus, by the equivalent norms of $H^1$  we obtain
	\[\left\| {{u_n}} \right\|_{{H^1}}^2 \leq C\left( {\int_{{{\mathbb{S}^2}}} {{{\left| {\nabla {u_n}} \right|}^2}} {\text{d}}v + {{\left| {{{\bar u}_n}} \right|}^2}} \right) \leq C.\]
	Then we have completed the proof.\qed
\end{enumerate}

\section{Non-existence of Solutions and a natural constraint}
We first prove Theorem \ref{trivial-condation}. This proof is related to the proof of the zero mode inequality in the work of Wang and Zhang \cite{wang2025conformal}.

\

\noindent\textbf{Proof of Theorem \ref{trivial-condation}}. From formula (\ref{stokes}) we know that if $(u,\psi)$ is a solution to (\ref{key-Sphere}) with $\psi\not \equiv0$, then
$$
{h_{1,\min }}\int_{{\mathbb{S}^2}} {{e^{2u}}}  {\text{d}{v}}\leq \int_{{\mathbb{S}^2}} {{h_1}{e^{2u}}} {\text{d}{v}} < 4\pi.
$$
From Remark 3.9 and Theorem 1.2 in [66], we know that for the equation
\[\slashiii{D}\psi  = f\psi ,\quad on\;\Sigma ,\quad f \in {L^{4/3}}\left( \Sigma  \right),\]
if \(\psi \not\equiv 0\) is a solution, then
\[
\|f\|_2^2 > 2\pi \chi(\Sigma).
\]
Therefore we have
\[4\pi  < \int_{{\mathbb{S}^2}} {{{\left| { {h_2}{e^u}} \right|}^2}{\text{d}}v}  \leq h_{2,\max }^2\int_{{\mathbb{S}^2}} {{e^{2u}}} {\text{d}{v}}.\]
If the above two formulas hold simultaneously, then it must be the case that \( h_{1,\min} < h_{2,\max}^2 \).\qed

\
\

Next, we use the variational method to find least energy solutions of equation (\ref{key-Sphere}). Recall the functional
\begin{equation*}
	E\left( {u,\psi } \right) = \int_{{\mathbb{S}^2}} {\left[ {{{\left| {\nabla u} \right|}^2} + 2u - h_1{e^{2u}} + 2\left( {\left\langle {\slashiii{D}\psi ,\psi } \right\rangle  - h_2{e^{u}}{{\left| \psi  \right|}^2}} \right)} \right]{\text{d}{v}}}  + 4\pi,
\end{equation*}
where $\left( {u,\psi } \right) \in {H^1}({\mathbb{S}^2}) \times {H^{\frac{1}{2}}}(\Sigma {\mathbb{S}^2})$.


\subsection{A natural constraint}
In this subsection, we introduce a natural constraint for a functional. We use the constraint
\begin{equation}\label{wu-nehari}
	\left\{ {\left. {\left( {u,\psi } \right)} \right|\int_{{\mathbb{S}^2}} {\left\langle {\slashiii{D}\psi  -{h_2} {e^{u}}\psi ,{\varphi _j}} \right\rangle {\text{d}{v}} = 0} ,\;\forall j < 0} \right\}
\end{equation}
in the work of Jevnikar et al. \cite{jevnikar2020existence1} to eliminate the effects of strong indefiniteness of the functional. Here $\varphi_j$ is the eigenspinor corresponding to the eigenvalue $\lambda_j$ of the Dirac operator $\slashiii{D}$.  Therefore combined with (\ref{stokes}) and (\ref{nehari}), we define
\begin{equation*}\begin{aligned}
		\mathcal{A} = \left\{ {\left. {\left( {u,\psi } \right) \in {H^1}(\mathbb{S}^2) \times {H^{1/2}(\mathcal{S}\mathbb{S}^2)}} \right|} \right.&\int_{{\mathbb{S}^2}} {\left\langle {\slashiii{D}\psi  -{h_2} {e^{u}}\psi ,{\varphi _j}} \right\rangle {\text{d}{v}}  = 0} ,\ \forall j < 0, \\
		& \left. {\int_{{\mathbb{S}^2}} {\left\langle {\slashiii{D}\psi ,\psi } \right\rangle }{\text{d}{v}}  = \int_{{\mathbb{S}^2}}{h_2} {{e^{u}}} {{\left| \psi  \right|}^2}}{\text{d}{v}},\ \int_{{\mathbb{S}^2}} {\left( {h_1{e^{2u}} + {h_2}{e^{u}}{{\left| \psi  \right|}^2}} \right)}{\text{d}{v}}  = 4\pi \right\}.
	\end{aligned}
\end{equation*}
Then on this constraint $\mathcal{A}$,
\begin{equation*}\begin{aligned}
		E\left( {u,\psi } \right) &= \int_{{\mathbb{S}^2}} {\left( {{{\left| {\nabla u} \right|}^2} + 2u - {h_1}{e^{2u}}} \right){\text{d}{v}} + 4\pi }   \\
		&= \int_{{\mathbb{S}^2}} {\left( {{{\left| {\nabla u} \right|}^2} + 2u +{h_2} {e^{u}}{{\left| \psi  \right|}^2}} \right)}{\text{d}{v}} .
	\end{aligned}
\end{equation*}
If a constraint critical point $(u,\psi)$ of ${\left. {E(u,\psi )} \right|_\mathcal{A}}$ satisfies $\psi\not\equiv 0$, then for any $(v,\phi)\in{H^1}({\mathbb{S}^2}) \times {H^{1/2}}\left( {\mathcal{S} {\mathbb{S}^2}} \right)$, there exist Lagrange multipliers $\lambda$, $\tau$ and $\mu_j$ with $j<0$ such that
\begin{equation}\label{grad_u}
	\begin{aligned}
		&2\int_{{\mathbb{S}^2}} {\left( {\nabla u\nabla v + v - h_1{e^{2u}}v - {h_2}{e^{u}}|\psi {|^2}v} \right)}{\text{d}{v}}   \\
		=  &- \sum_{j<0}{\mu _j}\int_{{\mathbb{S}^2}} {h_2}{{e^{u}}\left\langle {\psi ,{\varphi _j}} \right\rangle v}{\text{d}{v}}  - \lambda \int_{{\mathbb{S}^2}} {h_2}{{e^{u}}} |\psi {|^2}v{\text{d}{v}} + \tau \int_{{\mathbb{S}^2}} {\left( {2h_1{e^{2u}} + {h_2}{ e^{u}}|\psi {|^2}} \right)} v {\text{d}{v}}.
	\end{aligned}
\end{equation}
and
\begin{equation}\label{grad_psi}
	\begin{aligned}
		&4\int_{{\mathbb{S}^2}} {\left( {\langle \slashiii{D}\psi ,\phi \rangle  - {h_2}{ e^{u}}\langle \psi ,\phi \rangle } \right)}{\text{d}{v}}  \\
		=	& \sum_{j<0}{\mu _j}\int_{{\mathbb{S}^2}} {\left\langle {\slashiii{D}\phi  - {h_2}{ e^{u}}\phi ,{\varphi _j}} \right\rangle }{\text{d}{v}}  + 2\lambda \int_{{\mathbb{S}^2}} {\left( {\langle \slashiii{D}\psi ,\phi \rangle  - {h_2}{ e^{u}}\langle \psi ,\phi \rangle } \right)}{\text{d}{v}}  + 2\tau \int_{{\mathbb{S}^2}} {\left\langle {{ {h_2}e^{u}}\psi ,\phi } \right\rangle }{\text{d}{v}} .
	\end{aligned}
\end{equation}
Take $\phi=\psi$ in (\ref{grad_psi}) and use (\ref{wu-nehari}) as well as (\ref{nehari}), we have
\begin{equation*}
	0 = 2\tau \int_{{\mathbb{S}^2}} {h_2}{{ e^{u}}{{\left| \psi  \right|}^2}}{\text{d}{v}}. \end{equation*}
Hence $\tau  = 0$. Letting $\phi=\varphi_j ,\ j<0$ in (\ref{grad_psi}) and using (\ref{wu-nehari}), we obtain \begin{equation*}
	\begin{aligned}
		0& = {\mu _j}\int_{{\mathbb{S}^2}} {\left\langle {\slashiii{D}{\varphi _j} - {h_2}{ e^{u}}{\varphi _j},{\varphi _j}} \right\rangle }{\text{d}{v}}   \\
		&= {\mu _j}\left( {\int_{{\mathbb{S}^2}} {\left\langle {\slashiii{D}{\varphi _j},{\varphi _j}} \right\rangle }  - {{\int_{{\mathbb{S}^2}} {h_2}{{ e^{u}}\left| {{\varphi _j}} \right|} }^2}} \right){\text{d}{v}},  \\
		&=  - {\mu _j}|\lambda_j| - {\mu _j}{\int_{{\mathbb{S}^2}} {h_2}{{ e^{u}}\left| {{\varphi _j}} \right|} ^2}{\text{d}{v}}.
	\end{aligned}
\end{equation*}
From this, it can be concluded that $\mu_j=0$. Taking $v=1$ in (\ref{grad_u}) and using (\ref{stokes}), we have
\begin{equation*}
	0 =  - \lambda \int_{{\mathbb{S}^2}} {h_2}{{ e^{u}}} |\psi {|^2}{\text{d}{v}}
\end{equation*}
Therefore $\lambda=0$.	If a constrained critical point $(u,\psi)$ of ${\left. {E(u,\psi )} \right|_\mathcal{A}}$ satisfies $\psi\equiv 0$, it can be concluded that all Lagrange multipliers are zero as above. Therefore the constraint $\mathcal{A} $ is a natural constraint.

We now show that \(\mathcal{A}\) is nontrivial with respect to $\psi$. Take \((u_0, \psi_0) \in H^1 \times H^{1/2}\) and define \((u', \psi') = (\log t + u_0,\, s\psi_0)\). If \((u', \psi') \in \mathcal{A}\), then the following holds:
\begin{equation}\label{ray-1}
	{t^2}\int_{{\mathbb{S}^2}} {{h_1}(x){e^{2{u_0}}}} {\text{d}}v + t{s^2}\int_{{\mathbb{S}^2}} {{h_2}(x){e^{{u_0}}}{{\left| {{\psi _0}} \right|}^2}} {\text{d}}v = 4\pi,
\end{equation}
and
\begin{equation}\label{ray-2}
	{s^2}\int_{{\mathbb{S}^2}} {\left\langle {\slashiii{D}{\psi _0},{\psi _0}} \right\rangle {\text{d}}v}  = {s^2}t\int_{{\mathbb{S}^2}} {{h_2}(x){e^{{u_0}}}{{\left| {{\psi _0}} \right|}^2}{\text{d}}v}.
\end{equation}
By (\ref{ray-2}), it is first necessary to have
\begin{equation}\label{nes_1}
	t = \frac{{\int_{{\mathbb{S}^2}} {\left\langle {\slashiii{D}{\psi _0},{\psi _0}} \right\rangle dV} }}{{\int_{{\mathbb{S}^2}} {{h_2}(x){e^{{u_0}}}{{\left| {{\psi _0}} \right|}^2}dV} }} > 0.
\end{equation}
and
\begin{equation}\label{nes_2}
	{s^2} = \frac{{4\pi }}{{\int_{{\mathbb{S}^2}} {\left\langle {\slashiii{D}\psi_0 ,\psi_0 } \right\rangle dV} }} - \frac{{\int_{{\mathbb{S}^2}} {{h_1}(x){e^{2u_0}}} dV\int_{{\mathbb{S}^2}} {\left\langle {\slashiii{D}\psi_0 ,\psi_0 } \right\rangle dV} }}{{{{\left( {\int_{{\mathbb{S}^2}} {{h_2}(x){e^{u_0}}{{\left| \psi_0  \right|}^2}} dV} \right)}^2}}} \geq 0.
\end{equation}
To satisfy Condition (\ref{nes_1}), we may select \( \psi_0 \in H^{1/2,+} \). For Condition (\ref{nes_2}), we set \( e^{u_0} = \frac{c}{h_2} \), at which point the first constraint on $\mathcal{A}$ is also satisfied. It then suffices to show the existence of a pair \( \psi_0 \) such that	
\[
{\left( {\frac{{4\pi }}{{\int_{{\mathbb{S}^2}} {\frac{{{h_1}(x)}}{{h_2^2(x)}}} {\text{d}}v}}} \right)^{1/2}} \geq \frac{{\int_{{\mathbb{S}^2}} {\left\langle {\slashiii{D}{\psi _0},{\psi _0}} \right\rangle {\text{d}}v} }}{{\int_{{\mathbb{S}^2}} {{{\left| {{\psi _0}} \right|}^2}} {\text{d}}v}}.
\]
This demonstrates that if ${\int_{{\mathbb{S}^2}} {\frac{{{h_1}(x)}}{{h_2^2(x)}}} {\text{d}}v}$ is sufficiently small, less than $4\pi$ for instance, then $\mathcal{A}$ must be nontrivial. As will be shown in the proof of the theorem \ref{main-Thm}, the condition \(\lambda_1(h_2, h_1) < 1\) is sufficient to guarantee the nontriviality of the set \(\mathcal{A}\).

Next, we show that $\mathcal{A} \setminus \left( H^1(\mathbb{S}^2) \times \{0\} \right)$ is a submanifold. To this end, we define the following functionals:$$\begin{aligned}
	G_{1,j}(u,\psi) &= \int_{\mathbb{S}^2} \langle \slashiii{D}\psi - h_2 e^u \psi, \varphi_j \rangle \, \text{d}v, \quad \forall j < 0, \\
	G_2(u,\psi) &= \int_{\mathbb{S}^2} \left( \langle \slashiii{D}\psi, \psi \rangle - h_2 e^u |\psi|^2 \right) \text{d}v, \\
	G_3(u,\psi) &= \int_{\mathbb{S}^2} \left( h_1 e^{2u} + h_2 e^u |\psi|^2 \right) \text{d}v - 4\pi.
\end{aligned}$$The fact that the constraints $G_{1,j}(u,\psi) = 0$ and $G_3(u,\psi) = 0$ define a submanifold has been established by Jevnikar et al. \cite{MR4206467}. Thus, it remains to examine the functional $G_2(u,\psi)$. Given that $\psi \not\equiv 0$, we consider the variation $G_2(-s \log h_2, t\varphi_1) = t^2(\lambda_1 - s)$. This expression reveals that the partial derivative of $G_2$ with respect to the parameters is surjective (non-vanishing). Consequently, zero is a regular value of $G_2$, implying that the level set $\{G_2 = 0\}$ is indeed a submanifold. Combining these observations, we conclude that $\mathcal{A} \setminus \left( H^1(\mathbb{S}^2) \times \{0\} \right)$ inherits the submanifold structure.



\subsection{The lower bound of the functional and a supersymmetric inequality}	
\begin{lemma}\label{lower-bound}
	Assume that \((u, \psi) \in \mathcal{A}\). Then there is a $C\in \mathbb{R}$ such that the following holds:
	\begin{enumerate}[label=\roman*), leftmargin=*]
		\setcounter{enumi}{0}
		\item $\ S\left[ u \right] \geq C$;
		\item $E(u,\psi)$ has a lower bound on $\mathcal{A}$.
	\end{enumerate}
\end{lemma}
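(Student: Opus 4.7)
My plan is to begin by reformulating $E$ on the constraint set $\mathcal{A}$. The Nehari identity built into $\mathcal{A}$, namely $\int_{\mathbb{S}^2}\langle\slashiii{D}\psi,\psi\rangle\,\mathrm{d}v=\int_{\mathbb{S}^2}h_2 e^u|\psi|^2\,\mathrm{d}v$, cancels the term $2(\langle\slashiii{D}\psi,\psi\rangle-h_2 e^u|\psi|^2)$ in $E$, while the volume-type constraint $\int_{\mathbb{S}^2}(h_1 e^{2u}+h_2 e^u|\psi|^2)\,\mathrm{d}v=4\pi$ lets me replace $-\int h_1 e^{2u}$ by $-4\pi+\int h_2 e^u|\psi|^2$. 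A direct manipulation then yields
\[
E(u,\psi)=4\pi\,S[u]+\int_{\mathbb{S}^2}h_2 e^u|\psi|^2\,\mathrm{d}v\qquad\text{on }\mathcal{A}.
\]
Since $h_2\geq h_{2,\min}>0$, the spinor term is non-negative, so (ii) reduces immediately to (i), and it suffices to prove the lower bound for $S[u]$ on $\mathcal{A}$.

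The first step toward (i) is a uniform $H^{1/2}$ bound on $\psi$ over $\mathcal{A}$. I test the orthogonality condition in $\mathcal{A}$ against $\psi^-=\sum_{j<0}a_j\varphi_j$ to obtain
\[
\int_{\mathbb{S}^2}\langle\slashiii{D}\psi^-,\psi^-\rangle\,\mathrm{d}v=\int_{\mathbb{S}^2}h_2 e^u\langle\psi,\psi^-\rangle\,\mathrm{d}v.
\]
The left-hand side equals $-\||\slashiii{D}|^{1/2}\psi^-\|_{L^2}^2\leq -B_2\|\psi^-\|_{H^{1/2}}^2$ by (\ref{neg-spin}). On the right I apply Cauchy-Schwarz, bounding one factor by $\int h_2 e^u|\psi|^2\leq 4\pi$ from the volume constraint, and controlling the other by H\"older together with $\int e^{2u}\leq 4\pi/h_{1,\min}$ (also from the volume constraint) and the Sobolev embedding $H^{1/2}\hookrightarrow L^4$. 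This produces $\|\psi^-\|_{H^{1/2}}^2\leq C\|\psi^-\|_{H^{1/2}}$, hence a uniform bound on $\|\psi^-\|_{H^{1/2}}$. The Nehari identity together with the spectral estimates (\ref{pos-spin})--(\ref{neg-spin}) then transfers this bound to $\psi^+$, so $\|\psi\|_{H^{1/2}}\leq C$ and in particular $\|\psi\|_{L^4}\leq C$ on $\mathcal{A}$.

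With this $L^4$ control I estimate the spinor contribution in the volume constraint by H\"older:
\[
4\pi=\int_{\mathbb{S}^2} h_1 e^{2u}\,\mathrm{d}v+\int_{\mathbb{S}^2} h_2 e^u|\psi|^2\,\mathrm{d}v\leq h_{1,\max}\int_{\mathbb{S}^2} e^{2u}\,\mathrm{d}v+C\Bigl(\int_{\mathbb{S}^2} e^{2u}\,\mathrm{d}v\Bigr)^{1/2},
\]
a quadratic inequality in $X=(\int e^{2u}\,\mathrm{d}v)^{1/2}$ forcing $\int e^{2u}\geq c_0>0$. Jensen's inequality applied to the same constraint gives the companion upper bound $\bar u\leq-\tfrac12\log h_{1,\min}$. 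Inserting both into the Onofri inequality $\int e^{2u}\leq 4\pi\exp\bigl(\bar u+\tfrac12 S[u]\bigr)$ and solving for $S[u]$ delivers $S[u]\geq -C$, completing (i) and therefore (ii).

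The main delicate point will be the bootstrap for $\psi^-$: I must ensure the constants arising in the H\"older/Sobolev chain are uniform over $\mathcal{A}$ and do not implicitly depend on $\|\psi\|_{H^{1/2}}$ itself. The quantitative bound $\int e^{2u}\leq 4\pi/h_{1,\min}$, which is purely a consequence of the volume constraint and the positivity of $h_1$, is exactly what makes the final coefficient absolute rather than $\psi$-dependent, and hence closes the estimate without circularity.
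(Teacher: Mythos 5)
Your proposal follows the same strategy as the paper's proof: rewrite $E$ on $\mathcal{A}$ as $4\pi S[u]+\int_{\mathbb{S}^2}h_2 e^u|\psi|^2\,\mathrm{d}v$ using the Nehari and volume constraints, establish a uniform $H^{1/2}$-bound on $\psi$, and then bound $S[u]$ from below via the volume constraint and Moser-Trudinger. Your treatment of the spinor bound is in fact more explicit than the paper's ``proceeding analogously to the proof of Theorem \ref{thm1}'': you correctly observe that on $\mathcal{A}$ one may only test the orthogonality condition against $\psi^-$ (since the constraint in (\ref{wu-nehari}) involves only the negative eigenspinors), bound $\|\psi^-\|_{H^{1/2}}$ from this, and transfer the bound to $\psi^+$ through the Nehari identity together with (\ref{pos-spin})--(\ref{neg-spin}); this spells out a step the paper leaves implicit. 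The bookkeeping that forces $\int_{\mathbb{S}^2}e^{2u}\,\mathrm{d}v\geq c_0>0$ (quadratic inequality via H\"older against $\|\psi\|_{L^4}$) differs superficially from the paper's Young-inequality-with-$\varepsilon$ version, but both are valid.

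There is one slip. The inequality you call Onofri, $\int_{\mathbb{S}^2}e^{2u}\,\mathrm{d}v\leq 4\pi\exp\bigl(\bar u+\tfrac12 S[u]\bigr)$, unwinds to $\int e^{2u}\leq 4\pi\exp\bigl(2\bar u+\tfrac{1}{8\pi}\int|\nabla u|^2\bigr)$, which is Moser's improved inequality valid only for antipodally symmetric functions. Lemma \ref{lower-bound} makes no evenness assumption on $u$, so this form is not available; the correct general Onofri/Moser--Trudinger inequality on $(\mathbb{S}^2,g_0)$ is $\int e^{2u}\,\mathrm{d}v\leq 4\pi\exp\bigl(\tfrac{1}{4\pi}\int|\nabla u|^2\,\mathrm{d}v+2\bar u\bigr)=4\pi e^{S[u]}$. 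Once you substitute the correct inequality the argument not only survives but simplifies: $\int e^{2u}\geq c_0>0$ gives $S[u]\geq\log(c_0/4\pi)$ immediately, and the detour through the Jensen upper bound on $\bar u$ becomes unnecessary --- that extra step was only needed to compensate for the spurious $\bar u$ in the wrong exponent.
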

\begin{proof}
	Let \((u, \psi) \in \mathcal{A}\). From the condition (\ref{stokes}) in $\mathcal{A}$ we show that there is $\varepsilon>0$ to be fixed later such that \begin{equation}\label{epsilon}
		\begin{aligned}
			4\pi  &= \int_{{\mathbb{S}^2}} {h_1{e^{2u}}}{\text{d}{v}}  + \int_{{\mathbb{S}^2}} {h_2}{{ e^{u}}{{\left| \psi  \right|}^2}}{\text{d}{v}}  \\
			&\leq {h_{1,\max}}\int_{{\mathbb{S}^2}} {{e^{2u}}} {\text{d}{v}} + \int_{{\mathbb{S}^2}} {\left( {{C_\varepsilon }{e^{2u}} + \varepsilon {{\left| \psi  \right|}^4}} \right)}{\text{d}{v}}  \\
			&\leq \left( {{h_{1,\max}} + {C_\varepsilon }} \right)\int_{{\mathbb{S}^2}} {{e^{2u}}} {\text{d}{v}}} + \varepsilon {\left\| \psi  \right\|_{H^{1/2}}^4  .
		\end{aligned}
	\end{equation}
	Then by Moser-Trudinger inequality we deduce \begin{equation*}
		\frac{{4\pi  - \varepsilon {{\left\| \psi  \right\|}_{H^{1/2}}^4}}}{{{h_{1,\max}} + {C_\varepsilon }}} \leq \int_{{\mathbb{S}^2}} {{e^{2u}}} {\text{d}{v}} \leq 4\pi {e^{\frac{1}{{4\pi }}\int_{{\mathbb{S}^2}} {\left( {{{\left| {\nabla u} \right|}^2} + 2 u} \right)} {\text{d}{v}}}}.
	\end{equation*}
	Proceeding analogously to the proof of Theorem \ref{thm1} but with $\psi^+$ replaced by $\psi^-$ in (\ref{nonlinear}) and appropriate adjustments to the sign convention,  the constraint \( \mathcal{A} \) implies the uniform boundedness of the Sobolev norm of the spinor \( \psi \) i.e. ${\left\| \psi  \right\|_{{H^{\frac{1}{2}}}}}\leq C$. According to the boundedness of $\psi$, there are definite constants $\varepsilon $ and $C'$ such that
	\begin{equation*}
		C'\leq 4\pi \log \frac{{1 - \varepsilon {{\left\| \psi  \right\|}_{H^{1/2}}^4}}}{{{h_{1,\max}} + {C_\varepsilon }}} \leq \int_{{\mathbb{S}^2}} \left( {{{\left| {\nabla u} \right|}^2}} + 2 u\right){\text{d}{v}} =4\pi  S\left[ u \right]\leq E(u,\psi) .
	\end{equation*}
	Thus, the functional $E(u,\psi)$ has a lower bound on $\mathcal{A}$.
\end{proof}

\begin{theorem}\label{SMT}
	Let $(\mathbb{S}^2,g_0)$ be the standard round sphere and let $\slashiii{D}$ be the Dirac operator associated with its spin structure. Let
	\[
	u\in H^1(\mathbb{S}^2),\qquad \bar u=0,\qquad
	\psi\in H^{1/2}(\Sigma \mathbb{S}^2)\setminus\{0\}.
	\]
	If
	\[
	\rho>0,\qquad \mu>0,\qquad \rho+\mu\le 4\pi,
	\]
	then
	\[
	\frac{\rho}{2}\log\int_{\mathbb{S}^2}e^{2u}\,\text{d}v
	+
	\mu\log\int_{\mathbb{S}^2}e^u|\psi|^2\,\text{d}v
	\le
	\frac12\int_{\mathbb{S}^2}|\nabla u|^2\,\text{d}v
	+
	\int_{\mathbb{S}^2}\langle |\slashiii{D}|\psi,\psi\rangle\,\text{d}v
	+
	C_{\rho,\mu},
	\]
	where
	\[{C_{\rho ,\mu }} = \frac{\rho }{2}\log 4\pi  + \mu \log 2\mu  - \mu .\]
\end{theorem}

\begin{proof}
	Set
	\[
	A(u):=\int_{\mathbb{S}^2}e^{2u}\,\text{d}v,
	\qquad
	Q(\psi):=\int_{\mathbb{S}^2}\langle |\slashiii{D}|\psi,\psi\rangle\,\text{d}v ,
	\]
	and
	\[
	C_{|D|}
	:=
	\inf_{\psi\in H^{1/2}(\Sigma \mathbb{S}^2)\setminus\{0\}}
	\frac{\displaystyle\int_{\mathbb{S}^2}\langle |\slashiii{D}|\psi,\psi\rangle\,\text{d}v}
	{\displaystyle\left(\int_{\mathbb{S}^2}|\psi|^4\,\text{d}v\right)^{1/2}} .
	\]
	By H\"older's inequality and the sharp Sobolev inequality for spinors,
	\[
	\int_{\mathbb{S}^2}e^u|\psi|^2\,\text{d}v
	\le
	A(u)^{1/2}
	\left(\int_{\mathbb{S}^2}|\psi|^4\,\text{d}v\right)^{1/2}
	\le
	\frac{A(u)^{1/2}Q(\psi)}{C_{|D|}} .
	\]
	Hence
	\[
	\mu\log\int_{\mathbb{S}^2}e^u|\psi|^2\,\text{d}v - Q(\psi)
	\le
	\frac{\mu}{2}\log A(u)
	-\mu\log C_{|D|}
	+\mu\log Q(\psi)-Q(\psi).
	\]
	Since
	\[
	\sup_{t>0}\{\mu\log t-t\}
	=
	\mu\log\mu-\mu,
	\]
	we obtain
	\[
	\mu\log\int_{\mathbb{S}^2}e^u|\psi|^2\,\text{d}v - Q(\psi)
	\le
	\frac{\mu}{2}\log A(u)
	-\mu\log C_{|D|}
	+\mu\log\mu-\mu .
	\]
	Therefore
	\[
	\begin{aligned}
		&\frac{\rho}{2}\log A(u)
		+\mu\log\int_{\mathbb{S}^2}e^u|\psi|^2\,\text{d}v
		-\frac12\int_{\mathbb{S}^2}|\nabla u|^2\,\text{d}v
		-Q(\psi)
		\\
		\le&
		\frac{\rho+\mu}{2}\log A(u)
		-\frac12\int_{\mathbb{S}^2}|\nabla u|^2\,\text{d}v
		-\mu\log C_{|D|}
		+\mu\log\mu-\mu .
	\end{aligned}
	\]
	By the sharp Onofri inequality on the round sphere,
	\[
	\log A(u)	\le	\log4\pi
	+\frac{1}{4\pi}\int_{\mathbb{S}^2}|\nabla u|^2\,\text{d}v,
	\qquad \bar u=0.
	\]
	It follows that
	\[
	\begin{aligned}
		&\frac{\rho}{2}\log A(u)
		+\mu\log\int_{\mathbb{S}^2}e^u|\psi|^2\,\text{d}v-\frac12\int_{\mathbb{S}^2}|\nabla u|^2\,\text{d}v	-Q(\psi)
		\\
		\le&\frac{\rho+\mu}{2}\log4\pi
		+\left(\frac{\rho+\mu}{8\pi}-\frac12\right)\int_{\mathbb{S}^2}|\nabla u|^2\,\text{d}v
		-\mu\log C_{|D|}
		+\mu\log\mu-\mu .
	\end{aligned}
	\]
	Since $\rho+\mu\le 4\pi$, the coefficient of the Dirichlet energy is non-positive. From Theorem \ref{Sharp-const} in the appendix we know that \(C_{|D|} = \sqrt \pi  \), where there we provide the optimal constant in general dimensions (\(n \geq 2\)), and this constant is not attained by any non-zero spinor. Thus
	\[
	\frac{\rho}{2}\log A(u)
	+\mu\log\int_{\mathbb{S}^2}e^u|\psi|^2\,\text{d}v
	-\frac12\int_{\mathbb{S}^2}|\nabla u|^2\,\text{d}v
	-Q(\psi)
	\le
	C_{\rho,\mu}.
	\]
	Rearranging gives the desired inequality.
\end{proof}
\section{Existence of least-energy solutions}
Consequently, we conclude that the functional $E(u,\psi)$ is bounded from below on $\mathcal{A}$. In general, however, this lower bound may not be attained. For the specific functions $h_i(x),\ i=1,2$ considered in Theorem \ref{main-Thm}, the infimum is indeed achieved, which guarantees the existence of minimizers with minimal energy. In particular, if ${\lambda _1}\left( {h_2},{h_1}\right) < 1$, these minimizers are nontrivial. Before proceeding to the proof Theorem \ref{main-Thm}, we examine the parameter ${\lambda _1}\left( {h_2},{h_1}\right)$. The following lemma ensures that ${\lambda _1}\left( {h_2},{h_1}\right)> 0$.
\begin{lemma}\label{Eigen-positve}
	Let $(\mathbb{S}^2,g_0)$ be the standard sphere and let $\slashiii{D}$ be the Dirac operator.
	Assume that $w\in L^\infty(\mathbb{S}^2)$ satisfies
	\[
	0<m\le w(x)\le M<\infty
	\]
	for a.e.\ $x\in \mathbb{S}^2$. Consider the weighted eigenvalue problem
	\[
	\slashiii{D}\psi=\lambda w\psi\quad \text{on } \,\mathbb{S}^2.
	\]
	Define
	\[
	E_w^-:=\bigoplus_{\lambda<0}\ker(\slashiii{D}-\lambda w)
	\]
	and
	\[
	\lambda_1(w)
	:=
	\inf_{\substack{\psi\neq0\\ \psi\perp_w E_w^-}}
	\frac{\displaystyle\int_{\mathbb{S}^2}\langle \slashiii{D}\psi,\psi\rangle\,\text{d}v}
	{\displaystyle\int_{\mathbb{S}^2}w|\psi|^2\,\text{d}v},
	\]
	where $\perp_w$ denotes orthogonality with respect to
	\[
	(\psi,\varphi)_w:=		\int_{\mathbb{S}^2}w\langle\psi,\varphi\rangle\,\text{d}v.
	\]
	Then
	\[
	\lambda_1(w)>0.
	\]
\end{lemma}
\begin{proof}
	Since $w$ is uniformly positive and bounded, the weighted norm
	\[
	\|\psi\|_{L^2_w}^2:=\int_{\mathbb{S}^2}w|\psi|^2\,\text{d}v
	\]
	is equivalent to the standard $L^2$ norm of $\psi$. Define
	\[
	T:=w^{-1}\slashiii{D}.
	\]
	Then for smooth spinors $\psi,\varphi$,
	\[
	(T\psi,\varphi)_w
	=
	\int_{\mathbb{S}^2}\langle \slashiii{D}\psi,\varphi\rangle\,\text{d}v
	=
	\int_{\mathbb{S}^2}\langle\psi,\slashiii{D}\varphi\rangle\,\text{d}v
	=
	(\psi,T\varphi)_w.
	\]
	Hence $T$ is self-adjoint on $L^2_w$. Since $\slashiii{D}$ possesses a compact resolvent on
	$\mathbb{S}^2$ and the multiplication operator $w^{-1}$ is bounded,the operator $T$ consequently inherits compact resolvent. As a result, its spectrum is purely discrete and comprises only real
	eigenvalues.  Moreover,
	\[
	T\psi=0
	\quad\Longleftrightarrow\quad
	\slashiii{D}\psi=0.
	\]
	Since $\mathbb{S}^2$ admits no harmonic spinors, $\ker \slashiii{D}=\{0\}$, hence
	$\ker T=\{0\}$. Consequently,
	\[
	0\notin \operatorname{Spec}(T),
	\]
	and thus the first positive eigenvalue of $T$ is strictly positive.
	By the spectral characterization of self-adjoint operators, this
	eigenvalue is exactly $\lambda_1(w)$. Therefore $	\lambda_1(w)>0.$
\end{proof}

\
\

\noindent\textbf{Proof of Theorem \ref{main-Thm}. }Firstly, since $h_1(-x)=h_1(x)$ by assumptions in Theorem \ref{main-Thm}, by Moser's seminal result \cite{MR339258}, we know that there exists a even function $u(x)$ satisfying
\[ - \Delta u = h_1(x){e^{2u}} - 1,\quad on\;{\mathbb{S}^2}.\]
This implies that the set \({\mathcal{A}} \) is nonempty. From Lemma \ref{lower-bound}, we can define
\begin{equation*}
	c_0 = \mathop {\inf }\limits_{\mathcal{A}} E\left( {u,\psi } \right).
\end{equation*}Then  there is a minimizing sequence $\left\{ {\left( {{u_n},{\psi _n}} \right)} \right\} \subset \mathcal{A} $ such that
\begin{equation}\label{minimizer}
	E\left( {{u_n},{\psi _n}} \right) \to c_0, ~~~ \text{ as } n \to  \infty.
\end{equation}
Notice that  $\left\{ {\left( {{u_n},{\psi _n}} \right)} \right\} $ satisfies
\begin{equation*}
	\int_{{\mathbb{S}^2}} {\left\langle {\slashiii{D}{\psi _n} -{h_2} { e^{{u_n}}}{\psi _n},{\varphi _j}} \right\rangle {\text{d}{v}} = 0} ,\;\forall j < 0.
\end{equation*}
\begin{equation*}
	\int_{{\mathbb{S}^2}} {\left\langle {\slashiii{D}{\psi _n},{\psi _n}} \right\rangle } {\text{d}{v}} = \int_{{\mathbb{S}^2}}{h_2} {{e^{{u_n}}}} {\left| {{\psi _n}} \right|^2}{\text{d}{v}}.
\end{equation*}
\begin{equation}\label{stokes-un}
	\int_{{\mathbb{S}^2}} {\left( {h_1{e^{2{u_n}}} + {h_2}{e^{{u_n}}}{{\left| {{\psi _n}} \right|}^2}} \right)} {\text{d}{v}} = 4\pi .
\end{equation}	
As the similar arguments in the proof of Theorem \ref{thm1}, we deduce that $\left\| \psi^+_n  \right\|_{H^{1/2}},\left\| \psi^-_n  \right\|_{H^{1/2}}$ and $\left\| \psi_n  \right\|_{H^{1/2}}$ are bounded. By (\ref{stokes-un}), we have \begin{equation*}4\pi {h_{1,\min}}{e^{2{{\overline u }_n}}} \leq {h_{1,\min}}\int_{{\mathbb{S}^2}} {{e^{2{u_n}}}{\text{d}{v}}}  \leq \int_{{\mathbb{S}^2}} {{h_1}{e^{2{u_n}}}{\text{d}{v}}}  \leq 4\pi. \end{equation*}
Hence
\begin{equation}\label{upbound}
	2{\overline u _n} \leq \log \frac{1}{{{h_{1,\min}}}}.
\end{equation} 
According to (\ref{minimizer}), without loss of generality, we assume
\begin{equation}\label{lowbound}
	c_0 - 1 \leq \int_{{\mathbb{S}^2}} {\left( {{{\left| {\nabla u_n} \right|}^2} + 2u_n + {h_2}{e^{u_n}}{{\left| \psi_n  \right|}^2}} \right)}{\text{d}{v}}  \leq c_0 + 1.
\end{equation}
Then from (\ref{upbound}) and (\ref{lowbound}) we have \begin{equation*}\int_{{\mathbb{S}^2}} {{{\left| {\nabla u_n} \right|}^2}{\text{d}{v}}}  \leq c_0 + 1 - 4\pi \cdot 2 {\overline u _n}
\end{equation*} and
\begin{equation}\label{4.1}
	\begin{aligned}
		4\pi  \cdot 2{\overline u _n} &\geq c_0 - 1 - \int_{{\mathbb{S}^2}} {{{\left| {\nabla {u_n}} \right|}^2}}{\text{d}{v}}  - \int_{{\mathbb{S}^2}}{h_2} {{e^{{u_n}}}{{\left| {{\psi _n}} \right|}^2}} {\text{d}{v}}  \\
		&\geq c_0 - 1 - \int_{{\mathbb{S}^2}} {{{\left| {\nabla {u_n}} \right|}^2}}{\text{d}{v}}  - C{\left( {\int_{{\mathbb{S}^2}} {{e^{2{u_n}}}} }{\text{d}{v}} \right)^{1/2}}{\left( {\int_{{\mathbb{S}^2}} {{{\left| {{\psi _n}} \right|}^4}} {\text{d}{v}}} \right)^{1/2}}  \\
		&\geq c_0 - 1 - \int_{{\mathbb{S}^2}} {{{\left| {\nabla {u_n}} \right|}^2}} {\text{d}{v}} - C\left\| {{\psi _n}} \right\|_{H^{1/2}}^2\sqrt {4\pi } {e^{\frac{1}{2}\left( {\frac{1}{{4\pi }}\int_{{\mathbb{S}^2}} {\left( {{{\left| {\nabla {u_n}} \right|}^2} + 2u} \right){\text{d}{v}}} } \right)}}  \\
		&\geq c_0 - 1 - \int_{{\mathbb{S}^2}} {{{\left| {\nabla {u_n}} \right|}^2}{\text{d}{v}}}  - C{e^{\frac{1}{{8\pi }}\int_{{\mathbb{S}^2}} {{{\left| {\nabla {u_n}} \right|}^2}} {\text{d}{v}}}} .
	\end{aligned}
\end{equation}

If we take the basic Hilbert space \begin{equation*}
	H_{even}^1\left( {{\mathbb{S}^2}} \right) \times {H^{1/2}}\left( {\Sigma {\mathbb{S}^2}} \right),
\end{equation*}
where $H_{even}^1\left( {{\mathbb{S}^2}} \right)$ is the set of all even symmetric functions in $H^{1}(\mathbb{S}^2) $. Then the classical Moser-Trudinger inequality can be considerably improved (see \cite{MR339258}), taking the form
\begin{equation*}
	\int_{{\mathbb{S}^2}} {{e^{2f}}{\text{d}{v}} \leq } 4\pi {e^{\frac{1}{{8\pi }}\int_{{\mathbb{S}^2}} {{{\left| {\nabla f} \right|}^2}{\text{d}{v}}}  + 2\overline f }},\quad \forall f \in H_{even}^1\left( {{\mathbb{S}^2}} \right).
\end{equation*}
As in (\ref{epsilon}), there exists a constant $C$ such that
\begin{equation*}
	0<C<\frac{{4\pi  - \varepsilon {{\left\| \psi_n  \right\|}_{H^{1/2}}^4}}}{{{h_{1,\max}} + {C_\varepsilon }}} \leq \int_{{\mathbb{S}^2}} {{e^{2u_n}}} {\text{d}{v}} \leq 4\pi {e^{\frac{1}{{8\pi }}\int_{{\mathbb{S}^2}} {\left( {{{\left| {\nabla u_n} \right|}^2} } \right){\text{d}{v}+ 2 \bar u_n} }}}.
\end{equation*}
It follows that
\begin{equation*}
	\int_{{\mathbb{S}^2}} {2{u_n}} {\text{d}{v}} \geq  - \frac{1}{2}\int_{{\mathbb{S}^2}} {{{\left| {\nabla {u_n}} \right|}^2}{\text{d}{v}}}  - C.\end{equation*}
In view of the right-hand side of (\ref{lowbound}) we have
\begin{equation}\label{4.2}
	{c_0} + 1 \geq E\left( {{u_n},{\psi _n}} \right) \geq \frac{1}{2}\int_{{\mathbb{S}^2}} {{{\left| {\nabla {u_n}} \right|}^2}{\text{d}{v}}}-C .\end{equation}
Combining (\ref{4.1}) and (\ref{4.2}), we find that $\bar {u}_n\geq -C$. Furthermore we know that $|\bar{u}_n|\leq C$ by (\ref{upbound}). Consequently we have that $\left\| {{u_n}} \right\|_{H^1}$ is bounded. By the Moser-Trudinger and Sobolev inequality, through standard arguments (or see \cite[Lemma3.3]{jevnikar2020existence1}), one can show that \( \mathcal{A} \) is weakly closed. Then from Ekeland's variational principle, we know there is a
\begin{equation*}
	\left( {{u_0},{\psi _0}} \right) \in H_{even}^1\left( {{\mathbb{S}^2}} \right) \times {H^{1/2}}\left( {\Sigma {\mathbb{S}^2}} \right)
\end{equation*}
such that
\begin{equation*}
	{c_0} = E\left( {{u_0},{\psi _0}} \right).
\end{equation*}
Moreover, $\left( {{u_0},{\psi _0}} \right)$ is a critical point to the functional $E\left( {u,\psi } \right)$.

By assumption, \(u^*_{h_1}\) is a minimizer of the functional \(4\pi S[u]\) on the constraint \(\mathcal{A}_0\) and satisfies
\[
-\Delta u = h_1 e^{2u} - 1, \quad \text{on } \mathbb{S}^2.
\]
Then we know
\[E\left( {u_{{h_1}}^*,0} \right) = 4\pi S\left[ {u_{{h_1}}^*} \right] .\]
By assumption, if the first eigenvalue \(\lambda_1(h_2, h_1)\) of the weighted Dirac operator \((h_2 e^{u^*_{h_1}})^{-1} \slashiii{D}\) satisfies \(\lambda_1(h_2, h_1) < 1\), then there exists a spinor \(\phi_1\) such that
\[
\slashiii{D}\phi_1 = \lambda_1(h_2, h_1) \, h_2 e^{u^*_{h_1}} \phi_1, \quad \text{on } \mathbb{S}^2.
\]
Consequently, there exists \(t > 0\) such that
\[
e^{-t} = \lambda_1(h_2, h_1).
\]
For this \(t\), we consider the family of test pairs \((u_t, \psi_t)\) defined by
\[
u_t = u^*_{h_1} - t, \qquad \psi_t = c_t \phi_1.
\]
Clearly, \((u_t, \psi_t)\) satisfies the first two equations of \(\mathcal{A}\). The third volume constraint requires
\[
4\pi(1 - e^{-2t}) = 4\pi - \int_{\mathbb{S}^2} h_1 e^{2(u^*_{h_1}-t)} \, \text{d}v = \int_{\mathbb{S}^2} h_2 e^{u_t} |\psi_t|^2 \, \text{d}v = e^{-t} c^2_t \int_{\mathbb{S}^2} h_2 e^{u^*_{h_1}} |\phi_1|^2 \, \text{d}v.
\]
Hence, it suffices to choose
\[
c^2_t = \frac{4\pi(1 - e^{-2t}) e^{t}}{\int_{\mathbb{S}^2} h_2 e^{u^*_{h_1}} |\phi_1|^2 \, \text{d}v} = \frac{4\pi(1 - \lambda_1^2(h_2, h_1))}{\lambda_1(h_2, h_1) \int_{\mathbb{S}^2} h_2 e^{u^*_{h_1}} |\phi_1|^2 \, \text{d}v},
\]
so that \((u_t, \psi_t) \in \mathcal{A}\). For any \((u, \psi) \in \mathcal{A}\), the energy functional simplifies to
\[
E(u, \psi) = \int_{\mathbb{S}^2} \left( |\nabla u|^2 + 2u - h_1 e^{2u} \right) + 4\pi.
\]
Evaluating this for the test pair yields
\[
E(u_t, \psi_t) = \int_{\mathbb{S}^2} \left( |\nabla u^*_{h_1}|^2 + 2(u^*_{h_1} - t) - h_1 e^{2u^*_{h_1}} e^{-2t} \right) \text{d}v + 4\pi.
\]
Using the identity ${\int_{{\mathbb{S}^2}} {{h_1}{e^{2u^*_{h_1}}}} {\text{d}}v = 4\pi }$, we obtain
\[
E(u_t, \psi_t) = E(u^*_{h_1}, 0) - 8\pi t - 4\pi e^{-2t} + 4\pi.
\]
Define \(f(t) = 4\pi(1 - 2t - e^{-2t})\). As noted, \(f(0) = 0\) and \(f'(t) < 0\) for \(t > 0\), which implies \(E(u_t, \psi_t) < E(u^*_{h_1}, 0)\). Therefore,
\[
c_0 < E(u^*_{h_1}, 0),
\]
so the least-energy solution $\left( u_0,\psi_0\right) $ is nontrivial.\qed

\

\section{Classification of least-energy solutions}
Under the setting in Theorem \ref{Classification}, we define
\[
(u_0,\psi_0)=\left(-\log B,\,\sqrt{1-\frac{A}{B^2}}\varphi_1\right).
\]
It is clear that  \((u_0,\psi_0)\) is a nontrivial solution to equation (\ref{A-B-key-Sphere}), and the energy is
\[
E(u_0,\psi_0)
=4\pi\left(1-2\log B-\frac{A}{B^2}
\right).
\]
We now prove that every nontrivial least-energy solution to equation (\ref{A-B-key-Sphere}) coincides with $(u_0,\psi_0)$ up to conformal automorphisms of \(\mathbb{S}^2\).

\

\noindent\textbf{Proof of Theorem \ref{Classification}. }
Let \((u,\psi)\) be a nontrivial solution as in Theorem \ref{Classification}. Define
\[
Q:=B\int_{\mathbb{S}^2}e^u|\psi|^2{\text{d}}v_{g_0}.
\]
Integrating the first equation in (\ref{Classification}) over \(\mathbb{S}^2\) yields
\begin{equation}\label{Cl-Liouville-Q}
	A\int_{\mathbb{S}^2}e^{2u}{\text{d}}v_{g_0}+Q=4\pi.
\end{equation}
On the other hand, multiplying the Dirac equation by \(\psi\) and integrating gives
\begin{equation}\label{Cl-Dirac-Q}
	\int_{\mathbb{S}^2}\langle \slashiii{D}_{g_0}\psi,\psi\rangle{\text{d}}v_{g_0}=Q.
\end{equation}
Using  \eqref{Cl-Liouville-Q} and \eqref{Cl-Dirac-Q}, the energy functional reduces to
\[
\begin{aligned}
	E(u,\psi)	&=	\int_{\mathbb{S}^2}
	\Big(	|\nabla u|^2+2u-Ae^{2u}
	\Big){\text{d}}v_{g_0}	+4\pi
	+2Q-2Q		\\
	&=\int_{\mathbb{S}^2}
	\Big(|\nabla u|^2+2u		\Big){\text{d}}v_{g_0}+Q.
\end{aligned}
\]
Next, by the sharp Onofri inequality on \(\mathbb{S}^2\),
\begin{equation}\label{Cl-Onofri}
	\int_{\mathbb{S}^2}\Big(	|\nabla u|^2+2u
	\Big){\text{d}}v_{g_0}\ge	4\pi
	\log\left(	\frac1{4\pi}		\int_{\mathbb{S}^2}e^{2u}{\text{d}}v_{g_0}
	\right).
\end{equation}
Combining \eqref{Cl-Liouville-Q} and \eqref{Cl-Onofri}, we deduce
\begin{equation}\label{Cl-energy-low}
	E(u,\psi)\geq 4\pi\log\left(\frac{4\pi-Q}{4\pi A}	\right)	+Q.
\end{equation}
Define
\[
f(Q):=	4\pi\log\left(	\frac{4\pi-Q}{4\pi A}
\right)	+Q.
\]
A direct computation shows that
\[
f'(Q)=1-\frac{4\pi}{4\pi-Q}=-\frac{Q}{4\pi-Q}<0
\qquad\text{for }Q\in(0,4\pi).
\]
Hence \(f\) is strictly decreasing.

We now derive an upper bound for \(Q\). Consider the conformal metric $g=e^{2u}g_0$ and define the transformed spinor
\[
\widetilde\psi=e^{-u/2}\psi.
\]
By conformal covariance of the Dirac operator,
\[
\slashiii{D}_g\widetilde\psi=B\widetilde\psi.
\]
Thus \(B\) is a positive eigenvalue of \(\slashiii{D}_g\), which implies $\lambda_1(\slashiii{D}_g)\le B$, where \(\lambda_1(\slashiii{D}_g)\) denotes the first positive Dirac eigenvalue of \((\mathbb{S}^2,g)\).

Applying B\"ar's sharp inequality \eqref{Bar-inq}, namely
\[
\lambda_1(\slashiii{D}_g)^2\mathrm{Area}(\mathbb{S}^2,g)\ge 4\pi,
\]
we obtain
\begin{equation}\label{Cl-Bar}
	B^2\mathrm{Area}(\mathbb{S}^2,g)\geq 4\pi.
\end{equation}
Since
\[
\mathrm{Area}(\mathbb{S}^2,g)=\int_{\mathbb{S}^2}e^{2u}{\text{d}}v_{g_0}=\frac{4\pi-Q}{A},
\]
it follows from \eqref{Cl-Bar} that
\begin{equation}\label{Cl-upbound}
	Q\le4\pi\left(	1-\frac{A}{B^2}	\right)=:Q_0.
\end{equation}
Since \(f\) is strictly decreasing, \eqref{Cl-energy-low} and \eqref{Cl-upbound} imply $E(u,\psi)\ge f(Q_0)$. A direct computation gives $f(Q_0)=	E(u_0,\psi_0)$. Therefore,
\begin{equation}\label{Cl-sharp}
	E(u,\psi)\ge E(u_0,\psi_0).
\end{equation}
Since \((u,\psi)\) is a least-energy solution, equality must hold in \eqref{Cl-sharp}. Consequently, equality holds simultaneously in the Onofri inequality and in  B\"ar's inequality. Equality in  B\"ar's inequality implies that \((\mathbb{S}^2,e^{2u}g_0)\) is a round sphere and that $\widetilde\psi=e^{-u/2}\psi$ is a first eigenspinor of \(\slashiii{D}_{e^{2u}g_0}\) corresponding to the positive eigenvalue $B$. Therefore, there exist a conformal diffeomorphism \(\Phi\in\mathrm{Conf}(\mathbb{S}^2)\) and a constant \(c_1>0\) such that
\[
g=e^{2u}g_0=c_1\,\Phi^{*}g_0.
\]
Since \(\Phi\) is a diffeomorphism, equality in \eqref{Cl-Bar} yields
\[
\operatorname{Area}(\Phi^{*}g_0)
=\operatorname{Area}(g_0) =4\pi ,
\]
and therefore
\[
\operatorname{Area}(g)=c_1\operatorname{Area}(g_0) =4\pi c_1.
\]
Combining this with \(B^{2}\operatorname{Area}(g)=4\pi\), we obtain $c_1=B^{-2}$. Hence
\[
u=\frac12\log\det(d\Phi)-\log B.
\]
Since \(\lambda_1(D_g)=B\) is a Killing spinor, there exists \(c_2\in\mathbb R\) such that
\[
\widetilde\psi=c_2\Phi^*\varphi_1,
\]
where \(\varphi_1\) is a normalized first eigenspinor on \((\mathbb{S}^2,g_0)\). Integrating the first equation of \eqref{Classification} and using
\[
\operatorname{Area}(g)
=\int_{\mathbb{S}^2}e^{2u}\text{d}v_{g_0}
=\frac{4\pi}{B^2},
\]
we obtain
\[
\int_{\mathbb{S}^2}|\widetilde\psi|^2\text{d}v_g
=\frac{4\pi}{B^2}\Bigl(1-\frac{A}{B^2}\Bigr).
\]
Since \(|\Phi^*\varphi_1|\equiv1\),
\[
\int_{\mathbb{S}^2}|\widetilde\psi|^2\text{d}v_g=c_2^2\operatorname{Area}(g)
=c_2^2\frac{4\pi}{B^2},
\]
and therefore
\[
c_2^2=1-\frac{A}{B^2}.
\]
Hence
\[
\widetilde\psi=\sqrt{1-\frac{A}{B^2}}\,\Phi^*\varphi_1.
\]
This completes the proof.\qed


\appendix
\makeatletter
\renewcommand{\@seccntformat}[1]{\ifcsname quad#1\endcsname\csname quad#1\endcsname\else\csname the#1\endcsname\quad\fi}
\renewcommand{\thesection}{Appendix \Alph{section}}
\makeatother
\renewcommand{\thetheorem}{\Alph{section}.\arabic{theorem}}
\newpage

\section{Sharp Spinorial $H^{1/2}$-Sobolev Inequality} 
Let $(\mathbb{S}^n, g_0)$, $n \ge 2$, denote the standard unit sphere, and let $\Sigma \mathbb{S}^n$ be its associated spinor bundle. Since $\slashiii{D}$ is a self-adjoint, unbounded, first-order elliptic differential operator, the spectral theorem implies that its absolute value $|\slashiii{D}| := (\slashiii{D}^2)^{1/2}$ defines a closed, non-negative quadratic form on the spinorial Sobolev space $H^{1/2}(\Sigma \mathbb{S}^n)$. We define the sharp constant associated with the critical embedding $H^{1/2}(\Sigma \mathbb{S}^n) \hookrightarrow L^{\frac{2n}{n-1}}(\Sigma \mathbb{S}^n)$ via the following quotient:
$$
C_{n,|D|} := \inf_{\psi \in H^{1/2}(\Sigma \mathbb{S}^n) \setminus \{0\}} \frac{\displaystyle \int_{\mathbb{S}^n} \langle |\slashiii{D}|\psi, \psi \rangle \, \text{d}v_{g_0}}{\displaystyle \left( \int_{\mathbb{S}^n} |\psi|^{\frac{2n}{n-1}} \text{d}v_{g_0} \right)^{\frac{n-1}{n}}},
$$
where $\langle \cdot, \cdot \rangle$ is the pointwise Hermitian inner product on the fibers of $\Sigma \mathbb{S}^n$, and $\text{d}v_{g_0}$ is the standard Riemannian volume element.
\begin{theorem}\label{Sharp-const}
	For any $n \ge 2$, the geometric optimal constant $C_{n,|D|}$ is given by:
	$$
	C_{n,|D|} = \frac{n-1}{2} \omega_n^{1/n},
	$$
	where $\omega_n := |\mathbb{S}^n|_{g_0} = \frac{2\pi^{(n+1)/2}}{\Gamma((n+1)/2)}$ is the total volume of the standard sphere. Equivalently, for every $\psi \in H^{1/2}(\Sigma \mathbb{S}^n)$, the following sharp inequality holds:
	$$
	\int_{\mathbb{S}^n} \langle |\slashiii{D}|\psi, \psi \rangle \, \text{d}v_{g_0} \ge \frac{n-1}{2} \omega_n^{1/n} \left( \int_{\mathbb{S}^n} |\psi|^{\frac{2n}{n-1}} \text{d}v_{g_0} \right)^{\frac{n-1}{n}}.
	$$
\end{theorem}
Let $P_1$ denote the conformal fractional Laplacian of order $1$ on $\mathbb{S}^n$, defined by:
$$
P_1 := \left( -\Delta_{\mathbb{S}^n} + \frac{(n-1)^2}{4} \right)^{1/2}.
$$
It is known (for example, see Xiong \cite{MR4059997}) that $P_1$ satisfies the spectral decomposition $P_1 Y_\ell = \left( \ell + \frac{n-1}{2} \right) Y_\ell$ for any spherical harmonic $Y_\ell$ of degree $\ell \in \mathbb{N}_0$.
\begin{lemma}\label{Dirac-Laplace}
	For every spinor field $\psi \in H^{1/2}(\Sigma \mathbb{S}^n)$, its pointwise norm satisfies $|\psi| \in H^{1/2}(\mathbb{S}^n)$, and one has the domination in the closed quadratic-form sense:
	$$
	\int_{\mathbb{S}^n} \langle |\slashiii{D}|\psi, \psi \rangle \, \text{d}v \ge \int_{\mathbb{S}^n} |\psi| P_1 |\psi| \, \text{d}v.
	$$
\end{lemma}
\begin{proof}
	The scalar curvature of the standard unit sphere is given by $\operatorname{Scal}_{\mathbb{S}^n} = n(n-1)$. The Lichnerowicz formula yields:
	$$
	\slashiii{D}^2 = \nabla^* \nabla + \frac{\operatorname{Scal}_{\mathbb{S}^n}}{4} = \nabla^* \nabla + \frac{n(n-1)}{4},
	$$
	where $\nabla^* \nabla$ is the connection Laplacian on $\Sigma \mathbb{S}^n$. By the Hess–Schrader–Uhlenbrock domination  theorem \cite{MR458243} for generalized Schrödinger semigroups, the spinorial heat semigroup is pointwise dominated by the scalar heat semigroup: for every $t > 0$, it holds almost everywhere on $\mathbb{S}^n$ that
	$$
	|e^{-t \slashiii{D}^2} \psi| \le e^{-t \left( -\Delta_{\mathbb{S}^n} + \frac{n(n-1)}{4} \right)} |\psi|.
	$$
	Since $\frac{n(n-1)}{4} \ge \frac{(n-1)^2}{4}$ for all $n \ge 2$, the spectral mapping theorem and the positivity of the scalar heat kernel imply:
	$$
	e^{-t \left( -\Delta_{\mathbb{S}^n} + \frac{n(n-1)}{4} \right)} \le e^{-t \left( -\Delta_{\mathbb{S}^n} + \frac{(n-1)^2}{4} \right)} = e^{-t P_1^2}.
	$$
	Thus, we arrive at the pointwise subordination-ready estimate:
	$$
	|e^{-t \slashiii{D}^2} \psi| \le e^{-t P_1^2} |\psi|.
	$$
	By virtue of Bochner's subordination formula \cite[Chapter IX]{MR617913}, for any non-negative self-adjoint operator $A$, the fractional semigroup can be represented via the generic formula:
	$$
	e^{-s A^{1/2}} = \frac{s}{2\sqrt{\pi}} \int_0^\infty t^{-3/2} e^{-\frac{s^2}{4t}} e^{-tA} \, \text{d}t, \quad s > 0.
	$$
	Applying this representation respectively to $A = \slashiii{D}^2$ and $A = P_1^2$, the positivity of the integrand yields:
	$$
	\left| e^{-s \left|\slashiii{D}\right|} \psi\right|  \le e^{-s P_1} |\psi| \quad (\text{a.e. } \mathbb{S}^n, \, \forall s > 0).
	$$
	Set $f=|\psi|\in L^2(\mathbb S^n)$. From the pointwise domination
	\[
	|e^{-s|\slashiii{D}|}\psi|\le e^{-sP_1}f
	\]
	and the positivity of \(e^{-sP_1}\), we obtain
	\[
	\operatorname{Re}\langle e^{-s|\slashiii{D}|}\psi,\psi\rangle_{L^2}
	\le
	\int_{\mathbb S^n}|e^{-s|\slashiii{D}|}\psi|\,|\psi|\,dv
	\le
	\langle e^{-sP_1}f,f\rangle_{L^2}.
	\]
	Hence
	\[
	\frac{\|\psi\|_2^2-\operatorname{Re}\langle e^{-s|\slashiii{D}|}\psi,\psi\rangle}{s}
	\ge
	\frac{\|f\|_2^2-\langle e^{-sP_1}f,f\rangle}{s}.
	\]
	Since \(\psi\in\mathcal D(|\slashiii{D}|^{1/2})=H^{1/2}(\Sigma\mathbb S^n)\), the left-hand side has a finite limit as \(s\to0\). Therefore the right-hand side is uniformly bounded as \(s\to0\), and by the standard semigroup characterization of closed quadratic forms, \(f\in\mathcal D(P_1^{1/2})=H^{1/2}(\mathbb S^n)\). For a non-negative self-adjoint operator \(A\), its closed quadratic form satisfies
	\[
	\|A^{1/2}u\|_2^2
	=
	\lim_{s\to0}
	\frac{\|u\|_2^2-\langle e^{-sA}u,u\rangle_{L^2}}{s},
	\qquad u\in\mathcal D(A^{1/2}),
	\]
	(see \cite[Chapter VI]{MR203473}). Applying this to \(A=|\slashiii{D}|\) and \(A=P_1\) gives
	$$
	\langle |\slashiii{D}|\psi, \psi \rangle_{L^2(\Sigma \mathbb{S}^n)} \ge \langle P_1 f, f \rangle_{L^2(\mathbb{S}^n)}.
	$$
	This completes the proof.
\end{proof}
\begin{lemma}\label{Beckner Inequality}
	For every scalar function $f \in H^{1/2}(\mathbb{S}^n)$, the following inequality holds and the constant is sharp:
	$$\int_{\mathbb{S}^n} f P_1 f \, \text{d}v \ge \frac{n-1}{2} \omega_n^{1/n} \left( \int_{\mathbb{S}^n} |f|^{\frac{2n}{n-1}} \, \text{d}v \right)^{\frac{n-1}{n}}.$$
\end{lemma}
\begin{proof}
	This is the classical Beckner fractional Sobolev inequality on the conformal sphere (see Beckner \cite{MR1230930}). To calibrate the normalization, substituting the constant function $f \equiv 1$ yields $P_1 1 = \frac{n-1}{2}$. Direct integration then shows that the ratio equals exactly $\frac{n-1}{2} \omega_n^{1 - \frac{n-1}{n}} = \frac{n-1}{2} \omega_n^{1/n}$.
\end{proof}

To establish that the constant is optimal, we construct a sequence of localized truncations modeled on the Euclidean fractional Sobolev extremal. Let $U(x) = (1 + |x|^2)^{-\frac{n-1}{2}}$ for $x \in \mathbb{R}^n$ be the standard bubble optimizing the sharp fractional Sobolev inequality on $\mathbb{R}^n$, satisfying:
$$
\frac{\displaystyle \int_{\mathbb{R}^n} U (-\Delta)^{1/2} U \, \text{d}x}{\displaystyle \left( \int_{\mathbb{R}^n} U^{\frac{2n}{n-1}} \, \text{d}x \right)^{\frac{n-1}{n}}} = \frac{n-1}{2} \omega_n^{1/n}.
$$
Let $\eta \in C_c^\infty(\mathbb{R}^n)$ be a radial cut-off function such that $\mathbf{1}_{B_1(0)} \le \eta \le \mathbf{1}_{B_2(0)}$. For $R > 1$, define $U_R(x) := \eta(x/R) U(x)$. 
\begin{lemma}
	Let $n\ge 2$. Let $\eta\in C_c^\infty(\mathbb R^n)$ satisfy
	\[
	0\le \eta\le 1,\qquad
	\eta\equiv 1 \ \text{in } B_1(0),\qquad
	\eta\equiv 0 \ \text{in } \mathbb R^n\setminus B_2(0).
	\]
	For $R>1$, set $\eta_R(x):=\eta(x/R).$ Then, for every $u\in \dot H^{1/2}(\mathbb R^n)$,
	\[
	\eta_R u\to u
	\qquad\text{in } \dot H^{1/2}(\mathbb R^n)
	\]
	as $R\to\infty$. In particular, if
	\[
	U(x):=(1+|x|^2)^{-\frac{n-1}{2}},
	\]
	then
	\[
	\eta_R U\to U
	\qquad\text{in } \dot H^{1/2}(\mathbb R^n).
	\]
\end{lemma}

\begin{proof}
	We use the Gagliardo characterization of $\dot H^{1/2}(\mathbb R^n)$:
	\[
	[w]_{\dot H^{1/2}(\mathbb R^n)}^2:=
	\iint_{\mathbb R^n\times\mathbb R^n}
	\frac{|w(x)-w(y)|^2}{|x-y|^{n+1}}\,dx\,dy.
	\]
	For $u\in \dot H^{1/2}(\mathbb R^n)$, we always take its Sobolev representative, which belongs to
	$L^{2n/(n-1)}(\mathbb R^n)$ by the homogeneous Sobolev embedding.
	
	Set $\chi_R:=1-\eta_R$.	It suffices to prove
	\[
	[\chi_Ru]_{\dot H^{1/2}(\mathbb R^n)}\to0.
	\]
	Let $v_R:=\chi_Ru$.	Then
	\[
	v_R(x)-v_R(y)
	=
	\chi_R(x)\bigl(u(x)-u(y)\bigr)
	+
	u(y)\bigl(\chi_R(x)-\chi_R(y)\bigr).
	\]
	Hence
	\[
	[v_R]_{\dot H^{1/2}}^2
	\le C(I_R+J_R),
	\]
	where
	\[
	I_R
	:=
	\iint_{\mathbb R^n\times\mathbb R^n}
	\chi_R(x)^2
	\frac{|u(x)-u(y)|^2}{|x-y|^{n+1}}
	\,dx\,dy
	\]
	and
	\[
	J_R
	:=
	\iint_{\mathbb R^n\times\mathbb R^n}
	|u(y)|^2
	\frac{|\chi_R(x)-\chi_R(y)|^2}{|x-y|^{n+1}}
	\,dx\,dy.
	\]
	
	We first show that $I_R\to0$. Since $0\le \chi_R\le1$ and
	\[
	\chi_R(x)\to0
	\qquad\text{for every }x\in\mathbb R^n,
	\]
	we have pointwise convergence
	\[
	\chi_R(x)^2
	\frac{|u(x)-u(y)|^2}{|x-y|^{n+1}}
	\to0
	\]
	for almost every $(x,y)$. Moreover,
	\[
	\chi_R(x)^2
	\frac{|u(x)-u(y)|^2}{|x-y|^{n+1}}
	\le
	\frac{|u(x)-u(y)|^2}{|x-y|^{n+1}},
	\]
	and the right-hand side is integrable on
	$\mathbb R^n\times\mathbb R^n$. Therefore, by dominated convergence,
	\[
	I_R\to0.
	\]
	
	It remains to prove that $J_R\to0$. For $y\in\mathbb R^n$, define
	\[
	K_R(y)
	:=
	\int_{\mathbb R^n}
	\frac{|\chi_R(x)-\chi_R(y)|^2}{|x-y|^{n+1}}\,dx.
	\]
	Then, by Tonelli's theorem,
	\[
	J_R
	=
	\int_{\mathbb R^n}
	|u(y)|^2K_R(y)\,dy.
	\]
	
	Write
	\[
	\chi:=1-\eta,
	\qquad
	\chi_R(x)=\chi(x/R).
	\]
	After the change of variables $x=RX$ and with $a:=y/R$, we obtain
	\[
	K_R(y)
	=
	R^{-1}K(y/R),
	\]
	where
	\[
	K(a)
	:=
	\int_{\mathbb R^n}
	\frac{|\chi(X)-\chi(a)|^2}{|X-a|^{n+1}}\,dX.
	\]
	
	We claim that
	\[
	K(a)\le \frac{C}{|a|}
	\qquad\text{for all }a\neq0.
	\]
	First assume $0<|a|\le4$. Since $\chi\in C^\infty(\mathbb R^n)$ and
	$\nabla\chi$ is bounded,
	\[
	|\chi(X)-\chi(a)|
	\le C|X-a|.
	\]
	Therefore,
	\[
	\int_{|X-a|<1}
	\frac{|\chi(X)-\chi(a)|^2}{|X-a|^{n+1}}\,dX
	\le
	C\int_{|X-a|<1}|X-a|^{1-n}\,dX
	\le C.
	\]
	On the other hand, using only the boundedness of $\chi$,
	\[
	\int_{|X-a|\ge1}
	\frac{|\chi(X)-\chi(a)|^2}{|X-a|^{n+1}}\,dX
	\le
	C\int_{|X-a|\ge1}|X-a|^{-n-1}\,dX
	\le C.
	\]
	Thus
	\[
	K(a)\le C
	\qquad\text{for }0<|a|\le4,
	\]
	and hence
	\[
	K(a)\le \frac{C}{|a|}
	\qquad\text{for }0<|a|\le4.
	\]
	Next assume $|a|>4$. Since $\eta\equiv0$ outside $B_2(0)$, we have
	\[
	\chi(a)=1,
	\]
	and $\chi(X)-1=-\eta(X)$ is supported in $B_2(0)$. Hence
	\[
	K(a)
	=
	\int_{B_2(0)}
	\frac{|\chi(X)-1|^2}{|X-a|^{n+1}}\,dX.
	\]
	For $X\in B_2(0)$ and $|a|>4$,
	\[
	|X-a|\ge |a|-|X|\ge |a|-2\ge \frac{|a|}{2}.
	\]
	Consequently,
	\[
	K(a)
	\le
	C|a|^{-n-1}
	\le
	\frac{C}{|a|}.
	\]
	The claim follows.
	
	Thus, for every \(y\neq0\),
	\[
	K_R(y)\le \frac{C}{|y|}.
	\]
	On the other hand, for fixed \(y\neq0\), \(y/R\to0\), and since \(K\) is
	uniformly bounded near the origin, 
	\[
	K_R(y)=R^{-1}K(y/R)\to0.
	\]
	
	We now use the fractional Hardy inequality, see for instance
	Frank et al. \cite{MR2425175}. Since $n\ge2$,
	\[
	\int_{\mathbb R^n}
	\frac{|u(y)|^2}{|y|}\,dy
	\le
	C_n [u]_{\dot H^{1/2}(\mathbb R^n)}^2.
	\]
	Hence
	\[
	|u(y)|^2K_R(y)
	\le
	C\frac{|u(y)|^2}{|y|}
	\in L^1(\mathbb R^n).
	\]
	By dominated convergence,
	\[
	J_R
	=
	\int_{\mathbb R^n}|u(y)|^2K_R(y)\,dy
	\to0.
	\]
	
	Combining the estimates for $I_R$ and $J_R$, we obtain
	\[
	[\chi_Ru]_{\dot H^{1/2}(\mathbb R^n)}^2\to0.
	\]
	Since
	\[
	u-\eta_Ru=\chi_Ru,
	\]
	we conclude that
	\[
	\eta_Ru\to u
	\qquad\text{in } \dot H^{1/2}(\mathbb R^n).
	\]
	
	It remains to verify that \(U\in \dot H^{1/2}(\mathbb R^n)\). 
	Recall that
	\[
	U(x)=(1+|x|^2)^{-\frac{n-1}{2}}.
	\]
	belongs to Bessel-Kernel family. By the classical Fourier transform formula for Bessel kernels \cite{MR304972}, there exist
	constants \(C,c>0\) such that
	\[
	|\widehat U(\xi)|\le C|\xi|^{-1}e^{-c|\xi|}
	\qquad (\xi\neq0).
	\]
	Hence
	\[
	\begin{aligned}
		\|U\|_{\dot H^{1/2}(\mathbb R^n)}^2
		&=
		\int_{\mathbb R^n}|\xi|\,|\widehat U(\xi)|^2\,d\xi  \\
		&\le
		C\int_{\mathbb R^n}|\xi|^{-1}e^{-2c|\xi|}\,d\xi  \\
		&=
		C\omega_{n-1}\int_0^\infty r^{n-2}e^{-2cr}\,dr
		<\infty,
	\end{aligned}
	\]
	since \(n\ge2\). Therefore \(U\in\dot H^{1/2}(\mathbb R^n)\), and the cutoff
	approximation proved above gives
	\[
	\eta_RU\to U
	\qquad\text{in } \dot H^{1/2}(\mathbb R^n).
	\]
\end{proof}

Since
\[
U^{\frac{2n}{n-1}}=(1+|x|^2)^{-n}\in L^1(\mathbb R^n),
\]
we have \(U\in L^{\frac{2n}{n-1}}(\mathbb R^n)\). Moreover, since \(0\le\eta(x/R)\le1\), \(\eta(x/R)\to1\) pointwise, dominated convergence gives
\[
U_R\to U
\quad\text{in}\quad
L^{\frac{2n}{n-1}}(\mathbb R^n).
\]
Consequently,
\[
U_R\to U
\quad\text{in}\quad
L^{\frac{2n}{n-1}}(\mathbb R^n)\cap \dot H^{1/2}(\mathbb R^n).
\]

Fix a base point $p \in \mathbb{S}^n$. Let $\exp_p: B_\rho(0) \subset \mathbb{R}^n \to \mathbb{S}^n$ be the Riemannian normal coordinates around $p$. In this coordinate ball, we pick a synchronous local spin frame and fix a unit spinor $\xi_0$ in the model fiber (so that $|\xi_0|=1$).

For fixed $R$ and a sufficiently small scaling parameter $\varepsilon > 0$, we define the test spinor sequence by:
$$
\psi_{\varepsilon,R}(\exp_p(\varepsilon y)) = \varepsilon^{-\frac{n-1}{2}} U_R(y) \xi_0, \quad y \in B_{\rho/\varepsilon}(0),
$$
and set $\psi_{\varepsilon,R} = 0$ outside the geodesic ball. Since \(U_R\) is compactly supported and, for small \(\varepsilon\), its support is strictly contained in the coordinate ball, the zero extension defines a smooth spinor.

Using the normal coordinate expansion of the volume element $$
\text{d}v_{g_0}(\exp_p(\varepsilon y)) = \left( 1 + O(\varepsilon^2 |y|^2) \right) \varepsilon^n \text{d}y,
$$
and noticing that the critical scaling satisfies $\varepsilon^{-\frac{n-1}{2} \cdot \frac{2n}{n-1}} \cdot \varepsilon^n = 1$, a direct integration shows:
\begin{equation}\label{denominator}
	\lim_{\varepsilon \to 0} \int_{\mathbb{S}^n} |\psi_{\varepsilon,R}|^{\frac{2n}{n-1}} \, \text{d}v = \int_{\mathbb{R}^n} U_R(y)^{\frac{2n}{n-1}} \, \text{d}y.
\end{equation}
Next, we analyze the energy form. 
\begin{lemma}
	Let \(p\in\mathbb S^n\), and choose geodesic normal coordinates centered at
	\(p\). After fixing a smooth local orthonormal spin frame, we identify the
	spinor bundle over the coordinate ball with the trivial bundle
	\(B_\rho(0)\times\mathbb C^N\). Thus a compactly supported spinor in the
	coordinate ball may be viewed as a \(\mathbb C^N\)-valued function. For \(\Phi\in C_c^\infty(B_R(0),\mathbb C^N)\), define
	\[
	\Phi_\varepsilon(\exp_p(\varepsilon y))
	=
	\varepsilon^{-\frac{n-1}{2}}\Phi(y),
	\]
	and set \(\Phi_\varepsilon=0\) outside \(B_{\varepsilon R}(p)\). Then
	\begin{equation}\label{numerator}
	\lim_{\varepsilon\to0}
	\int_{\mathbb S^n}
	\langle |\slashiii{D}|\Phi_\varepsilon,\Phi_\varepsilon\rangle\,dv_{g_0}
	=
	\int_{\mathbb R^n}
	\langle (-\Delta_{\mathbb R^n})^{1/2}\Phi,\Phi\rangle_{\mathbb C^N}\,dy,
	\end{equation}
	where \((-\Delta_{\mathbb R^n})^{1/2}\) denotes the Fourier multiplier
	\[
	\widehat{(-\Delta_{\mathbb R^n})^{1/2}\Phi}(\xi)	=|\xi|\widehat{\Phi}(\xi),
	\]
	acting componentwise on \(\mathbb C^N\)-valued functions.
\end{lemma}

\begin{proof}
	Fix $\chi \in C_c^\infty(B_\rho(p))$ with $\chi \equiv 1$ on
	$B_{\rho/2}(p)$ and $0 \le \chi \le 1$. For all $\varepsilon > 0$ small
	enough that $\varepsilon R < \rho/2$, the support of $\Phi_\varepsilon$ is
	contained in $B_{\rho/2}(p)$, so $\Phi_\varepsilon = \chi\Phi_\varepsilon$.
	Using this and the self-adjointness of $\chi$ as a multiplication operator
	($\langle u,\chi v\rangle = \langle \chi u, v\rangle$), we compute
	\[
	\begin{aligned}
		\langle |\slashiii{D}|\Phi_\varepsilon,\Phi_\varepsilon\rangle_{L^2(\mathbb{S}^n)}
		&= \langle |\slashiii{D}|(\chi\Phi_\varepsilon),\chi\Phi_\varepsilon
		\rangle_{L^2(\mathbb{S}^n)}\\
		&= \langle \chi\,|\slashiii{D}|\,\chi\,\Phi_\varepsilon,\Phi_\varepsilon
		\rangle_{L^2(\mathbb{S}^n)}.
	\end{aligned}
	\]
	Set $T := \chi\,|\slashiii{D}|\,\chi$. In the chosen coordinate chart and spin
	trivialization, $T$ may be represented as a properly supported classical
	pseudodifferential operator of order one on $\mathbb{R}^n$, after an
	arbitrary smooth extension of its local symbol outside the chart. This
	extension does not affect the quadratic form evaluated on $\Phi_\varepsilon$
	for sufficiently small $\varepsilon$, since $\operatorname{supp}\Phi_\varepsilon$
	lies strictly inside the chart. The Schwartz kernel of $T$ equals
	$K_T(x,z) = \chi(x)\,K_{|\slashiii{D}|}(x,z)\,\chi(z)$, which is compactly supported in
	$(x,z)$ but retains the standard conormal singularity along the diagonal
	$\{x = z\}$. The Kohn--Nirenberg symbol of \(T\) admits a classical asymptotic
	expansion. Fix an integer \(N_0\ge n+2\). We write it in the finite form
	\[
	a(x,\xi)
	=
	a_1(x,\xi)+a_0(x,\xi)+a_{-1}(x,\xi)+\cdots+a_{-N_0}(x,\xi)
	+r_{-N_0-1}(x,\xi),
	\]
	where \(a_j\in S^j\) and \(r_{-N_0-1}\in S^{-N_0-1}\). The principal symbol is
	\[
	a_1(x,\xi)=|\xi|_{g(x)}\,\mathrm{Id}_{\mathbb C^N},
	\qquad
	|\xi|_{g(x)}=\sqrt{g^{ij}(x)\xi_i\xi_j}.
	\]
	As usual, this identity for the principal symbol is understood on the
	high-frequency region \(|\xi|\ge c>0\); the low-frequency part is treated
	separately below by the fixed cutoff \(\varphi\). The lower-order terms \(a_j\), \(0\ge j\ge -N_0\), encode curvature,
	spin-connection, coordinate, and quantization contributions. The remainder
	\(r_{-N_0-1}\) is of sufficiently negative order.
	
	Representing $T$ by its Kohn--Nirenberg symbol up to a smoothing operator
	$R_\infty \in \Psi^{-\infty}$:
	\begin{equation}\label{eq:osc}
		\langle T\Phi_\varepsilon,\Phi_\varepsilon\rangle_{L^2}
		= (2\pi)^{-n}
		\iiint
		e^{i(x-z)\cdot\xi}\,
		\bigl\langle a(x,\xi)\Phi_\varepsilon(z),\Phi_\varepsilon(x)\bigr\rangle
		\sqrt{g(x)}\,\mathrm{d}z\,\mathrm{d}\xi\,\mathrm{d}x
		\;+\; \langle R_\infty\Phi_\varepsilon,\Phi_\varepsilon\rangle_{L^2}.
	\end{equation}
	Since $R_\infty$ has smooth kernel $K_{R_\infty}$, the remainder satisfies
	$|\langle R_\infty\Phi_\varepsilon,\Phi_\varepsilon\rangle|
	\le \|K_{R_\infty}\|_{L^\infty}\|\Phi_\varepsilon\|_{L^1}^2$.
	Changing variables $x = \exp_p(\varepsilon X)$ and using
	$\Phi_\varepsilon(\exp_p(\varepsilon X)) = \varepsilon^{-(n-1)/2}\Phi(X)$
	together with $\mathrm{d}v_{g_0} = \varepsilon^n\sqrt{g(\varepsilon X)}\,\mathrm{d}X$
	gives
	\[
	\|\Phi_\varepsilon\|_{L^1(\mathbb{S}^n)}
	= \varepsilon^{(n+1)/2}\bigl(\|\Phi\|_{L^1} + O_R(\varepsilon^2)\bigr),
	\]
	so $\|\Phi_\varepsilon\|_{L^1}^2 = O_R(\varepsilon^{n+1}) \to 0$, and thus
	the smoothing remainder is $o(1)$.
	
	In the main integral of \eqref{eq:osc}, substitute $x = \varepsilon X$,
	$z = \varepsilon Z$, $\xi = \varepsilon^{-1}\eta$, so
	$(x-z)\cdot\xi = (X-Z)\cdot\eta$ and
	$\mathrm{d}x\,\mathrm{d}z\,\mathrm{d}\xi = \varepsilon^n\,\mathrm{d}X\,\mathrm{d}Z\,\mathrm{d}\eta$.
	The two factors $\varepsilon^{-(n-1)/2}$ from $\Phi_\varepsilon(\varepsilon X)$
	and $\Phi_\varepsilon(\varepsilon Z)$, combined with $\varepsilon^n$, give the
	overall prefactor $\varepsilon^{-(n-1)/2}\cdot\varepsilon^{-(n-1)/2}\cdot
	\varepsilon^n = \varepsilon$. Hence
	\begin{equation}\label{eq:rescaled}
		\langle T\Phi_\varepsilon,\Phi_\varepsilon\rangle_{L^2}
		= (2\pi)^{-n}
		\iiint
		e^{i(X-Z)\cdot\eta}\,
		\bigl\langle
		\varepsilon\,a(\varepsilon X,\varepsilon^{-1}\eta)\,\Phi(Z),
		\Phi(X)
		\bigr\rangle
		\sqrt{g(\varepsilon X)}\,\mathrm{d}Z\,\mathrm{d}\eta\,\mathrm{d}X
		+ o(1).
	\end{equation}
	
	The density factor $\sqrt{g(\varepsilon X)}$ is a smooth $S^0$-multiplier
	satisfying $\sqrt{g(\varepsilon X)} = 1 + O_R(\varepsilon^2)$ uniformly for
	$|X| \le R$. Multiplying the rescaled symbols below by this factor therefore
	preserves all symbol-seminorm bounds, up to an additional $O_R(\varepsilon^2)$
	contribution that is subsumed in the $o_R(1)$ error.
	
	\smallskip
	\noindent\emph{(I) Lower-order terms.}
	For each \(j=0,-1,\ldots,-N_0\), put
	\[
	b_{\varepsilon,j}(X,\eta)
	:=
	\varepsilon\,a_j(\varepsilon X,\varepsilon^{-1}\eta).
	\]
	We split \(b_{\varepsilon,j}\) into low and high frequencies. Let
	\(\varphi\in C_c^\infty(\mathbb R^n)\) satisfy \(\varphi\equiv1\) near
	\(\eta=0\).
	
	For the low-frequency part we do not use an \(L^2\)-operator norm estimate.
	Since \(\Phi\) is fixed and compactly supported, \(\varphi b_{\varepsilon,j}\)
	is integrated only over compact \(X,Z,\eta\)-sets. On the support of \(\Phi\) and \(\varphi\), we have \(|X|\le R\) and
	\(\eta\in \operatorname{supp}\varphi\). Since \(a_j\in S^j\) with \(j\le0\),
	\[
	\|a_j(\varepsilon X,\varepsilon^{-1}\eta)\|_{\operatorname{End}(\mathbb C^N)}
	\le C_R.
	\]
	Hence
	\[
	\|b_{\varepsilon,j}(X,\eta)\|_{\operatorname{End}(\mathbb C^N)}
	=
	\varepsilon
	\|a_j(\varepsilon X,\varepsilon^{-1}\eta)\|_{\operatorname{End}(\mathbb C^N)}
	\le C_R\varepsilon.
	\]
	Therefore, using \(|\langle Av,w\rangle|\le \|A\|\,|v|\,|w|\) and integration by separation of variables,
	\[
	\begin{aligned}
		\left|
		\left\langle
		\operatorname{Op}(\varphi b_{\varepsilon,j})\Phi,\Phi
		\right\rangle
		\right|
		&\le
		C
		\iiint
		|\varphi(\eta)|
		\|b_{\varepsilon,j}(X,\eta)\|
		|\Phi(Z)|\,|\Phi(X)|
		\,dZ\,d\eta\,dX  \\
		&\le
		C_R\varepsilon
		\|\varphi\|_{L^1}
		\|\Phi\|_{L^1}^2
		=o_R(1).
	\end{aligned}
	\]
	
	For the high-frequency part \((1-\varphi)b_{\varepsilon,j}\), the estimates are
	uniform away from \(\eta=0\). For all multi-indices \(\alpha,\beta\),
	\[
	\partial_X^\alpha\partial_\eta^\beta b_{\varepsilon,j}(X,\eta)
	=
	\varepsilon^{1+|\alpha|-|\beta|}
	(\partial_x^\alpha\partial_\xi^\beta a_j)(\varepsilon X,\varepsilon^{-1}\eta).
	\]
	On \(\operatorname{supp}(1-\varphi)\), \(|\eta|\ge c>0\), and therefore
	\[
	|\partial_X^\alpha\partial_\eta^\beta b_{\varepsilon,j}(X,\eta)|
	\le
	C_{\alpha\beta R}\varepsilon^{1-j}(1+|\eta|)^{j-|\beta|}.
	\]
	Thus
	\[
	(1-\varphi)b_{\varepsilon,j}\in \varepsilon^{1-j}S^j
	\subset \varepsilon S^0 .
	\]
	Combining the low-frequency quadratic-form estimate with the high-frequency
	operator estimate, we obtain
	\[
	\bigl|\langle\operatorname{Op}(b_{\varepsilon,j})\Phi,\Phi\rangle\bigr|
	=o_R(1).
	\]
	Since the number of indices \(j=0,-1,\ldots,-N_0\) is finite, summing over
	these terms still gives an \(o_R(1)\) contribution to the quadratic form.	
	
	It remains to treat the finite-order remainder. Define
	\[
	b_{\varepsilon,\mathrm{rem}}(X,\eta)
	:=
	\varepsilon\,r_{-N_0-1}(\varepsilon X,\varepsilon^{-1}\eta).
	\]
	We split it again into low and high frequencies. The low-frequency contribution
	is estimated directly as above:
	\[
	\left|
	\left\langle
	\operatorname{Op}(\varphi b_{\varepsilon,\mathrm{rem}})\Phi,\Phi
	\right\rangle
	\right|
	=o_R(1),
	\]
	because \(\varphi b_{\varepsilon,\mathrm{rem}}\) is integrated only over compact
	\(X,Z,\eta\)-sets and satisfies
	\[
	\|\varphi(\eta)b_{\varepsilon,\mathrm{rem}}(X,\eta)\|_{End(\mathbb{C}^N)}
	\le C_R\varepsilon .
	\]
	For the high-frequency part, since \(r_{-N_0-1}\in S^{-N_0-1}\), the same
	rescaling calculation gives
	\[
	(1-\varphi)b_{\varepsilon,\mathrm{rem}}
	\in
	\varepsilon^{N_0+2}S^{-N_0-1}
	\subset	o_R(1)S^0.
	\]
	Thus its \(L^2\to L^2\) operator norm is \(o_R(1)\), and hence
	\[
	\left|
	\left\langle
	\operatorname{Op}((1-\varphi)b_{\varepsilon,\mathrm{rem}})\Phi,\Phi
	\right\rangle
	\right|
	=o_R(1).
	\]
	\smallskip
	\noindent\emph{(II) Principal symbol error.}
	Define
	\[
	r_\varepsilon(X,\eta) := \sqrt{g(\varepsilon X)}\,|\eta|_{g(\varepsilon X)} - |\eta|.
	\]
	We decompose $|\eta| = (1-\varphi(\eta))|\eta| + \varphi(\eta)|\eta|$, where
	$\varphi \in C_c^\infty(\mathbb{R}^n)$ satisfies $\varphi \equiv 1$ near
	$\eta = 0$. The high-frequency part $(1-\varphi)|\eta|$ is a classical symbol
	of order one, smooth away from the origin and supported in $|\eta| \ge c > 0$.
	
	The low-frequency part is treated directly at the level of quadratic forms.
	On the compact \(X\)-support of \(\Phi\) and on \(\operatorname{supp}\varphi\),
	the normal-coordinate expansion gives
	\[
	g^{ij}(\varepsilon X)=\delta^{ij}+O_R(\varepsilon^2),
	\qquad
	\sqrt{g(\varepsilon X)}=1+O_R(\varepsilon^2).
	\]
	Hence, on $\operatorname{supp}\varphi$,
	\[
	\left|	\sqrt{g(\varepsilon X)}\,|\eta|_{g(\varepsilon X)}-|\eta|
	\right|\leq \left| {(\sqrt {g(\varepsilon X)}  - 1)|\eta {|_{g(\varepsilon X)}}} \right| + \left| {|\eta {|_{g(\varepsilon X)}} - |\eta |} \right|	\le C_R\varepsilon^2|\eta|
	\le C_R\varepsilon^2.
	\]
	Therefore
	\[
	\begin{aligned}
		&\left|	(2\pi)^{-n}	\iiint
		e^{i(X-Z)\cdot\eta}
		\varphi(\eta)	r_\varepsilon(X,\eta)
		\langle \Phi(Z),\Phi(X)\rangle
		\,dZ\,d\eta\,dX	\right| \\
		&\qquad\le	C_R\varepsilon^2
		\|\varphi\|_{L^1}\|\Phi\|_{L^1}^2
		=o_R(1).
	\end{aligned}
	\] 
	The low-frequency contribution has therefore been controlled directly at the
	quadratic-form level. In the remaining high-frequency region, \(|\eta|\) is a
	smooth classical symbol. 
	
	For the
	high-frequency part, $g_{ij}(\varepsilon X) = \delta_{ij} + O_R(\varepsilon^2)$
	uniformly for $|X| \le R$, and the same chain-rule argument shows that the high-frequency error symbol \((1-\varphi)r_\varepsilon\) belongs to
	\(\varepsilon^2 S^1\) on the compact \(X\)-support. Therefore
	\[
	\operatorname{Op}((1-\varphi)r_\varepsilon):
	H^{1/2}(\mathbb R^n;\mathbb C^N)
	\to
	H^{-1/2}(\mathbb R^n;\mathbb C^N)
	\]
	has operator norm \(O_R(\varepsilon^2)\), and since
	\(\Phi\in C_c^\infty\subset H^{1/2}\),
	\[
	\bigl|	\langle	\operatorname{Op}((1-\varphi)r_\varepsilon)\Phi,\Phi	\rangle	\bigr|	\le	C_R\varepsilon^2\|\Phi\|_{H^{1/2}}^2
	=	O_R(\varepsilon^2).
	\]
	Collecting all contributions:
	\begin{equation}\label{eq:combined}
		\langle |\slashiii{D}|\Phi_\varepsilon,\Phi_\varepsilon\rangle_{L^2(\mathbb{S}^n)}
		= (2\pi)^{-n}
		\iiint
		e^{i(X-Z)\cdot\eta}\,|\eta|\,
		\bigl\langle\Phi(Z),\Phi(X)\bigr\rangle
		\,\mathrm{d}Z\,\mathrm{d}\eta\,\mathrm{d}X
		+ o_R(1).
	\end{equation}
	
	The triple integral in \eqref{eq:combined} is interpreted as the quadratic
	form of the Fourier multiplier $\operatorname{Op}(|\eta|)$. The Fourier multiplier \(\operatorname{Op}(|\eta|)\) is understood in the
	standard quadratic-form sense; the preceding cutoff decomposition justifies
	passing from the smooth symbol pieces to this multiplier. Since
	$\Phi \in C_c^\infty$, we have $\hat\Phi \in \mathcal{S}(\mathbb{R}^n;\mathbb{C}^N)$,
	and Plancherel's theorem gives
	\[
	(2\pi)^{-n}
	\iiint
	e^{i(X-Z)\cdot\eta}\,|\eta|\,
	\langle\Phi(Z),\Phi(X)\rangle
	\,\mathrm{d}Z\,\mathrm{d}\eta\,\mathrm{d}X
	= \langle\operatorname{Op}(|\eta|)\Phi,\Phi\rangle
	= (2\pi)^{-n}\int_{\mathbb{R}^n}|\eta|\,|\hat\Phi(\eta)|^2\,\mathrm{d}\eta.
	\]
	This equals $\int_{\mathbb{R}^n}\langle(-\Delta)^{1/2}\Phi,\Phi\rangle\,\mathrm{d}X$.
	The identification is consistent with the flat-space Dirac operator
	$\slashiii{D}_{\mathbb{R}^n}$, which satisfies
	\[
	\slashiii{D}_{\mathbb{R}^n}^2 = -\Delta\,\mathrm{Id}_{\mathbb{C}^N},
	\qquad\text{hence}\qquad
	|\slashiii{D}_{\mathbb{R}^n}|
	= \bigl(\slashiii{D}_{\mathbb{R}^n}^2\bigr)^{1/2}
	= (-\Delta)^{1/2}\,\mathrm{Id}_{\mathbb{C}^N}.
	\]
	Inserting into \eqref{eq:combined} and letting $\varepsilon \to 0$ completes
	the proof.
\end{proof}

\noindent\textbf{Proof of Theorem \ref{Sharp-const}. } Combining Lemma \ref{Dirac-Laplace} and Lemma \ref{Beckner Inequality} with $f = |\psi|$, we deduce that for any non-zero $\psi \in H^{1/2}(\Sigma \mathbb{S}^n)$:
$$
\int_{\mathbb{S}^n} \langle |\slashiii{D}|\psi, \psi \rangle \, \text{d}v \ge \int_{\mathbb{S}^n} |\psi| P_1 |\psi| \, \text{d}v \ge \frac{n-1}{2} \omega_n^{1/n} \left( \int_{\mathbb{S}^n} |\psi|^{\frac{2n}{n-1}} \, \text{d}v \right)^{\frac{n-1}{n}}.
$$
Taking the infimum over all non-zero spinors, we obtain:
$$
C_{n,|D|} \ge \frac{n-1}{2} \omega_n^{1/n}.
$$
Combining the convergence of the numerator (\ref{numerator}) and denominator (\ref{denominator}), we have for any fixed $R > 1$:
$$
\lim_{\varepsilon \to 0} \frac{\displaystyle \int_{\mathbb{S}^n} \langle |\slashiii{D}|\psi_{\varepsilon,R}, \psi_{\varepsilon,R} \rangle \, \text{d}v}{\displaystyle \left( \int_{\mathbb{S}^n} |\psi_{\varepsilon,R}|^{\frac{2n}{n-1}} \text{d}v \right)^{\frac{n-1}{n}}} = \frac{\displaystyle \int_{\mathbb{R}^n} U_R (-\Delta)^{1/2} U_R \, \text{d}y}{\displaystyle \left( \int_{\mathbb{R}^n} U_R^{\frac{2n}{n-1}} \text{d}y \right)^{\frac{n-1}{n}}}.
$$
Letting $R \to \infty$, the right-hand side approaches the Euclidean sharp quotient $\frac{n-1}{2} \omega_n^{1/n}$. 
Choose \(R_k\to\infty\). For each \(k\), choose \(\varepsilon_k>0\) sufficiently
small so that the quotient of \(\psi_{\varepsilon_k,R_k}\) differs from the
corresponding Euclidean quotient of \(U_{R_k}\) by at most \(1/k\). Then the
resulting sequence \(\psi_k:=\psi_{\varepsilon_k,R_k}\) satisfies
\[
\mathop {\lim }\limits_{k \to \infty} \frac{{\int_{{\mathbb{S}^n}} {\left\langle {\left|\slashiii{D} \right|{\psi _k},{\psi _k}} \right\rangle } {\mkern 1mu} {\text{d}}v}}{{{{\left( {{{\int_{{\mathbb{S}^n}} {\left| {{\psi _k}} \right|} }^{\frac{{2n}}{{n - 1}}}}{\text{d}}v} \right)}^{\frac{{n - 1}}{n}}}}}=
\frac{n-1}{2}\omega_n^{1/n}.
\]
Therefore
$$
C_{n,|D|} \le \frac{n-1}{2} \omega_n^{1/n}.
$$
The matching upper and lower bounds yield the desired conclusion.\qed

\begin{theorem}
	Let $n \ge 2$. The infimum $C_{n,|D|}$ is not attained by any non-zero spinor $\psi \in H^{1/2}(\Sigma \mathbb{S}^n)$.
\end{theorem}

\begin{proof}
	Suppose, for the sake of contradiction, that there exists a minimizer $\psi_0 \in H^{1/2}(\Sigma \mathbb{S}^n) \setminus \{0\}$. Let $p = \frac{2n}{n-1}$ and 
	\[
	B = \left(-\Delta + \frac{n(n-1)}{4}\right)^{1/2}.
	\]
	Repeating the semigroup-domination argument in Lemma \ref{Dirac-Laplace} but without
	replacing the potential \(n(n-1)/4\) by \((n-1)^2/4\), Bochner subordination gives
	\[
	|e^{-s|\slashiii{D}|}\psi|
	\le
	e^{-sB}|\psi|.
	\]
	Using the same difference-quotient argument as above, we obtain
	\begin{equation}\label{eq:form_domination}
		\int_{{\mathbb{S}^n}} {\left\langle {|\slashiii{D}|\psi ,\psi } \right\rangle } {\mkern 1mu} {\text{d}}v \geq \int_{{\mathbb{S}^n}} {\left\langle {B\left| \psi  \right|,\left| \psi  \right|} \right\rangle } {\mkern 1mu} {\text{d}}v.
	\end{equation}
	
	Next, we establish a strict spectral comparison between $B$ and $P_1$. Let \(f\in H^{1/2}(\mathbb S^n)\setminus\{0\}\). Interpreting both sides as
	closed quadratic forms, write \(f=\sum_{\ell=0}^\infty f_\ell\) according to its orthogonal decomposition into spherical harmonics of degree $\ell$. The spectral mappings for $P_1$ and $B$ yield
	\[
	P_1 f_\ell = \left(\ell + \frac{n-1}{2}\right) f_\ell, \quad \text{and} \quad B f_\ell = \left[ \left(\ell + \frac{n-1}{2}\right)^2 + \frac{n-1}{4} \right]^{1/2} f_\ell.
	\]
	Since $n \ge 2$, the inequality $\frac{n-1}{4} > 0$ holds strictly. Consequently, for every $\ell \in \mathbb{N}_0$, we have the strict eigenvalue inequality:
	\[
	\left[ \left(\ell + \frac{n-1}{2}\right)^2 + \frac{n-1}{4} \right]^{1/2} > \ell + \frac{n-1}{2}.
	\]
	Summing over the spectrum and invoking Parseval's identity, we deduce that for any non-zero $f \in H^{1/2}(\mathbb{S}^n)$, the quadratic forms satisfy the strict global inequality:
	\begin{equation}\label{eq:strict_operator_comparison}
		\int_{{\mathbb{S}^n}} {\left\langle {Bf,f} \right\rangle } {\mkern 1mu} {\text{d}}v > \int_{{\mathbb{S}^n}} {\left\langle {P_1 f,f} \right\rangle } {\mkern 1mu} {\text{d}}v.
	\end{equation}

	We now return to the hypothetical minimizer $\psi_0$. We normalize \(\psi_0\) so that
	\[
	\left(\int_{\mathbb S^n}|\psi_0|^p\,dv\right)^{2/p}
	=\|\psi_0\|_{L^p}^2
	=1,
	\] 
	and set $f_0 = |\psi_0| \in H^{1/2}(\mathbb{S}^n)$, which is clearly non-zero. Since $\psi_0$ achieves the sharp geometric constant $C_{n,|D|}$, its spinorial energy is locked at
	\begin{equation}\label{eq:attainment_value}
		\int_{{\mathbb{S}^n}} {\left\langle {|\slashiii{D}|{\psi _0},{\psi _0}} \right\rangle } {\mkern 1mu} {\text{d}}v= C_{n,|D|} = \frac{n-1}{2}\omega_n^{1/n}.
	\end{equation}
	On the other hand, concatenating the form domination \eqref{eq:form_domination} and the strict spectral comparison \eqref{eq:strict_operator_comparison} applied to $f_0$, we obtain:
	\begin{equation}\label{eq:contradiction_chain}
		\int_{{\mathbb{S}^n}} {\left\langle {|\slashiii{D}|{\psi _0},{\psi _0}} \right\rangle } {\mkern 1mu} {\text{d}}v \geq \int_{{\mathbb{S}^n}} {\left\langle {B{f_0},{f_0}} \right\rangle } {\mkern 1mu} {\text{d}}v > \int_{{\mathbb{S}^n}} {\left\langle {{P_1}{f_0},{f_0}} \right\rangle } {\mkern 1mu} {\text{d}}v.
	\end{equation}
	By virtue of the sharp scalar Beckner Sobolev inequality on the conformal sphere (see Beckner \cite{MR1230930}), the energy under $P_1$ is bounded from below by:
	\[
	\int_{{\mathbb{S}^n}} {\left\langle {{P_1}{f_0},{f_0}} \right\rangle } {\mkern 1mu} {\text{d}}v \ge \frac{n-1}{2}\omega_n^{1/n} \|f_0\|_{L^{p}(\mathbb{S}^n)}^2 = \frac{n-1}{2}\omega_n^{1/n}.
	\]
	Combining this with \eqref{eq:contradiction_chain} gives
	\[
	\int_{\mathbb S^n}\langle |\slashiii{D}|\psi_0,\psi_0\rangle\,dv
	>
	\frac{n-1}{2}\omega_n^{1/n},
	\]
	contradicting \eqref{eq:attainment_value}.
	
	Thus, the infimum $C_{n,|D|}$ cannot be attained by any non-zero element in $H^{1/2}(\Sigma \mathbb{S}^n)$, completing the proof.
\end{proof}

\small
\bibliographystyle{abbrv}
\bibliography{Sphere}
\end{document}